\numberwithin{equation}{section}
\newtheorem{theorem}{Theorem}[section]
\newtheorem{thmletter}{Theorem}
\newtheorem{proposition}[theorem]{Proposition}
\newtheorem{lemma}[theorem]{Lemma}
\newtheorem{corollary}[theorem]{Corollary}
\theoremstyle{definition}
\newtheorem{definition}[theorem]{Definition}
\newtheorem{remark}[theorem]{Remark}
\date{}
\newcommand\blfootnote[1]{%
	\begingroup
	\renewcommand\thefootnote{}\footnote{#1}%
	\addtocounter{footnote}{-1}%
	\endgroup
}
\def\N{\mathbb N}
\def\R{\mathbb R}
\def\C{\mathbb C}
\def\Z{\mathbb Z}
\title{Boas' problem on Hankel transforms}
\author{A. Debernardi\footnote{E-mail: \texttt{adebernardipinos@gmail.com}} \\ 
	\small{Centre de Recerca Matem\`atica, 08193, Bellaterra, Barcelona, Spain,}\\ \small{and Bar-Ilan University, 52900, Ramat-Gan, Israel}}
\begin{document}
	
\maketitle
\begin{flushleft}
	\small{AMS 2010 Primary subject classification: 42A38, 26D15. Secondary: 26A48.\\
		{\bf Keywords}: Boas' conjecture, Hankel transform, general monotonicity, weighted Lebesgue spaces, Lorentz spaces}
\end{flushleft}

\section*{Abstract}
Norm equivalences between a function and its Hankel transform are studied both in the context of weighted Lebesgue spaces with power weights, and in Lorentz spaces. Boas'-type results involving real-valued general monotone functions are obtained. Corresponding results for the Fourier transform are also given.

\blfootnote{\textbf{Acknowledgements}: The author acknowledges the remarks of the anonymous referee that contributed to significantly improve the manuscript.}

\blfootnote{This research was partially funded by the CERCA Programme of the Generalitat de Catalunya, Centre de Recerca Matem\`atica, the grant  MTM2017--87409--P from the Spanish Ministerio de Economía, Industria y Competitividad, the ERC starting grant No. 713927, and the ISF grant No. 447/16.}

\section{Introduction}
Given a $2\pi$-periodic function $f$ with Fourier series
\[
f(x)\sim \frac{a_0}{2}+\sum_{n=1}^\infty a_n \cos nx+b_n\sin nx,
\]
a classical problem is to study relations between the integrability of $f$  and the summability of its Fourier coefficients $\{a_n\}$, $\{b_n\}$. One of the most celebrated results in this direction is the Hardy-Littlewood theorem \cite{HaLi-Lp}, which states that for $1<q<\infty$ there exist a constant $C_q$ such that
\begin{equation}
\label{EQhl-charact}
C_q^{-1}\bigg(\int_0^{2\pi} |f(x)|^q\, dx\bigg)^{1/q}\leq \bigg(|a_0|^{q}+\sum_{n=1}^\infty n^{q-2}(|a_n|^{q}+|b_n|^{q})\bigg)^{1/q}\leq  C_q\bigg(\int_0^{2\pi} |f(x)|^q\, dx\bigg)^{1/q}.
\end{equation}

This representation of $L^q$ norms of functions via the weighted $\ell^q$ norms of their Fourier coefficients is useful for applications in other problems (cf. \cite{GT,NursultanovNetspaces,sagherQM} and the references therein). Thus, two interesting problems are to study what kind of weights may be incorporated in \eqref{EQhl-charact} and what generalizations of monotone sequences may be considered in such a way that relation \eqref{EQhl-charact} still holds. It is worth mentioning that such a relation with Lorentz norms instead of Lebesgue norms has also been object of study, although we omit such results for the sake of simplicity. 

Extensions of the equivalence \eqref{EQhl-charact} have been given for more general weights in \cite{sagherboas}, and the monotonicity condition was replaced by \textit{general monotonicity} in \cite{askhat,studia,TikGM,YZZ}, among several other works. It is worth to mention that a general monotone sequence need not be nonnegative (although it is a typical assumption in this kind of problem, in order to show the left-hand side inequality of \eqref{EQhl-charact}). Thus, one may wonder if \eqref{EQhl-charact} also holds when the nonnegativity assumption is replaced by a milder one. The answer is positive if we consider $\{a_n\}$ and $\{b_n\}$ to be real-valued and to satisfy the general monotonicity condition
\begin{equation}
\label{EQ-gmseq}
\sum_{k=n}^{2n}|c_k-c_{k+1}|\leq C\sum_{k=n/\lambda}^{\lambda n}\frac{|c_k|}{k}, \qquad c_k=a_k,\, b_k,
\end{equation}
for all $n$, where $C,\lambda>1$ are absolute constants. More precisely, in \cite{askhat}, the authors proved that for real-valued sequences $\{a_n\}$ and $\{b_n\}$ satisfying \eqref{EQ-gmseq}, the equivalence
\[
\bigg(|a_0|^q+\sum_{n=1}^\infty n^{q/p'-1}(|a_n|^q+|b_n|^q)\bigg)^{1/q}\asymp \bigg(\int_0^{2\pi} x^{q/p-1}|f(x)|^q\, dx\bigg)^{1/q}, \quad 1< p< \infty, \quad 1\leq q\leq \infty,
\]
holds, with the usual modification for $q=\infty$. Here and in what follows the symbol $\asymp$ is defined as follows: if  $A\leq CB$, where $C$ is independent of essential quantities of $A$ and $B$, we write $A \lesssim B$. Likewise, $A\gtrsim B$ will denote $A\geq CB$, and if $A\lesssim B$ and $A\gtrsim B$ simultaneously, we write $A \asymp B$.  

A converse equivalence for Lorentz norms was also obtained in \cite{askhat}, i.e.,
\[
\bigg((a_0^*)^q+\sum_{n=1}^\infty n^{q/p'-1}((a_n^*)^q+(b_n^*)^q)\bigg)^{1/q}\asymp \bigg(\int_0^{2\pi} x^{q/p-1}f^*(x)^q\, dx\bigg)^{1/q}, \quad 1< p< \infty, \quad 1\leq q\leq \infty,
\]
where $f^*$ denotes the decreasing rearrangement of $f$ (defined below), and $\{a^*_n\}$ and $\{b_n^*\}$ are the decreasing rearrangements of $\{a_n\}$ and $\{b_n\}$ respectively, or in other words, the sequences $\{|a_n|\}$ and $\{|b_n|\}$ rearranged in decreasing order. 

Before discussing the analogous inequalities to those presented above for Fourier transforms instead of Fourier series, let us introduce the basic notions we will use. All functions considered in this paper will be defined on an interval of $\R$ (mostly on $\R_+:=(0,\infty)$) and Lebesgue measurable. Non-weight functions are also assumed to be locally integrable on their interval of definition.

For $0<q\leq \infty$ and a weight $w:\R_+\to \R_+$, the weighted Lebesgue space $L^q(w)$ is defined as the set of all complex-valued measurable functions $f$ for which the functional 
\[
\Vert f\Vert_{L^q(w)}:=\begin{dcases}
\bigg( \int_0^\infty  w(x)|f(x)|^q\, dx\bigg)^{1/q},& \text{if }0<q<\infty,\\
\sup_{x\in (0,\infty)} w(x)|f(x)|, & \text{if } q=\infty,
\end{dcases}
\]
is finite. A particular example of weighted Lebesgue space that plays a significant role in this paper is the space $L^q(w)$ with $w(x)=x^{q/p-1}$ and $0<p\leq\infty$ (where in the case $p=\infty$ we take the convention $1/p\equiv 0$ and in the case $q=\infty$, we set $w(x)=x^{1/p}$). Following Sagher \cite{sagherboas}, such a space will be denoted by $L^q_{t(p,q)}$, and obviously $L^q_{t(q,q)}=L^q$. We may also refer to Lebesgue spaces for functions defined on $\R$; in this case the integration is obviously performed on $\R$, and if the corresponding functional is finite we write that $f\in L^q_\R(w)$.

We also define the Lorentz spaces $L^{p,q}$, introduced in \cite{Lorentzspaces} (see also \cite{BSbook}). To this end, recall that for a function $f$ defined on an interval $(a,b)\subset \R$, the distribution function of $f$ (with respect to the Lebesgue measure) is
\[
d_f(s)=|\{x\in(a,b):|f(x)|>s \}|,\qquad s\geq 0,
\]
where $|E|$ denotes the Lebesgue measure of a set $E$. The decreasing rearrangement of $f$ is then defined as
\[
f^*(x)=\inf \{s>0:d_f(s)\leq x\}, \qquad x\geq 0.
\]
The Lorentz space $L^{p,q}$, with $0<p,q\leq \infty$, is the set of all complex-valued functions $f$ defined on $(0,\infty)$ for which the functional
\[
\Vert f\Vert_{L^{p,q}}:=\begin{dcases}\bigg(\int_0^\infty \big(x^{1/p}f^*(x) \big)^{q}\frac{dx}{x}\bigg)^{1/q}, &\text{if } 0<p,q<\infty,\\
\sup_{x\in (0,\infty)} x^{1/p}f^*(x), &\text{if }0<p \leq \infty \text{ and }q=\infty,
\end{dcases}
\]
is finite. We will denote the corresponding Lorentz space of functions defined on $\R$ as $L^{p,q}_\R$. It is well known that for any $0<p\leq \infty$, $L^{p,p}=L^p$, and for any $0<q<r\leq \infty$, $L^{p,q}$ is a subspace of $L^{p,r}$ \cite{Grafakosbook}, or in other words, there exists a constant $C_{p,q,r}$ such that for every $f\in L^{p,q}$,
\[
\Vert f\Vert_{L^{p,r}}\leq C_{p,q,r}\Vert f\Vert_{L^{p,q}}.
\]
Note that if we restrict ourselves to considering only decreasing functions, the spaces $L^q_{t(p,q)}$ and $L^{p,q}$ coincide. Another useful expression for the Lorentz norm is \cite{Grafakosbook}
\begin{equation}
\label{EQlorentz-norm-alternative}
\Vert f\Vert_{L^{p,q}}=\begin{dcases}
p^{1/q}\bigg(\int_0^\infty \big(s d_f(s)^{1/p}\big)^q\frac{ds}{s}\bigg)^{1/q}, &\text{if }0<p,q<\infty,\\
\sup_{s\in(0,\infty)}sd_f(s)^{1/p},&\text{if }q=\infty.
\end{dcases}
\end{equation}

As one may expect, the equivalence \eqref{EQhl-charact} has its analog in the case of Fourier transforms, whose one-dimensional version reads as
\begin{equation}
\label{EQhali-functions}
\bigg(\int_{\R} |x|^{q-2} |\widehat{f}(x)|^q\, dx\bigg)^{1/q} \asymp \bigg(\int_{\R}|f(x)|^q\, dx\bigg)^{1/q}, \qquad 1<q<\infty,
\end{equation}
for any even function $f$ nonincreasing on $(0,\infty)$ (cf. \cite[Ch. IV]{titchmarsh}). What is more, Boas conjectured \cite{boas} that a similar relation to \eqref{EQhali-functions} with weights should be satisfied for sine and cosine transforms. More precisely, the conjecture is as follows. Let $G(x)=\int_0^\infty  g(t) \varphi(xt)\, dt$, where $\varphi(s)$ is either $\sin s$ or $\cos s$. If $g$ is nonnegative and nonincreasing and $-1/q'<\gamma<1/q$, then
\begin{equation*}
\label{EQboasconjecture}
x^{\gamma+1-2/q}G(x) \in L^q \qquad \text{if and only if} \qquad t^{-\gamma}g(t)\in L^q, \qquad 1<q<\infty.
\end{equation*}

An extended version of this conjecture was proved by Sagher in \cite{sagherboas}, where he also considered Lorentz spaces $L^{p,q}$ in place of the Lebesgue spaces $L^q$. Recent developments on general monotone functions (whose definition is analogous to \eqref{EQ-gmseq}, cf. Section~\ref{SSGM}) gave rise to further generalizations of Boas' conjecture, see \cite{LTnach,TikSzeged,TikGM}. In particular, Boas' problem was studied for the one-dimensional Fourier transform \cite{LTparis} (see also \cite{LTannali}), the multidimensional Fourier transform of radial functions \cite{GLT}, and for Hankel transforms \cite{DCGT}. In these works the involved functions were assumed to be nonnegative. 

A complex-valued function defined on $\R_+$ and locally of bounded variation $f$ is said to be \textit{general monotone} ($f\in GM$) \cite{LTnach} if there exist constants $C>0$ and $\lambda>1$ (depending on $f$) such that
\begin{equation}
\label{EQgmdefinition}
\int_{x}^{2x}|df(t)|\leq C\int_{x/\lambda}^{\lambda x} \frac{|f(t)|}{t}\, dt, \qquad \text{for all }x>0.
\end{equation}

Our main goal is to prove a version of Boas' conjecture for Hankel transforms of general monotone functions with the assumption $g\geq 0$ replaced by $g$ real-valued, from which all previous results, such as Hardy-Littlewood theorem, can be derived. We also give corresponding integrability theorems on Lorentz spaces. We emphasize that results in this direction were obtained  very recently  for Fourier series in the paper \cite{askhat}.

For $\alpha\geq-1/2$, the Hankel transform of a function $f\in  L^1(0,\infty)$ (see \cite[Ch. VIII]{titchmarsh} and \cite[Ch. IV]{SWfourier}) is defined as
\begin{equation}\label{EQhankeltr}
H_\alpha f(y)=\int_0^\infty f(x)\sqrt{xy}J_\alpha(xy)\, dx, \qquad y\in \R_+,
\end{equation}
where $J_\alpha$ denotes the Bessel function of order $\alpha$ (cf. Subsection~\ref{SSbessel}). It is well known that Hankel transforms describe the Fourier transforms of radial functions defined on $\R^n$. More precisely, if $f\in L^1(\R^n)$ and $f(x)=f_0(|x|)$, its Fourier transform is also radial, and moreover
\begin{equation}
\label{EQ-fourier-radial-functions}
|y|^{\frac{n-1}{2}} \widehat{f}(y)=|y|^{\frac{n-1}{2}} \int_{\R^n}f(x) e^{i(x,y)}\, dx=c_n  H_{\frac{n}{2}-1}\big[s^{\frac{n-1}{2}}f_0(s) \big](|y|),
\end{equation}
see \cite[Ch. IV, Theorem 3.3]{SWfourier}. 
Furthermore, since the Fourier transform in one dimension can be written as a sum of two Hankel transforms (see Subsection~\ref{SSbessel} below), obtaining Boas-type results for Hankel transforms  allows to derive the corresponding theorems for the Fourier transform.

In what follows we consider the Hankel transform of $f$ to be the pointwise limit
\begin{equation}
\label{EQlimit}
H_\alpha f(y)=\lim_{\substack{M\to 0\\N\to \infty}}\int_M^N f(x)\sqrt{xy}J_\alpha (xy)\, dx.
\end{equation}

Our main results read as follows.
\begin{theorem}
	\label{THMmainlebesgue}
	Let $f\in GM$ be real-valued. For $1\leq q\leq \infty$ and $\dfrac{1}{\alpha+3/2}<p<\infty$, one has
	\begin{equation*}
	\label{EQboaslebesgue}
	\Vert x^{1/p'-1/q} H_\alpha f\Vert_{L^q}\asymp \Vert x^{1/p-1/q} f\Vert_{L^q},
	\end{equation*}
	or in other words, $f\in L^q_{t(p,q)}$ if and only if $H_\alpha f\in L^q_{t(p',q)}$.
\end{theorem}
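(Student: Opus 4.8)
The plan is to split the equivalence into its two halves and reduce each to known one-directional estimates for Hankel transforms of nonnegative general monotone functions, exploiting the decomposition of a real-valued $GM$ function into its positive and negative parts together with the key structural feature of $GM$ functions: general monotonicity controls the \emph{total variation} on dyadic blocks by an average of $|f|$, so the "oscillation" of $f$ is small compared to its size. First I would establish the elementary reductions. Writing $f = f_+ - f_-$ does not immediately help since $f_\pm$ need not be $GM$; instead I would use the standard fact (as in the treatment for Fourier series in \cite{askhat}) that a real-valued $GM$ function can be written, up to a rapidly decaying error controlled in the relevant norms, as a difference $f = g - h$ of \emph{nonnegative} $GM$ functions, or alternatively that on each dyadic interval $[2^k,2^{k+1}]$ the sign of $f$ is essentially constant away from a controlled exceptional set, with $|f|$ itself behaving like a nonnegative $GM$ function in the scale of the spaces $L^q_{t(p,q)}$. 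The nonnegativity hypothesis in the results of \cite{DCGT} is used precisely to obtain the lower bound $\Vert x^{1/p'-1/q}H_\alpha f\Vert_{L^q}\gtrsim \Vert x^{1/p-1/q}f\Vert_{L^q}$; the upper bound there holds for all $GM$ functions regardless of sign.

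The core of the argument is therefore the lower bound. For the nonnegative $GM$ case one has pointwise lower bounds for $H_\alpha f(y)$ in terms of a local average of $f$ near $y^{-1}$: using the oscillation of $\sqrt{xy}J_\alpha(xy)$ and the fact that $f$ does not oscillate much on dyadic blocks, one shows $H_\alpha f(y)\gtrsim y^{-1}\int_{c/y}^{C/y} f$ (or a comparable quantity) for a suitable range of $y$, and then the $GM$ condition upgrades this to $H_\alpha f(y)\gtrsim f(1/y)\cdot(\text{power of }y)$ on a large set; integrating recovers the $L^q_{t(p,q)}$ norm of $f$. For the \emph{real-valued} case, the point is that on a dyadic block where $f$ has a definite sign the same pointwise bound applies to $|f|$, while the contribution of the blocks where $f$ changes sign is, by general monotonicity, negligible relative to $\Vert x^{1/p-1/q}|f|\Vert_{L^q}$. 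One then sums/integrates over dyadic blocks, using that $\Vert x^{1/p-1/q}f\Vert_{L^q}\asymp \big(\sum_k \big(2^{k(1/p)}\, 2^{-k/q}\, \Vert f\Vert_{L^q[2^k,2^{k+1}]}\big)^q\big)^{1/q}$ by a routine dyadic decomposition, and similarly for the transform side; the restriction $p>1/(\alpha+3/2)$ enters exactly through the decay/growth rate of $J_\alpha$ at the origin, which is needed for the defining limit \eqref{EQlimit} to exist and for the relevant weighted estimates to converge. The upper bound $\Vert x^{1/p'-1/q}H_\alpha f\Vert_{L^q}\lesssim \Vert x^{1/p-1/q}f\Vert_{L^q}$ I would deduce directly from \cite{DCGT} (it requires no sign assumption) or re-prove by the same dyadic splitting combined with the asymptotics of $J_\alpha$ for large and small argument and a Hardy-type inequality.

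The main obstacle I anticipate is making rigorous the claim that the sign changes of $f$ contribute negligibly: one must quantify, using only \eqref{EQgmdefinition}, how much of the mass $\int_{2^k}^{2^{k+1}}|f|$ can be "cancelled" in the Hankel transform by sign oscillations, and show this cancelled part is dominated by $C\int_{2^{k}/\lambda}^{\lambda 2^{k}}|f(t)|/t\,dt$ summed appropriately — essentially an interpolation between the trivial bound and the gain coming from the controlled variation. A secondary technical point is handling the endpoint cases $q=1$ and $q=\infty$ (and $p=\infty$) uniformly; there the dyadic decomposition of the norm and the pointwise lower bound still work, but the summation step must be phrased with suprema rather than sums. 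Finally, one should check that the self-inverse property $H_\alpha(H_\alpha f)=f$ and the mapping properties guaranteeing $H_\alpha f$ is again amenable to this analysis hold under the stated hypotheses, so that both directions of the "if and only if" are genuinely symmetric in $p\leftrightarrow p'$.
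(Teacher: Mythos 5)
There is a genuine gap at the central point of the theorem: the lower bound $\Vert f\Vert_{L^q_{t(p,q)}}\lesssim \Vert H_\alpha f\Vert_{L^q_{t(p',q)}}$ for real-valued, possibly sign-changing $f\in GM$. Your plan rests on two unproven claims: (i) that a real-valued $GM$ function can be written, up to a norm-negligible error, as a difference of two \emph{nonnegative} $GM$ functions, and (ii) that the dyadic blocks on which $f$ changes sign contribute negligibly, the cancellation being controlled by \eqref{EQgmdefinition}. Neither is substantiated, and (ii) is precisely the crux of the whole problem: \eqref{EQgmdefinition} by itself does not make sign-changing blocks negligible; what is actually true, and what must be proved quantitatively, is that on suitable (``good'') blocks there is a subinterval of proportional measure on which $f$ keeps a fixed sign and $|f|$ is comparable to its supremum over the block, while the remaining (``bad'') blocks are dominated by good ones. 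The paper devotes all of Section~\ref{SSGM} to exactly this: the good/bad number dichotomy, Lemma~\ref{LEMgood1-functions}, the sign lemma (Lemma~\ref{LEMgood2-functions}), Lemma~\ref{LEMseqbadnrs} and Remark~\ref{REMseqgoodnrs} for the bad blocks, culminating in Theorem~\ref{THMaverageoperator}, $\Vert g\Vert_{L^q(w)}\lesssim \Vert M\Phi_g\Vert_{L^q(w)}$ for real-valued $GM$ functions. None of this machinery (or a substitute for it) appears in your sketch; you flag it yourself as ``the main obstacle'' without resolving it, so the hard half of the equivalence remains unproved.

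The route you propose for that half is also different in kind from the one the paper uses, and is shakier: you want a pointwise lower bound $H_\alpha f(y)\gtrsim y^{-1}\int_{c/y}^{C/y}f$ on a large set of $y$ and then to integrate, whereas the paper avoids pointwise lower bounds on $H_\alpha f$ altogether. It applies Lemma~\ref{LEMbddnesslebesgue} (a soft bound: H\"older plus Stein--Weiss interpolation with change of measures) with $H_\alpha f$ in place of $f$, using the distributional inversion $H_\alpha H_\alpha f=f$ justified in Section~\ref{SECdefinitenessHa} (a well-definedness issue your sketch mentions only in passing), to obtain $\Vert M\Phi_f\Vert_{L^q_{t(p,q)}}\lesssim \Vert H_\alpha f\Vert_{L^q_{t(p',q)}}$, and then invokes Theorem~\ref{THMaverageoperator} with a suitable $\varphi\in\mathcal{H}_\alpha$ supported on $(0,1+\varepsilon/2)$. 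Your treatment of the upper bound (reduce to \cite{DCGT}, or reprove via \eqref{EQbesselestimate}, integration by parts with \eqref{EQprimitiveest}, and Hardy's inequalities, which is how Theorem~\ref{THMpitt1infty} extends it to $q=1,\infty$) is fine in outline, but without a rigorous lower bound for sign-changing $GM$ functions the stated equivalence is not established.
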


\begin{theorem}
	\label{THMmainlorentz}
	Let $f\in GM$ be real-valued. For $1<p<\infty$ and $1\leq q\leq \infty$, one has
	\begin{equation*}
	\label{EQboaslorentz}
	\Vert H_\alpha f\Vert_{L^{p',q}}\asymp \Vert f\Vert_{L^{p,q}},
	\end{equation*}
	or in other words, $f\in L^{p,q}$ if and only if $H_\alpha f\in L^{p',q}$.
\end{theorem}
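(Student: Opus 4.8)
The plan is to derive Theorem~\ref{THMmainlorentz} from the weighted Lebesgue result in Theorem~\ref{THMmainlebesgue}, exploiting the fact that for decreasing functions the spaces $L^{q}_{t(p,q)}$ and $L^{p,q}$ coincide, together with the $GM$ structure of $f$. First I would observe that if $f\in GM$ is real-valued, then $|f|$ is, up to constants, controlled pointwise by its ``envelope'' and, more importantly, a $GM$ function behaves on each dyadic block essentially like a decreasing function: roughly $|f(x)|\lesssim \frac1x\int_{x/\lambda}^{\lambda x}|f|$ and the reversed averaged bound, so that the decreasing rearrangement $f^{*}$ is comparable to the rearrangement of a suitable decreasing majorant/minorant built from $f$. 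Hence $\Vert f\Vert_{L^{p,q}}\asymp \Vert x^{1/p-1/q} f\Vert_{L^{q}}$ for $f\in GM$; this is the key reduction, and I would state and prove it as a lemma (it is the $GM$ analogue of the fact that for monotone functions the two norms agree).

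Next I would establish the analogous statement for the Hankel transform side: that $H_\alpha f$, for $f\in GM$, also has a norm equivalence $\Vert H_\alpha f\Vert_{L^{p',q}}\asymp \Vert x^{1/p'-1/q}H_\alpha f\Vert_{L^{q}}$. Here one needs structural information about $H_\alpha f$ when $f\in GM$ — namely that $H_\alpha f$ is itself, away from the origin and near infinity, comparable to a ``nice'' (quasi-monotone, or at least with controlled oscillation) function, so that its rearrangement can again be read off from its size function. This should follow from the asymptotics of $J_\alpha$ and the standard $GM$ estimates for the Hankel transform used in the proof of Theorem~\ref{THMmainlebesgue} (pointwise bounds on $H_\alpha f(y)$ in terms of averages of $f$ near $1/y$). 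Once both sides are re-expressed as weighted $L^{q}$ norms, the equivalence $\Vert x^{1/p'-1/q}H_\alpha f\Vert_{L^{q}}\asymp \Vert x^{1/p-1/q}f\Vert_{L^{q}}$ is exactly Theorem~\ref{THMmainlebesgue}, valid since $1<p<\infty$ forces $p>\tfrac1{\alpha+3/2}$ for all $\alpha\geq -1/2$, so the hypothesis there is automatically met.

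For the case $q=\infty$ I would argue directly using the alternative formula \eqref{EQlorentz-norm-alternative} for the Lorentz norm in terms of the distribution function, combined with the $L^\infty_{t(p,\infty)}$ statement of Theorem~\ref{THMmainlebesgue}: the condition $\sup_x x^{1/p}f^*(x)<\infty$ is equivalent, for $GM$ functions, to $\sup_x x^{1/p}|f(x)|<\infty$, and similarly on the transform side with $p'$, so the weak-type endpoint again reduces to the already-proven Lebesgue statement. The interpolation-type inclusion $L^{p,q}\subset L^{p,r}$ for $q<r$ quoted in the introduction can be used to handle any remaining bookkeeping between the $q<\infty$ and $q=\infty$ regimes, though in fact the direct argument via decreasing rearrangements covers all $1\leq q\leq\infty$ uniformly.

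The main obstacle I anticipate is the first lemma: proving cleanly that for a \emph{real-valued} $GM$ function the Lorentz norm of $f$ and the weighted Lebesgue norm $\Vert x^{1/p-1/q}f\Vert_{L^q}$ are comparable. The subtlety is precisely the one flagged in the introduction — $GM$ functions need not be nonnegative, so $f^{*}$ can a priori be much smaller than one would guess from $|f|$ (cancellation in the level sets), and one must use the $GM$ inequality \eqref{EQgmdefinition} to show that $|f|$ cannot oscillate wildly within a dyadic interval, hence that the measure of $\{|f|>s\}$ is comparable to what a monotone majorant would give. Handling the constants $\lambda>1$ in the overlapping ranges of integration, and making sure the comparison is two-sided (both $f^*\lesssim$ monotone envelope and $\gtrsim$), is where the real work lies; once that lemma is in hand, the rest is assembling Theorem~\ref{THMmainlebesgue} with the coincidence of the two scales of spaces on decreasing functions.
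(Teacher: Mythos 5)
The $f$-side of your reduction is fine: the equivalence $\Vert f\Vert_{L^{p,q}}\asymp\Vert x^{1/p-1/q}f\Vert_{L^q}$ for $GM$ functions is exactly Theorem~\ref{THMbooton} (Booton), so it does not need to be reproved. The genuine gap is the transform side. Your plan needs $\Vert H_\alpha f\Vert_{L^{p',q}}\asymp\Vert x^{1/p'-1/q}H_\alpha f\Vert_{L^q}$, and you justify it by asserting that $H_\alpha f$ is quasi-monotone (has controlled oscillation) because of the pointwise estimates behind Theorem~\ref{THMmainlebesgue}. But those estimates, coming from \eqref{EQbesselestimate}, integration by parts and \eqref{EQprimitiveest}, are only \emph{upper} bounds for $|H_\alpha f(y)|$ by monotone majorants built from averages of $|f|$; they give no lower bound on $|H_\alpha f|$ on a dyadic block, hence no control of its distribution function from below, which is precisely what a two-sided comparison of the Lorentz and weighted Lebesgue norms requires. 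For a general measurable $g$ the two norms are not comparable: with $g=\chi_{(N,N+1)}$ one has $\Vert g\Vert_{L^{p',q}}\asymp 1$ while $\Vert x^{1/p'-1/q}g\Vert_{L^q}^q\asymp N^{q/p'-1}$, so one of the two inequalities fails as $N\to\infty$ whenever $q\neq p'$. Moreover $H_\alpha f$ is in general \emph{not} $GM$ even when $f$ is nonnegative and decreasing (e.g.\ $f=\chi_{(0,1)}$ has cosine transform $\sin y/y$, which violates \eqref{EQgmdefinition} for large $y$), so Theorem~\ref{THMbooton} cannot be applied to $H_\alpha f$, and no substitute structural statement follows from the ingredients you cite; the same unfounded transfer to the transform side occurs in your $q=\infty$ argument. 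Note also that the equivalence you want for $H_\alpha f$ appears in the paper only as Corollary~\ref{CORhankel-transforms}, i.e.\ as a \emph{consequence} of both Theorems~\ref{THMmainlebesgue} and~\ref{THMmainlorentz}, so taking it as an input to deduce Theorem~\ref{THMmainlorentz} from Theorem~\ref{THMmainlebesgue} would be circular.

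The paper's proof avoids any structural claim about $H_\alpha f$. The inequality $\Vert H_\alpha f\Vert_{L^{p',q}}\lesssim\Vert f\Vert_{L^{p,q}}$ is proved directly in Theorem~\ref{THMlorentz}, by dominating $(H_\alpha f)^*$ by a decreasing majorant obtained from the same pointwise bounds. For the reverse inequality the $GM$ hypothesis is used only on $f$: H\"older plus interpolation give $\Vert M\Phi_f\Vert_{L^{p,q}}\lesssim\Vert H_\alpha f\Vert_{L^{p',q}}$ (Lemma~\ref{LEMaverageoperator} applied with $H_\alpha f$ in place of $f$, which uses the inversion formula justified in Section~\ref{SECdefinitenessHa}), and the key Theorem~\ref{THMaverageoperator} --- the good/bad-number argument for real-valued $GM$ functions --- gives $\Vert f\Vert_{L^q_{t(p,q)}}\lesssim\Vert M\Phi_f\Vert_{L^q_{t(p,q)}}$; Theorem~\ref{THMbooton} then converts this to $\Vert f\Vert_{L^{p,q}}$. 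To repair your outline you would have to either prove the missing two-norm equivalence for $H_\alpha f$ independently (which is essentially as hard as the theorem itself) or replace that step by a maximal-averaging/duality argument of this kind.
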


Theorem~\ref{THMmainlebesgue} was proved for nonnegative $f$ and $1<q<\infty$ in \cite{DCGT} (see also \cite{GLT} for the case of Fourier transforms of radial functions, and the earlier \cite{LTparis,LTannali} for the sine and cosine transforms). 

It is worth mentioning that the inequality $\lesssim$ in Theorem~\ref{THMmainlebesgue} is a particular case of the well-known Pitt's inequality (see, e.g.,  \cite{BH,DC,GLTPitt,NT,Pitt}). Such kind of inequalities are often studied excluding the cases $q=1,\infty$.

With Theorem~\ref{THMmainlebesgue} in hand, we can easily derive the promised integrability results for the Fourier transform in one dimension, and also for Fourier transforms of radial functions in several dimensions. The corresponding Boas theorem for the Fourier transform in one dimension reads as follows (a version of this result was proved for nonnegative $GM$ functions in \cite{bootonlorentz}).
\begin{corollary}\label{COR-fourier}
	Let $f$ be a function defined on $\R$ and such that the even and odd parts of $f$,
	\[
	f_e(x)=\frac{f(x)+f(-x)}{2},\qquad f_o(x)=\frac{f(x)-f(-x)}{2},
	\]
	are real-valued $GM$ functions \textnormal{(}when restricted to $(0,\infty)$\textnormal{)}. Then, for $1<p,q<\infty$, $v(x)=|x|^{q/p-1}$, and $w(x)=|x|^{q/p'-1}$,
	\[
	f\in L^{q}_{\R}(v)\Leftrightarrow \widehat{f}\in L^{q}_\R(w)\Leftrightarrow f\in  L^{p,q}_{\R}\Leftrightarrow  \widehat{f}\in L^{p',q}_\R.
	\]
\end{corollary}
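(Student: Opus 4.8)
The plan is to reduce the statement to Theorems~\ref{THMmainlebesgue} and~\ref{THMmainlorentz} applied separately to the even and odd parts of $f$. Recall from Subsection~\ref{SSbessel} that $J_{-1/2}(t)=\sqrt{2/(\pi t)}\cos t$ and $J_{1/2}(t)=\sqrt{2/(\pi t)}\sin t$, so that $H_{-1/2}$ and $H_{1/2}$ are, up to the constant $\sqrt{2/\pi}$, the cosine and sine transforms on $(0,\infty)$. Since $f_e$ is even and $f_o$ is odd and both are real-valued, the Fourier transform splits as
\[
\widehat f(y)=\widehat{f_e}(y)+\widehat{f_o}(y)=\sqrt{2\pi}\,H_{-1/2}f_e(|y|)+i\sqrt{2\pi}\,\operatorname{sgn}(y)\,H_{1/2}f_o(|y|),
\]
so that $|\widehat f(y)|^2=2\pi\bigl(H_{-1/2}f_e(|y|)^2+H_{1/2}f_o(|y|)^2\bigr)$ and, in particular, $|\widehat f|\asymp|H_{-1/2}f_e|+|H_{1/2}f_o|$ on $(0,\infty)$ by the equivalence of the $\ell^1$ and $\ell^2$ norms on $\R^2$. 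The hypotheses of Theorems~\ref{THMmainlebesgue} and~\ref{THMmainlorentz} are met, since $f_e,f_o\in GM$, $1<q<\infty$, and the lower bound $1/(\alpha+3/2)$ equals $1$ for $\alpha=-1/2$ and $1/2$ for $\alpha=1/2$, both below $p$; thus the theorems apply to $f_e$ with $\alpha=-1/2$ and to $f_o$ with $\alpha=1/2$.

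The second step is the routine observation that each of the four quasinorms appearing in the corollary is comparable, with constants depending only on $p$ and $q$, to a sum of quasinorms over $(0,\infty)$ of the even and odd ingredients. For the weighted Lebesgue norms this follows from the evenness of $v$ and $w$, the identities $|f(\pm x)|=|f_e(x)\pm f_o(x)|$, the comparison $|a+b|^q+|a-b|^q\asymp|a|^q+|b|^q$ for real $a,b$ and $q\ge1$, and the fact that on $(0,\infty)$ these weights equal $x^{q/p-1}$ and $x^{q/p'-1}$; it gives
\[
\|f\|_{L^q_\R(v)}\asymp\|f_e\|_{L^q_{t(p,q)}}+\|f_o\|_{L^q_{t(p,q)}},\qquad \|\widehat f\|_{L^q_\R(w)}\asymp\|H_{-1/2}f_e\|_{L^q_{t(p',q)}}+\|H_{1/2}f_o\|_{L^q_{t(p',q)}}.
\]
For the Lorentz quasinorms I would pass to the distribution function: since $\max(|a+b|,|a-b|)=|a|+|b|$ for real $a,b$, the distribution function of $f$ on $\R$ is comparable to that of $|f_e|+|f_o|$ on $(0,\infty)$, and $d_{\widehat f}$ on $\R$ is comparable to the distribution function of $|H_{-1/2}f_e|+|H_{1/2}f_o|$ on $(0,\infty)$; then formula~\eqref{EQlorentz-norm-alternative}, the quasi-triangle inequality in $L^{p,q}$ and $L^{p',q}$ (valid since $1<p,p',q<\infty$), and the monotonicity of the quasinorm under the pointwise order yield
\[
\|f\|_{L^{p,q}_\R}\asymp\|f_e\|_{L^{p,q}}+\|f_o\|_{L^{p,q}},\qquad \|\widehat f\|_{L^{p',q}_\R}\asymp\|H_{-1/2}f_e\|_{L^{p',q}}+\|H_{1/2}f_o\|_{L^{p',q}}.
\]

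Combining these splittings with Theorem~\ref{THMmainlebesgue} (for $f_e$ with $\alpha=-1/2$ and $f_o$ with $\alpha=1/2$) gives $\|f\|_{L^q_\R(v)}\asymp\|\widehat f\|_{L^q_\R(w)}$, hence $f\in L^q_\R(v)\Leftrightarrow\widehat f\in L^q_\R(w)$; combining them with Theorem~\ref{THMmainlorentz} in the same way gives $\|f\|_{L^{p,q}_\R}\asymp\|\widehat f\|_{L^{p',q}_\R}$, hence $f\in L^{p,q}_\R\Leftrightarrow\widehat f\in L^{p',q}_\R$. What remains is to bridge these two pairs, i.e.\ to show $f\in L^q_\R(v)\Leftrightarrow f\in L^{p,q}_\R$; by the splittings above this amounts precisely to the comparison $\|g\|_{L^q_{t(p,q)}}\asymp\|g\|_{L^{p,q}}$ for real-valued $g\in GM$, used with $g=f_e$ and $g=f_o$. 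This comparison is the only point that genuinely uses general monotonicity, and I expect it to be the main obstacle: for decreasing $g$ it is an identity by the very definition of the two spaces, and for an arbitrary $g$ exactly one of the two inequalities holds for free (by a rearrangement argument, depending on whether the weight $x^{q/p-1}$ is increasing or decreasing), whereas the reverse inequality forces one to use that a $GM$ function cannot concentrate its mass on sets much thinner than the dyadic blocks on which it essentially lives, so that its decreasing rearrangement carries the same weighted $L^q$ mass. Granting this comparison, all four conditions are equivalent — indeed the four quasinorms in the statement are pairwise comparable — which proves the corollary.
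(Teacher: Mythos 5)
Your argument follows essentially the same route as the paper: split $f$ into even and odd parts, use the representation $\widehat f = H_{-1/2}f_e + i\,H_{1/2}f_o$, compare the norms over $\R$ of $f$ and $\widehat f$ with the norms over $(0,\infty)$ of $f_e,f_o$ and of $H_{-1/2}f_e,H_{1/2}f_o$ (your splitting computations, both for the power-weighted Lebesgue norms and for the Lorentz norms via distribution functions, are exactly the content of the paper's Lemmas~\ref{LEMMA-lebesgue-decomp} and~\ref{LEMMA-lorentz-decomp}), and then apply Theorems~\ref{THMmainlebesgue} and~\ref{THMmainlorentz} to $f_e$ with $\alpha=-1/2$ and to $f_o$ with $\alpha=1/2$, whose hypotheses are indeed met since $1/(\alpha+3/2)\leq 1<p$. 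Up to this point the argument is correct and matches the paper's.

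The one place where you stop short is the bridge $f\in L^q_\R(v)\Leftrightarrow f\in L^{p,q}_\R$, which you correctly reduce to the equivalence $\Vert g\Vert_{L^q_{t(p,q)}}\asymp\Vert g\Vert_{L^{p,q}}$ for real-valued $g\in GM$, but then flag as ``the main obstacle'' and support only with a heuristic about $GM$ functions not concentrating mass on thin sets. That heuristic is not a proof, so as written your argument is incomplete at this step. However, the needed statement is precisely Theorem~\ref{THMbooton} (Booton's theorem), quoted in Section~\ref{SSGM}, and the paper closes exactly this step by invoking it (through Corollary~\ref{CORhankel-transforms}, which combines Theorems~\ref{THMmainlebesgue}, \ref{THMmainlorentz} and~\ref{THMbooton}). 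Your proof therefore becomes complete simply by citing Theorem~\ref{THMbooton} with $g=f_e$ and $g=f_o$ instead of leaving the comparison ``granted''; no new argument is required.
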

In higher dimensions, identity~\eqref{EQ-fourier-radial-functions} allows to characterize power weights for which Pitt's inequality (on $\R^n$) for radial $GM$ functions holds.

\begin{corollary}\label{COR-fourier-radial}
	Let $f$ be a real-valued radial function defined on $\R^n$, i.e., $f(x)=f_0(|x|)$, and such that $f_0\in GM$. Then
	\[
		\int_{\R^n}|x|^{-\beta q}|\widehat{f}(x)|^q\, dx	\asymp \int_{\R^n}|x|^{\gamma q}|f(x)|^q\, dx\asymp \int_{\R_+} t^{n-1+\gamma q}|f_0(t)|^q\, dt,
	\] 
	if and only if $\gamma=\beta+n-\dfrac{2n}{q}$ and  $\dfrac{n}{q}-\dfrac{n+1}{2}<\beta<\dfrac{n}{q}$.
\end{corollary}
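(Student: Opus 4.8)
\medskip
\noindent\textit{Proof strategy.} The plan is to reduce both equivalences to the one-dimensional Boas equivalence of Theorem~\ref{THMmainlebesgue}, using polar coordinates together with the radial Fourier--Hankel identity~\eqref{EQ-fourier-radial-functions}. Writing $\kappa_n=|\mathbb{S}^{n-1}|$, polar coordinates give, for every $\delta\in\R$,
\[
\int_{\R^n}|x|^{\delta}|f(x)|^q\,dx=\kappa_n\int_0^\infty t^{\,n-1+\delta}|f_0(t)|^q\,dt,
\]
so the second $\asymp$ in the statement (the case $\delta=\gamma q$) is in fact an identity. Since $f$ is radial, so is $\widehat f$; writing $\widehat f(x)=F_0(|x|)$ and setting $\alpha=\tfrac n2-1$ and $g(s):=s^{(n-1)/2}f_0(s)$, identity~\eqref{EQ-fourier-radial-functions} gives $F_0(y)=c_n\,y^{-(n-1)/2}H_\alpha g(y)$, whence
\[
\int_{\R^n}|x|^{-\beta q}|\widehat f(x)|^q\,dx=\kappa_n c_n^{\,q}\int_0^\infty y^{\,n-1-\beta q-(n-1)q/2}\,|H_\alpha g(y)|^q\,dy.
\]
(When $f\notin L^1(\R^n)$ this last identity is taken as the definition of $\widehat f$, through the Hankel transform limit~\eqref{EQlimit} for $g$, exactly as in Corollary~\ref{COR-fourier}.)

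The key observation is that $g\in GM$: multiplying a $GM$ function by a fixed power $s^{c}$ preserves the class, which is immediate from~\eqref{EQgmdefinition} since $s^{c}$ is comparable to a constant on each of the intervals $(x,2x)$ and $(x/\lambda,\lambda x)$. Hence Theorem~\ref{THMmainlebesgue} applies to $g$ with Bessel order $\alpha=\tfrac n2-1$, for which $\alpha+3/2=\tfrac{n+1}{2}$. Choose $p$ by $\tfrac1p=\gamma+\tfrac nq-\tfrac{n-1}{2}$, equivalently $\tfrac1{p'}=\tfrac nq-\beta-\tfrac{n-1}{2}$; matching exponents, one has $\Vert x^{1/p-1/q}g\Vert_{L^q}^q=\int_0^\infty t^{\,n-1+\gamma q}|f_0(t)|^q\,dt$, while $\Vert x^{1/p'-1/q}H_\alpha g\Vert_{L^q}^q$ equals a dimensional constant times $\int_{\R^n}|x|^{-\beta q}|\widehat f|^q\,dx$ by the second display above. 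The compatibility condition $\tfrac1p+\tfrac1{p'}=1$ of these two prescriptions for $p$ is exactly $\gamma=\beta+n-\tfrac{2n}{q}$, and, once it is imposed, $\tfrac1p=\beta-\tfrac nq+\tfrac{n+1}{2}$, so the admissibility range $\tfrac1{\alpha+3/2}<p<\infty$ of Theorem~\ref{THMmainlebesgue} translates precisely into $\tfrac nq-\tfrac{n+1}{2}<\beta<\tfrac nq$. Applying Theorem~\ref{THMmainlebesgue} to $g$ then closes the chain of equivalences, which proves the ``if'' direction.

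For the ``only if'' direction, the relation $\gamma=\beta+n-\tfrac{2n}{q}$ is forced by dilation invariance: replacing $f$ by $f(\lambda\,\cdot)$, which keeps $f_0\in GM$, multiplies $\int_{\R^n}|x|^{\gamma q}|f|^q$ by $\lambda^{-n-\gamma q}$ and $\int_{\R^n}|x|^{-\beta q}|\widehat f|^q$ by $\lambda^{-nq+n-\beta q}$, and for the equivalence to hold for all $\lambda>0$ these exponents must coincide. Granting the relation, the range of $\beta$ is seen to be necessary by testing the equivalence against $f_0=\chi_{(0,1)}$, so that $f$ is the indicator of the unit ball and $\widehat f$ an explicit Bessel function: the quantity $\int_0^\infty t^{\,n-1+\gamma q}|f_0|^q\,dt$ is always finite, whereas $\int_{\R^n}|x|^{-\beta q}|\widehat f(x)|^q\,dx$ diverges near the origin when $\beta\ge\tfrac nq$ (since $\widehat f(0)\neq0$) and at infinity when $\beta\le\tfrac nq-\tfrac{n+1}{2}$ (by the standard Bessel asymptotics, which give $L^q$ dyadic-annulus averages of $|\widehat f|$ of order $|x|^{-(n+1)/2}$). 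The same conclusion also follows from the known sharpness of Pitt's inequality.

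Beyond routine exponent bookkeeping, the two substantive ingredients are the stability of $GM$ under multiplication by powers --- which is standard --- and the sharpness of the range of $\beta$, for which the single test function $\chi_{(0,1)}$ suffices. I expect the latter, i.e.\ producing the extremizing examples for the ``only if'' direction, to be the only mildly delicate point; the ``if'' direction is an essentially mechanical consequence of Theorem~\ref{THMmainlebesgue}.
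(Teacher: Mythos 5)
Your ``if'' direction is exactly the paper's proof: the paper disposes of the corollary in one line by invoking the radial identity \eqref{EQ-fourier-radial-functions} and Theorem~\ref{THMmainlebesgue} with $\alpha=\frac n2-1$ and $\frac1p=\gamma-\frac{n-1}{2}+\frac nq$, which is precisely your reduction, including the translation of $\frac{1}{\alpha+3/2}<p<\infty$ into $\frac nq-\frac{n+1}{2}<\beta<\frac nq$. Your additional necessity sketch goes beyond what the paper records (it does not prove the ``only if'' part explicitly); it is essentially sound, except for one small slip: for $f_0=\chi_{(0,1)}$ the quantity $\int_0^1 t^{\,n-1+\gamma q}\,dt$ is \emph{not} always finite --- it diverges when $n+\gamma q\le 0$, i.e.\ when $\beta\le \frac nq-n$ (possible for $n\ge2$) --- so in that sub-range your test gives $\infty\asymp\infty$ and proves nothing; a GM test function supported away from the origin, e.g.\ $f_0=\chi_{(1,2)}$, fixes this.
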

Finally, we also give a generalization of Hardy-Littlewood theorem for the Fourier transform of real-valued radial functions \cite[Ch. IV]{titchmarsh}, which immediately follows from Corollary~\ref{COR-fourier-radial} with the appropriate choice of $\beta$ and $\gamma$.
\begin{corollary}
	Let $f(x)=f_0(|x|)$ be a real-valued radial function defined on $\R^n$, and such that $f_0\in GM$. Then
	\[
	\int_{\R^n} |\widehat{f}(x)|^q\, dx \asymp \int_{\R^n} |x|^{n(q-2)}|f(x)|^q\, dx
	\]
	if and only if $\dfrac{2n}{n+1}<q<\infty$, and
	\[
	\int_{\R^n} |x|^{n(q-2)}|\widehat{f}(x)|^q\, dx\asymp \int_{\R^n} |f(x)|^q\, dx
	\]
	if and only if $1<q<\dfrac{2n}{n-1}$.
\end{corollary}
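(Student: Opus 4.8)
The plan is to derive both equivalences directly from Corollary~\ref{COR-fourier-radial} by choosing the parameters $\beta$ and $\gamma$ so that the power weights $|x|^{-\beta q}$ and $|x|^{\gamma q}$ there reduce to the ones appearing in the statement, and then to translate the admissibility condition $\frac{n}{q}-\frac{n+1}{2}<\beta<\frac{n}{q}$ into a condition on $q$ alone. Since the two weights in each displayed equivalence determine $\beta$ and $\gamma$ uniquely (once we match exponents), and since the relation $\gamma=\beta+n-\frac{2n}{q}$ forced by Corollary~\ref{COR-fourier-radial} turns out to hold automatically for these choices, the whole content of the proof is the elementary arithmetic of the range conditions.

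For the first equivalence I would take $\beta=0$ and $\gamma=n-\frac{2n}{q}$; these satisfy $\gamma=\beta+n-\frac{2n}{q}$, and with this choice $-\beta q=0$ while $\gamma q=n(q-2)$, so the first $\asymp$ in Corollary~\ref{COR-fourier-radial} becomes exactly $\int_{\R^n}|\widehat f(x)|^q\,dx\asymp\int_{\R^n}|x|^{n(q-2)}|f(x)|^q\,dx$. It then remains to see when $\beta=0$ lies in the admissible range: the inequality $0<\frac nq$ holds for every $q>0$, while $\frac nq-\frac{n+1}{2}<0$ is equivalent to $\frac nq<\frac{n+1}{2}$, that is, to $q>\frac{2n}{n+1}$. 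Hence the equivalence holds precisely for $\frac{2n}{n+1}<q<\infty$.

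For the second equivalence I would instead take $\gamma=0$ and $\beta=\frac{2n}{q}-n$, which again satisfy $\gamma=\beta+n-\frac{2n}{q}$; now $-\beta q=n(q-2)$ and $\gamma q=0$, so the first $\asymp$ in Corollary~\ref{COR-fourier-radial} reads $\int_{\R^n}|x|^{n(q-2)}|\widehat f(x)|^q\,dx\asymp\int_{\R^n}|f(x)|^q\,dx$. Substituting $\beta=\frac{2n}{q}-n$ into $\frac nq-\frac{n+1}{2}<\beta<\frac nq$, the right-hand inequality reduces to $\frac nq<n$, that is, $q>1$, and the left-hand one to $\frac{n-1}{2}<\frac nq$, that is, $q<\frac{2n}{n-1}$; together this gives $1<q<\frac{2n}{n-1}$.

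Because Corollary~\ref{COR-fourier-radial} is itself stated as an equivalence, both implications (norm equivalence $\Rightarrow$ range of $q$, and range of $q$ $\Rightarrow$ norm equivalence) are inherited in each case; nothing is lost since matching exponents pins down $\beta$ and $\gamma$ with no remaining freedom. There is no genuine obstacle here — the result is a direct specialization of Corollary~\ref{COR-fourier-radial}, the only points to verify being the two short computations of the $q$-ranges carried out above.
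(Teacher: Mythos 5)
Your proposal is correct and coincides with the paper's argument: the paper also obtains this corollary as an immediate specialization of Corollary~\ref{COR-fourier-radial}, choosing $\beta=0$, $\gamma=n-\frac{2n}{q}$ for the first equivalence and $\gamma=0$, $\beta=\frac{2n}{q}-n$ for the second, with the ranges of $q$ coming from the condition $\frac{n}{q}-\frac{n+1}{2}<\beta<\frac{n}{q}$ exactly as you computed.
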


The paper is structured in the following way. In Section~\ref{SECprelim} we introduce the preparatory material concerning Hankel transforms, which includes their definition in the distributional sense. Section~\ref{SSGM} is devoted to the discussion of general monotone functions. In particular, we prove Theorem~\ref{THMaverageoperator}, which relates weighted norm inequalities between a general monotone function and its maximal averaging operator, a central tool to carry out this work. Section~\ref{SECdefinitenessHa} is devoted to find sufficient conditions on a function $f$ so that its Hankel transform $H_\alpha f$ is well defined as an improper integral (where we also assume $f$ is general monotone), and as a distribution. Finally, in Section~\ref{SECmainresults}, we put everything together in order to prove our main results, namely Theorems~\ref{THMmainlebesgue}~and~\ref{THMmainlorentz}. The mentioned results for the Fourier  transforms (Corollaries~\ref{COR-fourier}~and~\ref{COR-fourier-radial}) are also proved.

\section{Preliminary concepts}\label{SECprelim}
\subsection{Bessel functions}\label{SSbessel}
For $\alpha\geq -1/2$, the Bessel function of order $\alpha$, $J_\alpha$, is defined as
\[
J_\alpha(x)=\sum_{k=1}^\infty \frac{(-1)^k}{k!\Gamma(k+\alpha+1)}\Big(\frac{x}{2}\Big)^{\alpha+2k}, \qquad x>0,
\]
and the series converges absolutely and uniformly on every compact interval. Let us now mention some useful properties of $J_\alpha$, which can be found in  \cite{EMOT}, together with alternative definitions and several additional properties. First of all, we have the upper estimate
\begin{equation}
\label{EQbesselestimate}
|J_\alpha(x)|\leq \begin{cases}
C_\alpha x^{\alpha},&\text{if }x\leq 1,\\
C_\alpha x^{-1/2}, &\text{if }x>1,
\end{cases}
\end{equation}
or equivalently, $|J_\alpha(x)|\leq C_\alpha\min\{x^{\alpha},x^{-1/2}\}$. For $\alpha=\pm 1/2$, one has
\begin{equation}
\label{EQbesselfns1/2}
J_{-1/2}(x)=\sqrt{\frac{2}{\pi}} \frac{\cos x}{\sqrt{x}}, \qquad J_{1/2}(x)=\sqrt{\frac{2}{\pi}} \frac{\sin x}{\sqrt{x}},
\end{equation}
so that the cosine and sine transforms of $f$ are equal (up to a constant) to $H_{-1/2}f$ and $H_{1/2}f$, respectively.

For $\alpha>-1/2$, let us denote by 
\[
K^\alpha_y(x)=\int_0^x t^{1/2}J_\alpha(ty)\,dt,
\]
so that 
\begin{equation}
\label{EQprimitive}
\dfrac{d}{dx} K^\alpha_y(x)=x^{1/2}J_{\alpha}(xy).
\end{equation}
Such a function is well defined, since $J_\alpha$ is continuous and $tJ_{\alpha}(ty)$ vanishes as $t\to 0$. For $\alpha=-1/2$, it follows from \eqref{EQbesselfns1/2} that $K_y^{-1/2}(x)= \sqrt{\dfrac{2}{\pi}} \dfrac{\sin xy}{y^{3/2}}$ satisfies \eqref{EQprimitive}.

It is shown in \cite{miohankel} (see also \cite{DCGT}) that 
\begin{equation}
\label{EQprimitiveest}
|K^\alpha_y(x)|\lesssim y^{-3/2},\qquad x,y>0.
\end{equation}
This estimate is particularly useful when integrating by parts.

\subsection{Distributional Hankel transforms}

Under the assumption $f\in L^1(0,\infty)$, the integral in \eqref{EQhankeltr} is absolutely and uniformly convergent on $\R_+$, and if $H_\alpha f\in L^1(0,\infty)$, the inversion formula
\begin{equation}
\label{EQinversion}
f(x)=\int_0^\infty H_\alpha f(y)\sqrt{xy}J_\alpha(xy)\, dy
\end{equation}
holds. Furthermore, if $f$ and $G$ are in $L^1(0,\infty)$, and $F$ and $g$ denote the direct and inverse Hankel transforms of order $\alpha$ of $f$ and $G$, respectively, Parseval's formula
\begin{equation}
\label{EQparseval}
\int_0^\infty f(x)g(x)\, dx=\int_0^\infty F(x)G(x)\, dx
\end{equation}
holds.

However, such integrability conditions for the above theory to work are rather restrictive. We can define the Hankel transform of functions from wider spaces in the distributional sense, analogously as done for the Fourier transform \cite{Grafakosbook}, based on Parseval's formula \eqref{EQparseval}. We follow the theory of Zemanian. In  \cite{hankeldistrib}, he constructed, for any $\alpha\geq -1/2$, topological linear spaces $\mathcal{H}_\alpha$ of test functions defined on $(0,\infty)$ for which the Hankel transform $H_\alpha$ is an automorphism. We now briefly present the basic elements of this theory that will be useful for our purpose. Before proceeding further, we refer to \cite{IL,LTrebels,mizohata}, where the reader may also find a distributional approach to the Fourier transform of radial functions.

\begin{definition}
	A complex-valued function $\varphi\in C^\infty (0,\infty)$ belongs to $\mathcal{H}_\alpha$ if for any nonnegative integers $m,n$,
	\begin{equation}
	\label{EQseminorms}
	\gamma^\alpha_{m,n}(\varphi)=\sup_{x\in (0,\infty)}|x^m (x^{-1}D)^n(x^{-\alpha-1/2}\varphi(x))|<\infty,
	\end{equation}
	where $D=d/dx$.
\end{definition}
The space $\mathcal{H}_\alpha$ is linear over $\C$, and its topology is the one given by the seminorms \eqref{EQseminorms}.  In \cite{hankeldistrib}, the author also proved the following.
\begin{lemma}
	Let $\alpha\geq -1/2$. Then the Hankel transform $H_\alpha$ is an isomorphism from $\mathcal{H}_\alpha$ onto itself.
\end{lemma}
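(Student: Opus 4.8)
The plan is to reduce the assertion to two facts: \textbf{(i)} $H_\alpha$ maps $\mathcal H_\alpha$ continuously into itself, and \textbf{(ii)} $H_\alpha\circ H_\alpha$ is the identity on $\mathcal H_\alpha$. Granting both, $H_\alpha$ is a continuous bijection of $\mathcal H_\alpha$ whose inverse is $H_\alpha$ itself and therefore continuous; this is exactly the claim. Fact (ii) is the easy half. If $\varphi\in\mathcal H_\alpha$, then $\gamma^\alpha_{0,0}(\varphi)<\infty$ gives $|\varphi(x)|\le\gamma^\alpha_{0,0}(\varphi)\,x^{\alpha+1/2}$, integrable near $0$ because $\alpha\ge-1/2$, while $\gamma^\alpha_{m,0}(\varphi)<\infty$ for large $m$ gives $|\varphi(x)|\le\gamma^\alpha_{m,0}(\varphi)\,x^{\alpha+1/2-m}$, integrable near $\infty$; hence $\varphi\in L^1(0,\infty)$, and the same computation applied to $H_\alpha\varphi$ (which lies in $\mathcal H_\alpha$ once Fact (i) is known) shows $H_\alpha\varphi\in L^1(0,\infty)$ too. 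Thus the classical inversion formula \eqref{EQinversion} applies verbatim and yields $H_\alpha(H_\alpha\varphi)=\varphi$.

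The substance is Fact (i). The structural input is the Bessel differential equation, which shows that the Hankel kernel $k_y(x)=\sqrt{xy}\,J_\alpha(xy)$ is, in the $x$-variable, an eigenfunction with eigenvalue $-y^2$ of the formally self-adjoint Bessel-type operator
\[
S_\alpha=x^{-\alpha-1/2}\,\frac{d}{dx}\,x^{2\alpha+1}\,\frac{d}{dx}\,x^{-\alpha-1/2},
\]
and, by the symmetry $k_y(x)=k_x(y)$, that $S_{\alpha,y}k_y(x)=-x^2k_y(x)$. Integrating by parts twice under the integral sign in \eqref{EQhankeltr} — the boundary terms at $\infty$ vanish because elements of $\mathcal H_\alpha$ and all their derivatives decay faster than any power, and those at $0$ vanish because, writing $g=x^{-\alpha-1/2}\varphi$, one has $g$ and $g'=x\cdot(x^{-1}Dg)$ bounded so that the weighted products are $O(x^{2\alpha+2})\to0$ — one obtains the operational identities
\[
H_\alpha(S_\alpha\varphi)(y)=-y^2\,H_\alpha\varphi(y),\qquad S_\alpha\big(H_\alpha\varphi\big)(y)=-H_\alpha(x^2\varphi)(y).
\]
Hence $H_\alpha$ interchanges multiplication by $x^2$ with $-S_\alpha$. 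Since the seminorms $\gamma^\alpha_{m,n}$ are built precisely from iterated multiplication by $x^2$ and from $S_\alpha$ — a direct conjugation computation gives $x^{-\alpha-1/2}S_\alpha\varphi=x^2\theta^2g+(2\alpha+2)\theta g$ with $\theta=x^{-1}D$, which lets one rewrite every $\theta^n g$, and thus every seminorm of $\varphi$, in terms of the quantities $x^j\,x^{-\alpha-1/2}S_\alpha^{\,k}\varphi$ — it follows that for each $m,n$ there is a finite set of pairs $(m',n')$ with $\gamma^\alpha_{m,n}(H_\alpha\varphi)\le C\sum_{(m',n')}\gamma^\alpha_{m',n'}(\varphi)$. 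The base case $m=n=0$ is handled directly: by the Bessel bound \eqref{EQbesselestimate},
\[
\big|y^{-\alpha-1/2}H_\alpha\varphi(y)\big|\lesssim\int_0^\infty |\varphi(x)|\,y^{-\alpha-1/2}\sqrt{xy}\,\min\{(xy)^\alpha,(xy)^{-1/2}\}\,dx,
\]
and splitting the integral at $x=1/y$ and inserting the decay of $\varphi$ bounds this uniformly in $y$ by finitely many $\gamma^\alpha_{j,0}(\varphi)$. This proves continuity of $H_\alpha$ on $\mathcal H_\alpha$, establishing Fact (i) and completing the proof.

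I expect the main obstacle to be the careful justification of the two operational identities, in particular a clean verification that no boundary contributions survive at the origin, where the weights $x^{2\alpha+1}$ and $x^{-\alpha-1/2}$ may be singular for $\alpha$ near $-1/2$, together with the bookkeeping that converts the operators $(x^{-1}D)^n$ in the seminorms into the ``good'' operator $S_\alpha$ and powers of $x$. A variant that sidesteps part of this is to use the index-shifting recurrences $\frac{d}{dz}(z^{-\alpha}J_\alpha(z))=-z^{-\alpha}J_{\alpha+1}(z)$ and $\frac{d}{dz}(z^{\alpha+1}J_{\alpha+1}(z))=z^{\alpha+1}J_\alpha(z)$ to relate $\mathcal H_\alpha$ to $\mathcal H_{\alpha+1}$ and argue inductively, at the cost of tracking the whole family $\{H_\alpha\}_{\alpha\ge-1/2}$ simultaneously.
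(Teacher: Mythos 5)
The paper itself offers no proof of this lemma: it is imported verbatim from Zemanian \cite{hankeldistrib}, so there is no in-paper argument to compare against, and your task is really to reconstruct Zemanian's proof. Your overall strategy is the right one — continuity of $H_\alpha$ on $\mathcal{H}_\alpha$ plus $H_\alpha\circ H_\alpha=\mathrm{id}$ via \eqref{EQinversion} — and several of the ingredients are correct as stated: $\varphi\in L^1(0,\infty)$ follows from the seminorms \eqref{EQseminorms} exactly as you say, the eigenfunction relation $S_\alpha k_y=-y^2k_y$ and the two operational identities are right, the boundary terms at $0$ do vanish like $x^{2\alpha+2}$, and the identity $x^{-\alpha-1/2}S_\alpha\varphi=x^2\theta^2g+(2\alpha+2)\theta g$ checks out.

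The genuine gap is the step you dismiss as bookkeeping: the claim that every seminorm $\gamma^\alpha_{m,n}(\varphi)$ is bounded by finitely many quantities $\sup_x|x^j\,x^{-\alpha-1/2}S_\alpha^k\varphi|$. Your displayed identity goes the wrong way — it expresses $S_\alpha\varphi$ in terms of $\theta g$ and $\theta^2g$, but gives no algebraic way to isolate the individual $\theta^n g$ from iterates of $S_\alpha$ and powers of $x$. One can recover $\theta g$ by writing $S_\alpha$ in divergence form in the variable $t=x^2/2$ and integrating from the origin (using $t^{\alpha+1}D_tG\to0$), but already $\theta^2g=D_t^2G$ requires dividing $LG-(2\alpha+2)D_tG$ by $t$, which is not controlled by sup-norms of $S_\alpha^k\varphi$ without additional vanishing at $0$; iterating pushes you out of the fixed-$\alpha$ calculus. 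This is exactly why Zemanian's actual argument uses the first-order factorization $(x^{-1}D)^n(x^{-\alpha-1/2}\varphi)=x^{-(\alpha+n)-1/2}N_{\alpha+n-1}\cdots N_\alpha\varphi$ with $N_\beta=x^{\beta+1/2}Dx^{-\beta-1/2}$, together with $H_{\beta+1}(N_\beta\varphi)(y)=-yH_\beta\varphi(y)$: the "variant" you mention in your last sentence is not an optional shortcut but the missing ingredient. With it, $\gamma^\alpha_{m,n}(H_\alpha\varphi)$ reduces to a zeroth-order bound on $H_{\alpha+n}(x^{2n}\varphi)$, which your base-case kernel estimate (correct, and extendable to $m>0$ via $y^2H_\alpha\chi=-H_\alpha(S_\alpha\chi)$) then closes.
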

For a fixed $\alpha\geq -1/2$, the space $\mathcal{H}_\alpha$ in the theory of the Hankel transform (of order $\alpha$) plays an analogous role as the Schwartz space $\mathscr{S}$ in the theory of the Fourier transform. For a more exhaustive treatment of the spaces $\mathcal{H}_\alpha$, see Section 2 of \cite{hankeldistrib}.

Let us denote $\R_+:= (0,\infty)$. By $\mathcal{D}_{\R_+}$ we denote the space of smooth functions supported on  $\R_+$,  with the topology that makes its dual $\mathcal{D}_{\R_+}'$ the space of Schwartz distributions on $\R_+$ (cf. \cite[Ch. III]{schwartzdistr} for further details). Under these definitions, it turns out that 
\begin{lemma}
	The space $\mathcal{D}_{\R_+}$ is a subspace of $\mathcal{H}_\alpha$ for any $\alpha\geq -1/2$.
\end{lemma}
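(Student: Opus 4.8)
The plan is to show that every $\varphi \in \mathcal{D}_{\R_+}$ satisfies the seminorm estimates \eqref{EQseminorms} for all nonnegative integers $m,n$, which is precisely the membership condition for $\mathcal{H}_\alpha$; the inclusion of topologies then follows by the standard observation that convergence in $\mathcal{D}_{\R_+}$ (uniform convergence of all derivatives on a common compact subset of $\R_+$) is stronger than convergence with respect to the countable family $\{\gamma^\alpha_{m,n}\}$.

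First I would fix $\varphi \in \mathcal{D}_{\R_+}$ and let $K = \mathrm{supp}\,\varphi \subset (0,\infty)$, a compact set, so that $K \subseteq [a,b]$ for some $0 < a \leq b < \infty$. The key point is that on $K$ the weight $x^{-\alpha - 1/2}$ and all its derivatives are smooth and bounded, precisely because $x$ is bounded away from $0$ there; hence $\psi(x) := x^{-\alpha-1/2}\varphi(x)$ is a smooth function supported in $K$, and so is $(x^{-1}D)^n \psi$ — this last claim needs the remark that $x^{-1}D$ maps functions supported in $[a,b]$ to functions supported in $[a,b]$, again using $x \geq a > 0$ on the support, so no singularity at the origin is introduced by the factor $x^{-1}$. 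Then $x^m (x^{-1}D)^n \psi(x)$ is a continuous function supported on a compact set, hence bounded, which gives $\gamma^\alpha_{m,n}(\varphi) < \infty$ for every $m,n$. This establishes $\varphi \in \mathcal{H}_\alpha$.

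Next I would address the topological part: if $\varphi_k \to 0$ in $\mathcal{D}_{\R_+}$, then there is a fixed compact $K \subset \R_+$ containing all $\mathrm{supp}\,\varphi_k$, and $D^j \varphi_k \to 0$ uniformly for every $j$. Since on $K$ the operator $\varphi \mapsto x^m (x^{-1}D)^n(x^{-\alpha-1/2}\varphi)$ is a fixed finite-order linear differential operator with coefficients that are smooth and bounded on $K$ (by the same boundedness-away-from-zero argument), applying it to $\varphi_k$ yields something that tends to $0$ uniformly; taking the supremum gives $\gamma^\alpha_{m,n}(\varphi_k) \to 0$. Thus the inclusion $\mathcal{D}_{\R_+} \hookrightarrow \mathcal{H}_\alpha$ is continuous, which is the precise sense in which $\mathcal{D}_{\R_+}$ is a subspace.

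I do not expect any serious obstacle here; the only point requiring care is the bookkeeping for the operator $x^{-1}D$ — one should verify, perhaps by an easy induction on $n$, that $(x^{-1}D)^n$ applied to a smooth function supported in $[a,b]$ again produces a smooth function supported in $[a,b]$, with the singular factors $x^{-1}$ causing no trouble since $x \geq a$ there. Everything else is a routine consequence of compact support bounding $x$ both above and away from zero. This lemma, while elementary, is what justifies testing distributional Hankel transforms against $\mathcal{D}_{\R_+}$ rather than only against all of $\mathcal{H}_\alpha$.
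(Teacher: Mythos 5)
Your argument is correct: compact support in $(0,\infty)$ bounds $x$ both above and away from zero, so $x^{-\alpha-1/2}\varphi$ and its images under $(x^{-1}D)^n$ remain smooth and supported in the same compact interval, giving $\gamma^\alpha_{m,n}(\varphi)<\infty$ for all $m,n$, and the same reasoning gives continuity of the inclusion. The paper itself offers no proof (it defers to Sections 2--5 of Zemanian's work), and your direct verification is exactly the standard argument used there, so there is nothing to add.
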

It should also be mentioned that the space $\mathcal{D}_{\R_+}$ is not dense in $\mathcal{H}_\alpha$.

The analogue to the space of tempered Schwartz distributions $\mathscr{S}'$ is defined as follows. We denote by $\mathcal{H}_\alpha'$ the dual space of $\mathcal{H}_\alpha$, which is a linear space. By $\langle T,\varphi\rangle$, we denote the complex number that $T\in \mathcal{H}_\alpha'$ assigns to $\varphi\in \mathcal{H}_\alpha$.

The spaces $\mathcal{H}_\alpha'$ are equipped with the weak topology generated by the seminorms
\[
\eta_\varphi(T):=|\langle T,\varphi\rangle|, \qquad \varphi\in \mathcal{H}_\alpha\text{ arbitrary.}
\]
Moreover, for any $T\in \mathcal{H}_\alpha'$, there exist $r\in \N \cup\{0\}$ and $C>0$ such that for every $\varphi\in \mathcal{H}_\alpha$,
\[
|\langle T,\varphi\rangle|\leq C \max_{\substack{0\leq m\leq r\\ 0\leq n\leq r}}\gamma_{m,n}^\alpha(\varphi),
\]
which is proved in an analogous way as its counterpart for tempered distributions \cite{zemanianbook}.

Let us now define the Hankel transform of a distribution $T\in \mathcal{H}_\alpha$. It is defined similarly as the Fourier transform of a tempered Schwartz distribution, that is, via Parseval's theorem \eqref{EQparseval}. 
\begin{definition}
	The Hankel transform of order $\alpha\geq -1/2$ of $T\in \mathcal{H}_\alpha'$, $H_\alpha T$, is defined by the relation
	\begin{equation}
	\label{EQhankeldefdistribution}
	\langle T,H_\alpha \varphi\rangle=\langle  H_\alpha T,\varphi\rangle,\qquad \varphi\in \mathcal{H}_\alpha.
	\end{equation}
\end{definition}
Relation \eqref{EQhankeldefdistribution} determines a functional $H_\alpha T$ on $\mathcal{H}_\alpha$, and it can also be used to define the inverse transform $H_\alpha^{-1}$. 
\begin{theorem}\label{THMisomorphism}
	Let $\alpha\geq -1/2$. Then the Hankel transform $H_\alpha$ is an isomorphism from $\mathcal{H}_\alpha'$ onto itself.
\end{theorem}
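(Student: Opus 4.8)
The plan is to run the standard transposition argument, precisely as one passes from the Fourier transform on $\mathscr{S}$ to tempered distributions. Everything rests on the facts already recorded in the excerpt: $H_\alpha$ is a topological isomorphism of $\mathcal{H}_\alpha$ onto itself (Zemanian's lemma), and $H_\alpha\circ H_\alpha=\mathrm{id}$ on $\mathcal{H}_\alpha$, which is nothing but the inversion formula \eqref{EQinversion} read on test functions.

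First I would verify that \eqref{EQhankeldefdistribution} genuinely assigns to each $T\in\mathcal{H}_\alpha'$ an element $H_\alpha T\in\mathcal{H}_\alpha'$: the functional $\varphi\mapsto\langle T,H_\alpha\varphi\rangle$ is linear, and it is continuous as the composition of the continuous linear map $H_\alpha\colon\mathcal{H}_\alpha\to\mathcal{H}_\alpha$ with the continuous functional $T$. Quantitatively, combining the seminorm bound for $T$ with the continuity of $H_\alpha$ gives, for every $\varphi\in\mathcal{H}_\alpha$,
\[
|\langle H_\alpha T,\varphi\rangle|=|\langle T,H_\alpha\varphi\rangle|\leq C\max_{0\leq m,n\leq r}\gamma^\alpha_{m,n}(H_\alpha\varphi)\leq C'\max_{0\leq m,n\leq r'}\gamma^\alpha_{m,n}(\varphi),
\]
so $H_\alpha T\in\mathcal{H}_\alpha'$, and linearity of $T\mapsto H_\alpha T$ is immediate from \eqref{EQhankeldefdistribution}.

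Next, for bijectivity I would show $H_\alpha\circ H_\alpha=\mathrm{id}$ on $\mathcal{H}_\alpha'$. Indeed, for $T\in\mathcal{H}_\alpha'$ and arbitrary $\varphi\in\mathcal{H}_\alpha$, applying \eqref{EQhankeldefdistribution} twice and then the involution identity on $\mathcal{H}_\alpha$,
\[
\langle H_\alpha(H_\alpha T),\varphi\rangle=\langle H_\alpha T,H_\alpha\varphi\rangle=\langle T,H_\alpha(H_\alpha\varphi)\rangle=\langle T,\varphi\rangle,
\]
so $H_\alpha$ is a linear bijection of $\mathcal{H}_\alpha'$ whose inverse is itself. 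For continuity with respect to the weak topologies generated by the seminorms $\eta_\varphi$, I would use the identity $\eta_\varphi(H_\alpha T)=|\langle H_\alpha T,\varphi\rangle|=|\langle T,H_\alpha\varphi\rangle|=\eta_{H_\alpha\varphi}(T)$: thus $H_\alpha$ pulls each defining seminorm of the target back to a defining seminorm of the source, which is exactly continuity, and the same identity applied to $H_\alpha^{-1}=H_\alpha$ handles the inverse. Hence $H_\alpha$ is a topological isomorphism of $\mathcal{H}_\alpha'$ onto itself.

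There is essentially no hard step: the entire mathematical content sits in the already-cited isomorphism property of $H_\alpha$ on $\mathcal{H}_\alpha$ and in the inversion formula, while the passage to $\mathcal{H}_\alpha'$ is pure duality, the continuity question being trivialised by the weak topology and the one-line seminorm identity above. The only point deserving a word of care is that $H_\alpha\varphi$ ranges over all of $\mathcal{H}_\alpha$ as $\varphi$ does (again by Zemanian's lemma), which is what legitimises reading $\eta_{H_\alpha\varphi}$ as a genuine seminorm of the source space; one could alternatively avoid the involution altogether and transpose $H_\alpha^{-1}$ on $\mathcal{H}_\alpha$ directly, checking both compositions as above, but invoking $H_\alpha^2=\mathrm{id}$ is cleaner.
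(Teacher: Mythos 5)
Your argument is correct: the paper does not prove this theorem itself but imports it from Zemanian \cite{hankeldistrib}, and your transposition argument — continuity of $\varphi\mapsto\langle T,H_\alpha\varphi\rangle$ from the automorphism of $\mathcal{H}_\alpha$, bijectivity from $H_\alpha\circ H_\alpha=\mathrm{id}$ on test functions (the inversion formula \eqref{EQinversion}, applicable since $\mathcal{H}_\alpha\subset L^1(0,\infty)$ for $\alpha\geq-1/2$), and weak continuity via $\eta_\varphi(H_\alpha T)=\eta_{H_\alpha\varphi}(T)$ — is precisely the standard duality proof given in that reference. No gaps; this matches the intended route.
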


The ordinary Hankel transform defined for functions $f\in L^1(0,\infty)$ is then a special case of the distributional Hankel transform \eqref{EQhankeldefdistribution}.

We emphasize that all the results presented in this section can be found with more detail in Sections 2--5 of \cite{hankeldistrib}.

\section{General monotone functions}\label{SSGM}
The concept of general monotonicity (already defined in \eqref{EQgmdefinition}) was first introduced by Tikhonov for sequences in  \cite{TikSzeged,TikGM} (see also \cite{LTnach} for a comprehensive survey on $GM$ functions and sequences). Note that without loss of generality, if $f\in GM$, we may take a different $GM$ constant $\lambda'=2^\nu>\lambda$ with $\nu \in \N$ in place of $\lambda$. For convenience, we will use this property repeatedly.

 We now list some properties of $GM$ functions that will be useful later. 

\begin{lemma}[\cite{LTnach}]\label{LEMgmproperties} Let $f\in GM$.
	\begin{enumerate}[label=\textnormal{(}\roman{*}\textnormal{)}]
		\item The function $x^\gamma f(x)$ is general monotone for any $\gamma\in \R$.
		\item For any $t>0$ and any $u\in [t,2t]$, $|f(u)|\lesssim \displaystyle\int_{t/\lambda}^{\lambda t}\frac{|f(x)|}{x}\, dx$.
		\item For any $t>0$ and any $\gamma\in\R$, $\displaystyle\int_t^\infty x^\gamma|df(x)|\lesssim \displaystyle\int_{t/\lambda}^\infty x^{\gamma-1}|f(x)|\, dx$.
		\item Let $\varepsilon>0$. If $f\in L^1(0,\varepsilon)$, then $xf(x)\to 0$ as $x\to 0$. If $f\in L^1(\varepsilon,\infty)$, then $xf(x)\to 0$ as $x\to \infty$.
	\end{enumerate}
\end{lemma}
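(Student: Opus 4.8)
The plan is to deduce all four items directly from the defining inequality \eqref{EQgmdefinition} by dyadic localization, using throughout the harmless reduction to $\lambda=2^\nu$ with $\nu\in\N$. For (i), set $g(x)=x^\gamma f(x)$; as measures, $dg(t)=\gamma t^{\gamma-1}f(t)\,dt+t^\gamma\,df(t)$, hence $|dg(t)|\le|\gamma|\,t^{\gamma-1}|f(t)|\,dt+t^\gamma|df(t)|$. Since $t^\gamma\asymp x^\gamma$ on $[x,2x]$, integrating over $[x,2x]$ and applying \eqref{EQgmdefinition} to $\int_x^{2x}|df|$ bounds $\int_x^{2x}|dg|$ by a constant times $x^\gamma\int_{x/\lambda}^{\lambda x}|f(t)|t^{-1}\,dt+\int_{x/\lambda}^{\lambda x}t^{\gamma-1}|f(t)|\,dt$; on $[x/\lambda,\lambda x]$ we have $x^\gamma\asymp t^\gamma$, so both terms are $\lesssim\int_{x/\lambda}^{\lambda x}|g(t)|t^{-1}\,dt$, which is precisely the $GM$ condition for $g$.

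For (ii), fix $t>0$. Because $f$ is locally of bounded variation, for any two points $u,v\in[t,2t]$ one has $|f(u)-f(v)|\le\int_t^{2t}|df|\le C\int_{t/\lambda}^{\lambda t}|f(x)|x^{-1}\,dx$ by \eqref{EQgmdefinition}, so all values of $f$ on $[t,2t]$ lie within a fixed multiple of the right-hand side of one another. An averaging argument then produces a point $v\in[t,2t]$ with $|f(v)|\lesssim t^{-1}\int_t^{2t}|f(x)|\,dx\le2\int_t^{2t}|f(x)|x^{-1}\,dx$ (using $x\le2t$); combining the two estimates and enlarging $\lambda$ to be $\ge2$ gives the claim for every $u\in[t,2t]$.

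For (iii), decompose $[t,\infty)=\bigcup_{k\ge0}[2^kt,2^{k+1}t]$; on the $k$-th block $x^\gamma\asymp(2^kt)^\gamma$, so \eqref{EQgmdefinition} gives $\int_{2^kt}^{2^{k+1}t}x^\gamma|df|\lesssim(2^kt)^\gamma\int_{2^kt/\lambda}^{\lambda 2^kt}|f(x)|x^{-1}\,dx$, and since $(2^kt)^\gamma\asymp x^\gamma$ on the dilated block $[2^kt/\lambda,\lambda 2^kt]$ this is $\lesssim\int_{2^kt/\lambda}^{\lambda 2^kt}x^{\gamma-1}|f(x)|\,dx$. With $\lambda=2^\nu$, the dilated blocks cover each point of $\R_+$ at most $2\nu+1$ times, so summing in $k$ yields $\int_t^\infty x^\gamma|df|\lesssim\int_{t/\lambda}^\infty x^{\gamma-1}|f(x)|\,dx$.

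Finally, for (iv) I would apply (ii): for $u\in[x,2x]$, $u|f(u)|\le2x|f(u)|\lesssim x\int_{x/\lambda}^{\lambda x}|f(y)|y^{-1}\,dy\le\lambda\int_{x/\lambda}^{\lambda x}|f(y)|\,dy$, using $x/y\le\lambda$ on the interval of integration. If $f\in L^1(0,\varepsilon)$, then as $x\to0$ the interval $[x/\lambda,\lambda x]$ shrinks to a point and absolute continuity of the Lebesgue integral forces $\int_{x/\lambda}^{\lambda x}|f|\to0$, whence $\sup_{u\in[x,2x]}u|f(u)|\to0$ and in particular $xf(x)\to0$; the case $f\in L^1(\varepsilon,\infty)$ is identical, using that tail integrals vanish as $x\to\infty$. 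No step presents a genuine obstacle, the whole content being the reduction to \eqref{EQgmdefinition}; the only points needing mild care are the bounded-overlap count in (iii) (where taking $\lambda$ a power of $2$ makes it clean) and the measure-theoretic justification in (ii) that a bounded-variation function attains, at some point of $[t,2t]$, a value comparable to its average there. Since the statement is known, one may alternatively just cite \cite{LTnach}.
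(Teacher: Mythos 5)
Your proposal is correct, but note that the paper itself offers no proof of this lemma: it is quoted directly from the survey \cite{LTnach}, so there is no in-paper argument to compare against. Your self-contained derivation from \eqref{EQgmdefinition} is sound and is essentially the standard one. In (i) the distributional product rule $dg=\gamma t^{\gamma-1}f\,dt+t^{\gamma}\,df$ is legitimate because $t^{\gamma}$ is $C^1$ (hence continuous at any jumps of $f$), and the comparison $t^{\gamma}\asymp x^{\gamma}$ on $[x/\lambda,\lambda x]$ costs only a constant depending on $\gamma$ and $\lambda$, which is allowed since the $GM$ constants of $x^{\gamma}f(x)$ may differ from those of $f$. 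In (ii) the averaging step (existence of $v\in[t,2t]$ with $|f(v)|\le t^{-1}\int_t^{2t}|f|$) plus the variation bound $|f(u)-f(v)|\le\int_t^{2t}|df|\le C\int_{t/\lambda}^{\lambda t}|f(x)|x^{-1}\,dx$ is exactly the right mechanism; just make explicit that the inclusion $[t,2t]\subset[t/\lambda,\lambda t]$ uses the harmless normalization $\lambda=2^{\nu}\ge 2$, which you do invoke. The bounded-overlap count in (iii) (each point lies in at most $2\nu+1$ dilated blocks $[2^{k-\nu}t,2^{k+\nu}t]$) and the deduction of (iv) from (ii) via $x\int_{x/\lambda}^{\lambda x}|f(y)|y^{-1}\,dy\le\lambda\int_{x/\lambda}^{\lambda x}|f(y)|\,dy$, together with absolute continuity of the integral near $0$ and the vanishing of tails of $L^1$ functions at infinity, are all correct as written.
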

\begin{remark}
	It is shown in \cite{miohankelgm} that if instead of $f\in L^1(\varepsilon,\infty)$, the function $f$ is real-valued and $\int_{\varepsilon}^{\infty} f(t)\, dt$ converges in the improper sense, then $xf(x)\to 0$ as $x\to \infty$.
\end{remark}

The following result due to B. Booton \cite{booton} relates the Lorentz and weighted Lebesgue norms of $GM$ functions. It was originally stated in more generality, but we present  a simplified version that is enough for our purpose.
\begin{thmletter}\label{THMbooton}
	Let $f\in GM$. For $1<p<\infty$ and $1\leq q\leq \infty$, or $p=q=\infty$, one has
	\[
	\Vert f\Vert_{L^q_{t(p,q)}}\asymp \Vert f\Vert_{L^{p,q}}.
	\]
\end{thmletter}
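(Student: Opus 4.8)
The plan is to establish the two-sided inequality $\Vert f\Vert_{L^q_{t(p,q)}}\asymp \Vert f\Vert_{L^{p,q}}$ by exploiting the structure that general monotonicity imposes on the level sets of $f$. Recall that for a \emph{decreasing} function the two norms literally coincide, so the whole point is that a $GM$ function, while not monotone, behaves like one ``on average'' at dyadic scales. First I would note that one direction is immediate: by Lemma~\ref{LEMgmproperties}(ii), for every $t>0$ and every $u\in[t,2t]$ one has $|f(u)|\lesssim \int_{t/\lambda}^{\lambda t}|f(x)|\,\frac{dx}{x}$. Iterating this over a dyadic decomposition and using Hardy-type inequalities, one controls the ``majorant'' $\overline f(t):=\sup_{u\ge t}|f(u)|$ (or a dyadic version thereof) by an averaging operator applied to $|f|$, and since averaging operators are bounded on $L^q_{t(p,q)}$ for $1<p<\infty$ (here $p>1$ is exactly what is needed), one gets $\Vert \overline f\Vert_{L^q_{t(p,q)}}\lesssim \Vert f\Vert_{L^q_{t(p,q)}}$. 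Because $f^*(x)\le \overline f(cx)$ up to dilation (the decreasing rearrangement is dominated by any decreasing majorant, after adjusting the argument by a constant coming from $\lambda$), and because for the decreasing function $\overline f$ the $L^{p,q}$ and $L^q_{t(p,q)}$ norms agree, this yields $\Vert f\Vert_{L^{p,q}}=\Vert f^*\Vert_{L^q_{t(p,q)}}\lesssim \Vert \overline f\Vert_{L^q_{t(p,q)}}\lesssim\Vert f\Vert_{L^q_{t(p,q)}}$.

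For the reverse inequality $\Vert f\Vert_{L^q_{t(p,q)}}\lesssim \Vert f\Vert_{L^{p,q}}$, the natural route is through the distribution function. I would use the alternative expression \eqref{EQlorentz-norm-alternative} for the Lorentz norm in terms of $s\,d_f(s)^{1/p}$, and show that $GM$ forces $d_f$ to be comparable, at dyadic heights $s=2^k$, to the measure of a single interval (or a short union of intervals) on which $|f|$ is of size roughly $2^k$. Concretely: fix a level $s$ and let $x_0$ be a point where $|f(x_0)|>s$; by Lemma~\ref{LEMgmproperties}(ii) the oscillation of $f$ on the dyadic block containing $x_0$ is controlled by a local average, so either $|f|\gtrsim s$ on a definite fraction of that block, or $s$ is comparable to the average of $|f|$ nearby — in either case $\{|f|>cs\}$ already contains an interval of length $\asymp x_0$, and simultaneously $|f|\lesssim s$ off a controlled dyadic neighborhood. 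This ``self-improving'' behavior lets one write $\int_0^\infty x^{q/p-1}|f(x)|^q\,dx$, decomposed dyadically in $x$, as a sum over scales that is dominated by $\sum_k (2^k d_f(2^k/C))^{q/p}$-type terms, i.e.\ by $\Vert f\Vert_{L^{p,q}}^q$; for $q=\infty$ one argues with suprema instead of sums, and the case $p=q=\infty$ is trivial since then both norms are $\Vert f\Vert_\infty$ up to the weight $x^{1/p}\equiv 1$.

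The main obstacle I anticipate is making the informal phrase ``$GM$ forces the level sets to look like intervals'' precise and uniform in the scale, especially because a real-valued $GM$ function can change sign and can have its level set $\{|f|>s\}$ spread over several (but boundedly many, per dyadic block) subintervals. One has to carefully track the constant $\lambda$ — replacing it by $2^\nu$ as the paper suggests — so that the overlapping dyadic windows $[t/\lambda,\lambda t]$ appearing in Lemma~\ref{LEMgmproperties}(ii)--(iii) only cause bounded multiplicity when summed, and so that the dilations relating $f^*$, $\overline f$, and $d_f$ compose to absolute constants. A clean way to organize this is to prove once and for all a lemma of the form: for $f\in GM$ there is a decreasing function $g$ (built from dyadic maxima or dyadic averages of $|f|$) with $g(cx)\lesssim f^*(x)\lesssim g(x/c)$ and $\Vert g\Vert_{L^q_{t(p,q)}}\asymp \Vert f\Vert_{L^q_{t(p,q)}}$; granting such a $g$, both inequalities of the theorem follow mechanically since for decreasing $g$ the spaces coincide. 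Essentially all the real work — the $GM$ estimates, the Hardy inequalities with power weights requiring $1<p<\infty$, and the bookkeeping of $\lambda$-dilations — is concentrated in constructing that $g$ and verifying its three properties.
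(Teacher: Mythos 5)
First, a point of comparison: the paper does not prove Theorem~A at all --- it is imported verbatim (in simplified form) from Booton \cite{booton} --- so your proposal can only be judged on its own merits, not against an in-paper argument.

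Your first direction, $\Vert f\Vert_{L^{p,q}}\lesssim\Vert f\Vert_{L^q_{t(p,q)}}$, is essentially sound: the decreasing majorant $\overline f(t)=\sup_{u\ge t}|f(u)|$ satisfies $f^*\le\overline f$ pointwise (no dilation is even needed), Lemma~\ref{LEMgmproperties}(ii) gives $\overline f(t)\lesssim\int_{t/\lambda}^{\infty}|f(x)|\,\frac{dx}{x}$, and the dual Hardy inequality with $\sigma=1/p>0$ closes the estimate. Note, however, that this step only needs $p<\infty$; your remark that ``$p>1$ is exactly what is needed'' here is misplaced --- the hypothesis $p>1$ is consumed by the \emph{other} direction.

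The other direction is where there is a genuine gap. Your plan hinges on the local dichotomy ``if $|f(x_0)|>s$ then $\{|f|>cs\}$ contains an interval (or at least a set) of length $\asymp x_0$, with constants depending only on the $GM$ constants.'' This is false for general real-valued $GM$ functions: one can build a $GM$ function consisting of a cascade of narrow tents at scales $x_0\lambda^{-j}$ whose heights grow fast enough that each tent's variation is paid for by the much taller tent one scale below; all $GM$ inequalities then hold with fixed $C,\lambda$, yet for the lowest tent (peak $>s$ at $x_0$) the set $\{|f|>cs\}$ has measure only $\varepsilon x_0$ with $\varepsilon$ arbitrarily small. The phenomenon you are missing is exactly what forces the good/bad-number machinery of Section~\ref{SSGM}: the ``mass'' justifying a large value of $|f|$ near $x_0$ may sit at a far-away scale, so the accounting must be global (bad blocks charged geometrically to good blocks), not a two-case local argument; and even on good blocks one only gets a set of proportional measure, not an interval. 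Moreover, all of this is unnecessary for the inequality you want: from Lemma~\ref{LEMgmproperties}(ii), $|f(x)|\lesssim\frac{1}{x}\int_{x/\lambda}^{\lambda x}|f(t)|\,dt\le\frac{1}{x}\int_0^{\lambda x}f^*(u)\,du$ by the Hardy--Littlewood rearrangement inequality $\int_E|f|\le\int_0^{|E|}f^*$, and then Hardy's inequality with $\sigma=1-1/p>0$ (this is precisely where $1<p$ enters) gives $\Vert f\Vert_{L^q_{t(p,q)}}\lesssim\Vert f\Vert_{L^{p,q}}$ directly, with the $q=\infty$ and $p=q=\infty$ cases handled by the same pointwise bound. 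Replacing your level-set analysis by this two-line estimate would make the whole proof correct and short.
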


Define, for $g,\varphi:\R_+\to \C$,
\begin{equation}
\label{EQdefauxiliary}
\Phi_g(t)=\langle \varphi_t,g\rangle=\int_0^\infty \varphi_t(u)g(u)\, du,
\end{equation}
where $\varphi_t(u)=t^{-1}\varphi(u/t)$. We also denote
\begin{equation}
\label{EQdefmaximal}
M\Phi_g(t)=\sup_{x\geq t}|\Phi_g(x)|.
\end{equation}
Note that if $\varphi=\chi_{(0,1)}$, then $M\Phi_g(t)=\displaystyle \sup_{x\geq t}\bigg| \frac{1}{x}\int_0^x g(u)\, du\bigg|$. We now aim to prove a norm inequality for a weighted averaging operator applied to $GM$ functions, which is the key result of our approach. The statement is as follows.

\begin{theorem}\label{THMaverageoperator}
	Let $0<q\leq \infty$. Let $g\in GM$ be real valued, vanishing at infinity, and such that $x^r g(x)\to 0$ as $x\to 0$ for some $r>0$.  Define $\varepsilon = \dfrac{1}{C^4 2^{6r\nu+8\nu+16}}$, assume $\varphi:\R_+\to [0,1]$ is supported on the interval $(0,1+\varepsilon/2)$, and that $\varphi(x)\equiv 1$ for $x\in (0,1)$. Let $w:\R_+\to \R_+$ be a weight satisfying $w(s)\asymp w(t)$ for all $s,t\in [x,2x]$ and $x>0$. Then
	\[
	\Vert g\Vert_{L^q(w)}\lesssim \Vert M\Phi_g \Vert_{L^q(w)}.
	\]	
\end{theorem}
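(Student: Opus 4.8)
The plan is to reduce the weighted inequality to the pointwise estimate $|g(t)|\lesssim M\Phi_g(t)$, with a constant depending only on the $GM$ parameters of $g$ (and on $\varphi$, $r$). Once this is available the theorem is immediate: since $M\Phi_g$ is nonincreasing and $w(s)\asymp w(t)$ whenever $s,t\in[x,2x]$, one integrates the $q$-th power of the pointwise bound (or takes a supremum when $q=\infty$) against $w$ and absorbs any bounded rescaling of the argument using the doubling of $w$, obtaining $\Vert g\Vert_{L^q(w)}\lesssim\Vert M\Phi_g\Vert_{L^q(w)}$ for every $0<q\le\infty$. A preliminary remark: the mere finiteness of $\Phi_g(t)$ forces $g\in L^1$ near $0$, so Lemma~\ref{LEMgmproperties}(iv) applies, and together with $x^rg(x)\to0$ as $x\to0$ and $g$ vanishing at infinity this legitimises the integrations by parts and the use of Lemma~\ref{LEMgmproperties}(ii)--(iii) below.

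To obtain the pointwise bound I would first extract $g(t)$ from the averages $\Phi_g$ at two nearby scales while cancelling the uncontrolled contribution of $g$ on $(0,t)$. Writing $\sigma=1+\varepsilon/2$ and using $\varphi\equiv1$ on $(0,1)$, the kernels defining $\Phi_g(t)$ and $\sigma\Phi_g(\sigma t)$ agree on $(0,t)$, so
\[
\Phi_g(t)-\sigma\,\Phi_g(\sigma t)=\frac1t\int_t^{\sigma^2 t}\Theta_t(u)\,g(u)\,du,\qquad \Theta_t(u):=\varphi(u/t)-\varphi(u/(\sigma t)),
\]
where $\Theta_t$ is supported in $[t,\sigma^2t]$, satisfies $|\Theta_t|\le1$, and $\frac1t\int_t^{\sigma^2 t}\Theta_t(u)\,du=-\frac\varepsilon2 c_\varphi$ for some $c_\varphi\in[1,\sigma]$. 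Subtracting $g(t)$ inside the integral gives
\[
\tfrac\varepsilon2 c_\varphi\,g(t)=-\big[\Phi_g(t)-\sigma\,\Phi_g(\sigma t)\big]+\frac1t\int_t^{\sigma^2 t}\Theta_t(u)\big(g(u)-g(t)\big)\,du ,
\]
and since $c_\varphi\ge1$, $M\Phi_g$ is nonincreasing, and $t,\sigma t\ge t$, this yields $\tfrac\varepsilon2|g(t)|\le 3\,M\Phi_g(t)+\tfrac1t\int_t^{\sigma^2t}|g(u)-g(t)|\,du$. The last integral is a purely local quantity: it is at most $(\sigma^2-1)\int_t^{\sigma^2t}|dg|\le2\varepsilon\int_t^{2t}|dg|$, and invoking the $GM$ inequality one reaches, for instance, $|g(t)|\lesssim\varepsilon^{-1}M\Phi_g(t)+C\int_{t/\lambda}^{\lambda t}|g(x)|\,dx/x$.

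The hard part is to show that this local error term can in fact be reabsorbed, and this is the only place where the hypothesis that $g$ be real-valued is used. The crude bound just obtained is not enough, because after one application of the $GM$ inequality the constant multiplying the local term is not small; the role of the explicit value $\varepsilon=(C^4 2^{6r\nu+8\nu+16})^{-1}$ is precisely to make a more delicate argument go through. The approach I would take is to iterate the identity above through dyadic scales and decompose each local total variation according to the sign of $g$ and to the positions of its local extrema — here real-valuedness is essential, since it forces the sign of $g$ to change only slowly on the scale of the $GM$ condition and hence prevents the averages $\Phi_g$ near $t$ from cancelling to a degree incompatible with $|g(t)|$ being large — so that each resulting piece is dominated either by a fixed multiple of $M\Phi_g$ at a comparable scale or by a strictly smaller multiple of $|g|$, the geometric series over scales converging exactly because $\varepsilon$ is this small. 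The boundary terms generated by the integrations by parts are controlled by feeding in the decay hypotheses $x^rg(x)\to0$ at $0$ and $g(x)\to0$ at infinity, which is where the exponents $r\nu$ and $\nu$ in $\varepsilon$ originate. Combining these pieces produces $|g(t)|\lesssim M\Phi_g(t)$ pointwise, and hence the theorem.
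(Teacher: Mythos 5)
Your opening reduction is where the argument breaks: the pointwise inequality $|g(t)|\lesssim M\Phi_g(t)$, with a constant depending only on the $GM$ parameters, $r$ and $\varphi$, is false, so no amount of iteration can establish it. A concrete counterexample: take $g=0$ on $(0,1)$, $g=\psi$ on $[1,2]$ with $\psi$ smooth, $|\psi|\le 1$ and $\int_1^2\psi=-e^{-2}$, and $g(x)=e^{-x}$ for $x>2$. This $g$ is real-valued, locally of bounded variation, $GM$ (any nonnegative nonincreasing tail is $GM$, and the piece near $1$ only affects the constants), vanishes at infinity and is zero near the origin. Since $\int_0^\infty g=0$, for every $x\ge 3$ one has $\int_0^x g=-e^{-x}$; hence with the admissible choice $\varphi=\chi_{(0,1)}$ one gets $M\Phi_g(t)=\sup_{x\ge t}e^{-x}/x=e^{-t}/t$, while $|g(t)|=e^{-t}$, so the ratio $|g(t)|/M\Phi_g(t)=t$ is unbounded. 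Even for a $\varphi$ with mass on $(1,1+\varepsilon/2)$ the ratio is at least of order $1/\varepsilon$, and more importantly the mechanism you propose for the ``hard part'' (decomposing local variations by sign and local extrema so that each piece is dominated by $M\Phi_g$ at a comparable scale or a small multiple of $|g|$) is only a hope, not an argument: your first step already produces the error term $C\int_{t/\lambda}^{\lambda t}|g(x)|\,dx/x$ with a constant of order the $GM$ constant, and you give no device that makes subsequent errors geometrically small. The example above shows such a device cannot exist at the pointwise level.

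The paper's proof is intrinsically normwise and organized differently. It classifies dyadic scales into \emph{good} and \emph{bad} numbers; at a good scale $n$, Lemma~\ref{LEMgood2-functions} produces an interval of definite measure on which $g$ keeps a fixed sign and $|g|\gtrsim A_n$ (this is where real-valuedness and the smallness of $\varepsilon$ enter, via a pigeonhole count of sign changes), yielding the lower bound $M\Phi_g(2^{n-\nu})\gtrsim A_n$ --- note the maximal function is evaluated at a point a factor $2^{\nu}$ to the \emph{left} of the block, which is harmless only because $w$ is doubling. At bad scales no pointwise control is obtained at all; instead Lemma~\ref{LEMseqbadnrs} and Remark~\ref{REMseqgoodnrs} attach each bad number to a good number through a chain of length $s$ along which $A_m\le 2^{-2rs\nu}A_n$, and the weighted contribution of bad blocks is absorbed because $2^{-2rs\nu q}$ beats the possible weight growth $B^{2\nu s}$, after enlarging $r$ (legitimately, since $x^{r}g(x)\to0$ persists for larger $r$) so that $r>q^{-1}\log_2 B$. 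This trade-off depends on $q$ and on the doubling constant of $w$, which is exactly why the statement cannot be reduced to a pointwise bound with ``bounded rescaling'' of the argument: the rescalings needed for bad numbers are unbounded. Finally, the decay hypotheses at $0$ and $\infty$ are used to guarantee that the chains of bad numbers terminate at a good number (Lemma~\ref{LEMseqbadnrs}), not to control boundary terms of integrations by parts --- there are none in the paper's proof of Theorem~\ref{THMaverageoperator}.
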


In order to prove Theorem~\ref{THMaverageoperator} we need some auxiliary results. From now on, we assume without loss of generality that the $GM$ constant $\lambda$ (see \eqref{EQgmdefinition}) equals $2^\nu$ for some $\nu\in \N$. Let us define, for any function $g\in GM$ and any  $n\in \Z$,
\begin{align*}
A_n&:=\sup_{2^n\leq t\leq 2^{n+1}}|g(t)|,\\
B_n&:=\sup_{2^{n-2\nu}\leq t\leq 2^{n+2\nu}}|g(t)|.
\end{align*}

Given $r>0$, for $n\in \Z$, we say that $n$ is a \textit{good} number if $B_{n}\leq 2^{2r\nu}A_{n}$. The rest of integer numbers consists of \textit{bad} numbers. Recall that the constant $\nu$ comes from the $GM$ condition. The parameter $r$ will be arbitrarily chosen at each point according to our convenience. In contrast with \cite{miohankelgm,DTgmlipschitz}, here we consider a slightly different definition of good numbers by incorporating the parameter $r>0$ (in the cited papers $r=2$ is fixed). The reason to do this is that every power function $x^\rho$ (which is a $GM$ function for any $\rho$) will have an infinite amount of good numbers if $r$ is chosen appropriately according to $\rho$. We give two examples illustrating this fact. On the one hand, if $g(x)=x^{-2}$, since
$$
A_n=\frac{1}{2^{2n}}, \qquad B_n= \frac{1}{2^{2n -4\nu}},
$$
then $B_n=2^{4\nu}A_n$, and all natural numbers $n$ (associated to $g$) are good (with $r=2$). On the other hand, if $g(x)=x^{-3}$, since
$$
A_n=\frac{1}{2^{3n}}, \qquad B_n= \frac{1}{2^{3n-6\nu}},
$$
then $B_n=2^{6\nu}A_n$, thus all natural numbers $n$ are good if $r=3$, and bad if $r=2$.
\begin{lemma}
	\label{LEMgood1-functions}
	Let $g$ be a $GM$ function. For any good number $n$, there holds
	\begin{equation}
	\label{EQgoodlemma1-functions}
	|E_n|:=\bigg| \bigg\{ x\in[2^{n-\nu},2^{n+\nu}]: |g(x)|>\frac{A_n}{C2^{2\nu+3}}\bigg\}\bigg|\geq   \frac{2^n}{C 2^{2r\nu+\nu+3}},
	\end{equation}
	where $C$ and $\nu$ are the constants from the $GM$ condition.
\end{lemma}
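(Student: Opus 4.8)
The plan is to exploit the definition of a good number, $B_n \le 2^{2r\nu} A_n$, together with the $GM$ condition to show that $|g|$ cannot be much smaller than $A_n$ on a substantial portion of $[2^{n-\nu}, 2^{n+\nu}]$. First I would pick a point $u_0 \in [2^n, 2^{n+1}]$ (so $u_0$ lies in the dyadic block defining $A_n$) where $|g(u_0)|$ is close to the supremum $A_n$, say $|g(u_0)| \ge A_n/2$. From here the strategy is a ``spreading'' argument: the total variation of $g$ near $u_0$ is controlled, via the $GM$ inequality \eqref{EQgmdefinition}, by an $L^1$-average of $|g|$ over a slightly larger interval, and that average is in turn controlled by $B_n$, hence by $2^{2r\nu} A_n$ because $n$ is good. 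So on a neighbourhood of $u_0$ whose length is a suitable fraction of $2^n$, the function $g$ cannot drop from $|g(u_0)| \approx A_n$ all the way down to $A_n/(C 2^{2\nu+3})$ — otherwise it would accumulate more variation than the $GM$ bound permits.

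More concretely, I would argue as follows. Using Lemma~\ref{LEMgmproperties}(ii) applied on the dyadic interval containing $u_0$, and the fact that the relevant larger interval $[u_0/\lambda, \lambda u_0]$ sits inside $[2^{n-2\nu}, 2^{n+2\nu}]$ (which holds once $\lambda = 2^\nu$), one gets $\int_{u_0/\lambda}^{\lambda u_0} |g(x)|/x\, dx \lesssim \log(\lambda^2)\, B_n \lesssim \nu\, 2^{2r\nu} A_n$ — but I would be more careful and instead bound the variation directly: for any $x$ in a small interval $I$ around $u_0$,
\[
|g(x)| \ge |g(u_0)| - \int_I |dg(t)| \ge \frac{A_n}{2} - \int_I |dg(t)|,
\]
and by \eqref{EQgmdefinition} (covering $I$ by $O(1)$ dyadic pieces) $\int_I |dg(t)| \lesssim C\, B_n \lesssim C\, 2^{2r\nu} A_n$. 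This naive bound is too weak on its own, so the key point is that one should choose $I$ to be only a small fraction $\delta\, 2^n$ of the dyadic block: splitting $[2^n, 2^{n+1}]$ into $\sim 1/\delta$ subintervals, the variation of $g$ over $[2^n,2^{n+1}]$ is $\le \int_{2^n}^{2^{n+1}}|dg| \lesssim C B_n$, so by averaging there is a subinterval $I$ of length $\delta 2^n$ with $\int_I |dg| \lesssim \delta C B_n \le \delta C 2^{2r\nu} A_n$. Choosing $\delta$ comparable to $1/(C 2^{2r\nu + 2\nu+3})$ — say $\delta = 1/(C 2^{2r\nu+\nu+3})$ after tracking constants — makes this last quantity at most, e.g., $A_n/(2 C 2^{2\nu+3})$, which forces $|g(x)| > A_n/(C 2^{2\nu+3})$ on all of $I$. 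Since $I \subset [2^n,2^{n+1}] \subset [2^{n-\nu}, 2^{n+\nu}]$ and $|I| = \delta 2^n$, this $I$ is contained in $E_n$ and gives the claimed lower bound $|E_n| \ge 2^n/(C 2^{2r\nu+\nu+3})$.

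The main obstacle, and the place requiring genuine care rather than routine estimation, is the bookkeeping of constants: one must choose the subinterval length $\delta\, 2^n$ precisely so that the variation bound $\delta C 2^{2r\nu} A_n$ beats the threshold $A_n/(C2^{2\nu+3})$ by the right margin (accounting for the factor $|g(u_0)|\ge A_n/2$), while simultaneously keeping $|I|$ large enough to yield exactly the stated measure bound — the $\nu$ in the exponent $2r\nu+\nu+3$ must come out correctly. A secondary technical point is justifying the covering of a short interval $I\subset[2^n,2^{n+1}]$ by dyadic intervals $[x,2x]$ so that \eqref{EQgmdefinition} applies: since $I$ has length $\delta 2^n \ll 2^n$, it is contained in a single interval $[2^n, 2^{n+1}]=[2^n, 2\cdot 2^n]$, so $\int_I |dg| \le \int_{2^n}^{2^{n+1}} |dg| \lesssim C B_n$ directly, and the averaging over $\sim 1/\delta$ translates of length $\delta 2^n$ is what produces the extra factor $\delta$. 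Once the constants are pinned down this way, the proof closes cleanly.
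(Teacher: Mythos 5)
Your argument has a genuine logical gap at its central step. You first fix $u_0\in[2^n,2^{n+1}]$ with $|g(u_0)|\geq A_n/2$, but the subinterval $I$ on which you control the variation is produced by an averaging/pigeonhole argument over the $\sim 1/\delta$ pieces of $[2^n,2^{n+1}]$, and nothing guarantees that this $I$ contains (or is anywhere near) $u_0$. The inequality $|g(x)|\geq |g(u_0)|-\int_I|dg(t)|$ for $x\in I$ is only valid when $u_0\in I$; for the $I$ you actually construct, $g$ could be essentially zero. Indeed, consider a $g$ that is below the threshold $A_n/(C2^{2\nu+3})$ on most of the block and spikes to $A_n$ only on a very short interval near $u_0$: all the variation is concentrated at the spike, the pigeonhole selects a quiet subinterval away from it, and your conclusion fails there. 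You cannot repair this by instead taking $I$ to be the piece containing $u_0$, because then your only bound on $\int_I|dg|$ is the full-block bound $\lesssim \nu C2^{2r\nu}A_n$, which is far larger than $A_n$ and too weak to propagate the largeness of $|g(u_0)|$. (The missing extra factor $2\nu\log 2$ from $\int_{2^{n-\nu}}^{2^{n+\nu}}|g(t)|/t\,dt\leq B_n\log 2^{4\nu}$ is a minor bookkeeping issue by comparison.)

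The paper closes exactly this gap by arguing by contradiction and using the assumed smallness of $|E_n|$ itself to control the right-hand side of the $GM$ inequality: if $|E_n|<2^n/(C2^{2r\nu+\nu+3})$, then splitting $\int_{2^{n-\nu}}^{2^{n+\nu}}|g(t)|/t\,dt$ over $E_n$ (where $|g|\leq B_n\leq 2^{2r\nu}A_n$ but the measure is tiny) and its complement (where $|g|$ is below the threshold) gives a bound $\leq A_n/(4C)$, whence $\int_{2^n}^{2^{n+1}}|dg|\leq A_n/4$ and so $|g|>A_n/2$ on the \emph{whole} block $[2^n,2^{n+1}]$ — contradicting the smallness of $E_n$. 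The key ingredient your proposal lacks is this use of the (assumed) smallness of $E_n$ — equivalently, a Chebyshev-type estimate on $\int|g|$ — to make the variation bound small compared with $A_n$; relying only on $B_n\leq 2^{2r\nu}A_n$ cannot do this, which is precisely what forced you into the pigeonhole step where the argument breaks.
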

\begin{proof}
The proof just consists on rewriting that of \cite{miohankelgm} in the context of functions, with the difference that in the mentioned work the parameter $r=2$ is fixed (see also \cite{DTgmlipschitz}, where this idea was originally carried out for sequences). Assume \eqref{EQgoodlemma1-functions}	does not hold for $n\in \Z$. Let us define $D_n:=[2^{n-\nu},2^{n+\nu}]\backslash E_n$. Then, since $n$ is good,
		\begin{align*}
		\int_{2^{n-\nu}}^{2^{n+\nu}} \frac{|g(x)|}{x}\, dx& =\int_{D_n} \frac{|g(x)|}{x}\, dx + \int_{E_n} \frac{|g(x)|}{x}\, dx\\
		&\leq \frac{2^{n+\nu}A_n}{C2^{n-\nu}2^{2\nu+3}}+\frac{2^n B_n}{C2^{n-\nu}2^{2r\nu+\nu+3}}= \frac{A_n}{8C}+\frac{B_n}{C 2^{2r\nu+3}}\leq \frac{A_n}{4C}.
		\end{align*}
		The $GM$ condition implies that for any $x\in [2^n,2^{n+1}]$, 
		$$
		|g(x)|\geq A_n-\int_{2^n}^{2^{n+1}}|dg(t)|\geq A_n -C\int_{2^{n-\nu}}^{2^{n+\nu}} \frac{|g(t)|}{t}\, dt \geq A_n - \frac{A_n}{4}>\frac{A_n}{2},
		$$
		which contradicts our assumption.
\end{proof}
Note that in particular, Lemma~\ref{LEMgood1-functions} implies that if $n$ is a good number, then $A_n>0$. Before stating the next lemma, let us introduce the following notation:
$$
E_n^+:=\{ x\in E_n : g(x)>0\}, \qquad E_n^-:=\{ x\in E_n: g(x)\leq 0\}.
$$
\begin{lemma}
	\label{LEMgood2-functions}
	Let $g$ be a real-valued $GM$ function. For any good number $n$ there is an interval $(\ell_n,m_n)\subset [2^{n-\nu},2^{n+\nu}]$ such that at least one of the following holds\textnormal{:}
	\begin{enumerate}
		\item for any $x\in (\ell_n,m_n)$, there holds $g(x)\geq 0$ and
		$$
		|E_n^+ \cap (\ell_n,m_n)|\geq \frac{2^n}{C^3 2^{4r\nu+5\nu+12}};
		$$
		\item for any  $x\in (\ell_n,m_n)$, there holds $g(x)\leq 0$ and
		$$
		|E_n^- \cap (\ell_n,m_n)|\geq \frac{2^n}{C^3 2^{4r\nu+5\nu+12}},
		$$
	\end{enumerate}
	where $C$ and $\nu$ are the constants from the $GM$ condition and $r$ is the parameter from the definition of good numbers.
\end{lemma}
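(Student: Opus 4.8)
The plan is to start from the set $E_n$ produced by Lemma~\ref{LEMgood1-functions}, which has measure at least $2^n/(C2^{2r\nu+\nu+3})$ and consists of points in $[2^{n-\nu},2^{n+\nu}]$ where $|g(x)|>A_n/(C2^{2\nu+3})$. Writing $E_n = E_n^+ \cup E_n^-$, at least one of these two pieces — say $E_n^+$, to fix ideas — carries half the measure, i.e. $|E_n^+| \geq 2^n/(C2^{2r\nu+\nu+4})$. The goal is to find a \emph{subinterval} $(\ell_n,m_n) \subset [2^{n-\nu},2^{n+\nu}]$ on which $g$ is single-signed (here $\geq 0$) and which still contains a large portion of $E_n^+$. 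The reason one cannot just take the interval to be all of $[2^{n-\nu},2^{n+\nu}]$ is precisely that $g$ may change sign there; the content of the lemma is that the sign changes cannot be too wild, because each sign change forces variation, and variation is controlled by the $GM$ condition.

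First I would quantify the total variation of $g$ over $[2^{n-\nu},2^{n+\nu}]$. Covering this range by the $2\nu$ dyadic blocks $[2^j,2^{j+1}]$ with $n-\nu \leq j < n+\nu$ and applying the $GM$ condition on each, together with Lemma~\ref{LEMgmproperties}(ii) to bound $|g|$ on each block by the relevant logarithmic average, one gets
\[
\int_{2^{n-\nu}}^{2^{n+\nu}} |dg(t)| \;\lesssim\; C\,B_n \;\leq\; C\,2^{2r\nu}A_n,
\]
using goodness of $n$ in the last step. Next, consider the level $\tau := A_n/(2C2^{2\nu+3})$, i.e.\ half the threshold defining $E_n$, so every point of $E_n^+$ has $g > 2\tau$. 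The idea is that maximal open intervals on which $g > \tau$ are pairwise disjoint subintervals of $[2^{n-\nu},2^{n+\nu}]$ on which $g$ is (strictly, hence) nonnegative, and on each such interval $E_n^+$'s trace is captured. One then argues that there cannot be too many such intervals: to exit one and not be in another, $g$ must drop from above $2\tau$ (on $E_n^+$) down to $\tau$ and back up, each excursion costing at least $\tau$ in variation (in fact going from $2\tau$ down through $\tau$ and up again costs $\gtrsim \tau$ per ``gap''). Hence the number $N$ of relevant intervals satisfies $N \cdot \tau \lesssim \int |dg| \lesssim C2^{2r\nu}A_n$, which gives $N \lesssim C^2 2^{4r\nu + 2\nu + 4}$ (the constant is whatever it is; matching it exactly to $C^3 2^{4r\nu+5\nu+12}$ is a bookkeeping matter with room to spare). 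Since the total mass $|E_n^+| \geq 2^n/(C2^{2r\nu+\nu+4})$ is distributed among at most $N$ of these intervals, by pigeonhole one of them, call it $(\ell_n,m_n)$, satisfies
\[
|E_n^+ \cap (\ell_n,m_n)| \;\geq\; \frac{|E_n^+|}{N} \;\geq\; \frac{2^n}{C^3 2^{4r\nu+5\nu+12}},
\]
and on $(\ell_n,m_n)$ we have $g \geq \tau > 0 \geq 0$ by construction, so conclusion (1) holds. If instead $|E_n^-|$ was the larger half, the same argument with intervals on which $g < -\tau$ (hence $g \leq 0$) yields conclusion (2).

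The main obstacle I expect is making the ``counting sign changes via total variation'' step fully rigorous for an arbitrary function of bounded variation rather than a nice piecewise-monotone one: the superlevel set $\{g > \tau\}$ is open and is a countable union of intervals, and one must be careful that (a) on each component $g$ is genuinely $\geq \tau$ — true since $g$ is continuous enough / of bounded variation, using that $g$ has only jump and continuous parts and at an endpoint the one-sided limit is $\leq \tau$ — and (b) the ``cost'' argument correctly lower-bounds $\int|dg|$ by $N\tau$ rather than something smaller; this requires pairing each component of $\{g>\tau\}$ containing a point of $E_n^+$ with a descent to level $\tau$ on one side, and checking these descents are disjoint. A clean way around potential pathologies is to discard components of $\{g>\tau\}$ that contain no point of $E_n^+$ (they contribute nothing to the mass we care about) and only count the ``useful'' ones; each useful component has a point where $g>2\tau$, and between two consecutive useful components the infimum of $g$ is $\leq \tau$, so $g$ oscillates by at least $\tau$, giving the bound. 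The other, purely clerical, difficulty is chasing the explicit constants so that the final denominator is literally $C^3 2^{4r\nu+5\nu+12}$ — but since every inequality above has slack, one simply enlarges the exponent as needed, which is presumably how the stated constant was obtained.
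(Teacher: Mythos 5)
Your argument is correct, and its skeleton is the same as the paper's: bound the total variation of $g$ over $[2^{n-\nu},2^{n+\nu}]$ by $\lesssim 2^{2r\nu+2\nu}A_n$ via the $GM$ condition plus goodness, split $[2^{n-\nu},2^{n+\nu}]$ into boundedly many intervals on which $g$ keeps one sign, and pigeonhole the mass of $E_n^\pm$. Where you differ is the decomposition mechanism: the paper builds the intervals greedily, starting at $\inf E_n^+$ and running to the first point where $g\leq 0$, and it must attach an artificial buffer of length $\varepsilon 2^n$ to each interval so that the construction terminates, then trim the buffer from the selected interval at the end (this is why the paper's constant appears in two stages, $2^{4r\nu+5\nu+11}$ before trimming and $2^{4r\nu+5\nu+12}$ after). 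Your superlevel-set variant at the half-threshold $\tau=A_n/(2C2^{2\nu+3})$ avoids that bookkeeping entirely: each ``useful'' component of $\{g>\tau\}$ (intersected with $[2^{n-\nu},2^{n+\nu}]$, so that $(\ell_n,m_n)$ lands in the required interval) contains a point of $E_n^+$ where $g>2\tau$, separating points between distinct components have $g\leq\tau$, so each gap costs $\geq\tau$ in variation and $N\leq \mathrm{Var}/\tau+1\leq C^2 2^{2r\nu+4\nu+8}$; combined with $|E_n^+|\geq 2^n/(C2^{2r\nu+\nu+4})$ this reproduces exactly the stated constant $C^3 2^{4r\nu+5\nu+12}$, with no trimming needed. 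Your interim exponent $C^2 2^{4r\nu+2\nu+4}$ for $N$ is off (it should carry $2r\nu$, not $4r\nu$, from the variation bound), but as you say this is bookkeeping and the correct computation closes with the paper's constant. Also, your worry (a) about BV pathologies is moot: a connected component of $\{g>\tau\}$ is by definition a subset of $\{g>\tau\}$, so $g>\tau>0$ holds at every point of its interior without any continuity considerations, and finiteness of the number of useful components follows from the same variation count rather than from a minimum-length device.
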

	\begin{proof}
		On the first place, by Lemma~\ref{LEMgood1-functions}, either $|E_n^+|\geq \dfrac{2^n}{C2^{2r\nu+\nu+4}}$ or $|E_n^-|\geq \dfrac{2^n}{C2^{2r\nu+\nu+4}}$. We assume the former, and prove that item 1. holds.
		
		Let us construct a system of disjoint intervals $\{I_j=[s_j,t_j]\}_{j=1}^{p_n}$ in $\big[ 2^{n-\nu},2^{n+\nu}+\varepsilon 2^n\big]$ (where $\varepsilon<1$ will be conveniently chosen later) as follows: Let $s_1=\inf E_n^+$, and
		$$
		\tau_1=\inf\{x\in [s_1,2^{n+\nu}]: g(x)\leq 0\}.
		$$
		If such $\tau_1$ does not exist, then we simply let $t_1=2^{n+\nu}$ and the conclusion follows with $(\ell_n,m_n)=(s_1,t_1)$.	Contrarily, we define
		$$
		t_1=\tau_1+\varepsilon 2^n.
		$$
		Once we have the first interval $I_1=[s_1,t_1]$, if $|E_n^+\backslash I_1|>0$, we let $s_2=\inf E_n^+\backslash I_1$, and define $\tau_2$ similarly as above, thus obtaining a new interval $I_2=[s_2,t_2]$. We continue this process until our collection of intervals is such that
		$$
		|E_n^+\backslash (I_1\cup I_2\cup \cdots \cup I_{p_n})|=0.
		$$ 
				
		By construction, for any $1\leq j\leq p_n-1$, we can find $y_j\in [s_j,\tau_j]$ such that $y_j\in E_n^+$, and $z_j\in [\tau_j,t_j]$ such that $g(z_j)\leq 0$. Thus,
		$$
		\int_{I_j}|dg(t)|=\int_{s_j}^{t_j}|dg(t)|\geq g(y_j)-g(z_j)\geq g(y_j)>\frac{A_n}{C2^{2\nu+3}}.
		$$
		Hence,
		$$
		\int_{2^{n-\nu}}^{2^{n+\nu}}|dg(t)|\geq \sum_{j=1}^{p_n-1}\int_{I_j}|dg(t)|\geq (p_n-1)\frac{A_n}{C2^{2\nu+3}}.
		$$
		On the other hand, the $GM$ property and the fact that $n$ is good imply that
		\begin{align*}
		\int_{2^{n-\nu}}^{2^{n+\nu}} |dg(t)|&\leq C2\nu \int_{2^{n-2\nu}}^{2^{n+2\nu}} \frac{|g(x)|}{x}\, dx \leq C2\nu B_n\int_{2^{n-2\nu}}^{2^{n+2\nu}} \frac{1}{x}\, dx\\
		&= C2\nu B_n \log 2^{4\nu} \leq C2^{2r\nu}8\nu^2 A_n\log 2  \leq C 2^{2r\nu+2\nu+3}A_n.
		\end{align*}		
		We can deduce from the above estimates that
		$$
		p_n \leq C^2 2^{2r\nu+4\nu+6} +1 \leq C^2 2^{2r\nu+4\nu+7}.
		$$
		By the pigeonhole principle (or Dirichlet's box principle), there is an integer $j$ such that 
		$$
		|E_n^+ \cap I_j|\geq \frac{2^n}{C^3 2^{4r\nu+5\nu+11}}.
		$$
		Given this $j$, we set $\varepsilon=\dfrac{1}{C^3 2^{4r\nu+5\nu+12}}$ and $(\ell_n,m_n)=(s_j,t_j-\varepsilon 2^n)=(s_j,\tau_j)\subset [2^{n-\nu},2^{n+\nu}]$, and the result follows.
	\end{proof}

Concerning bad numbers, we have the following result.

\begin{lemma}\label{LEMseqbadnrs}
	Let $g\in GM$ be vanishing at infinity and such that $x^{r_0}g(x)\to 0$ as $x\to 0$ for some $0\leq r_0\leq r$. Then, for every bad number $m\in \Z$ there exists either a finite sequence
	\begin{equation}
	\label{EQseq-}
	m=\gamma_0>\gamma_1>\cdots >\gamma_s:=\gamma_{m,s},
	\end{equation}
	or
	\begin{equation}
	\label{EQseq+}
	m=\gamma_0<\gamma_1<\cdots <\gamma_s:=\gamma_{m,s},
	\end{equation}
	such that $\gamma_0,\gamma_1,\ldots ,\gamma_{s-1}$ are bad, $\gamma_s$ is good, and the inequalities
	\[
	 A_{\gamma_j}<2^{-2r\nu}A_{\gamma_{j+1}},  \qquad |\gamma_j-\gamma_{j+1}|\leq 2\nu,
	\]
	hold for every $0\leq j\leq s-1$. In particular, there are infinitely many good numbers associated to $g$.
\end{lemma}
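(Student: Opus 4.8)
The plan is to start from a bad number $m$ and iteratively walk toward a good number, using the failure of the good-number inequality $B_n\le 2^{2r\nu}A_n$ at each bad step to force the quantities $A_n$ to grow (when walking in one direction) or shrink (when walking in the other), so that the two growth hypotheses on $g$—vanishing at infinity and $x^{r_0}g(x)\to 0$ as $x\to 0$—eventually rule out an infinite walk. Concretely, if $m$ is bad then $B_m>2^{2r\nu}A_m$, and since $B_m=\sup_{2^{m-2\nu}\le t\le 2^{m+2\nu}}|g(t)|$, the supremum defining $B_m$ is attained (or nearly attained) on some dyadic block $[2^{n-\nu},2^{n+\nu}]$ with $|n-m|\le 2\nu$; more precisely $B_m\le 2\max\{A_{m-2\nu},\dots,A_{m+2\nu}\}$, so there is an index $\gamma_1$ with $|\gamma_1-m|\le 2\nu$ and $A_{\gamma_1}\ge \tfrac12 B_m> 2^{2r\nu-1}A_m$, hence (absorbing the factor $2$ by a harmless adjustment, e.g.\ replacing $2r\nu$ by $2r\nu-1$ throughout or noting the paper's constants have slack) $A_m<2^{-2r\nu}A_{\gamma_1}$ and $|\gamma_0-\gamma_1|\le 2\nu$. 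Set $\gamma_0=m$. If $\gamma_1$ is good we stop with $s=1$; otherwise we repeat the argument starting from $\gamma_1$, obtaining $\gamma_2$ with $A_{\gamma_1}<2^{-2r\nu}A_{\gamma_2}$ and $|\gamma_1-\gamma_2|\le 2\nu$, and so on.

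The key point is that this process must terminate at a good number, and moreover that it can be steered to be monotone in the index. Monotonicity: I would argue that the new index $\gamma_{j+1}$ can always be chosen on the \emph{same side} of $\gamma_j$ as $\gamma_j$ is of $\gamma_{j-1}$ (or, at the first step, we fix a side and commit to it), because the index realizing the large value $B_{\gamma_j}$ is at dyadic distance $\le 2\nu$, and a case analysis shows that if it lay on the opposite side we would already have $A_{\gamma_{j-1}}$ comparable to $A_{\gamma_{j+1}}$, which combined with the chain of strict inequalities is contradictory—this yields either the decreasing chain \eqref{EQseq-} or the increasing chain \eqref{EQseq+}. Termination: along the chain the values $A_{\gamma_j}$ satisfy $A_{\gamma_0}<2^{-2r\nu}A_{\gamma_1}<2^{-4r\nu}A_{\gamma_2}<\cdots<2^{-2r\nu j}A_{\gamma_j}$, i.e.\ $A_{\gamma_j}>2^{2r\nu j}A_{\gamma_0}$, while $|\gamma_j-\gamma_0|\le 2\nu j$, so $\gamma_j\to\pm\infty$. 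If the chain were increasing, $\gamma_j\to+\infty$, and $A_{\gamma_j}=\sup_{[2^{\gamma_j},2^{\gamma_j+1}]}|g|\to 0$ because $g$ vanishes at infinity—but $A_{\gamma_j}$ is increasing and $A_{\gamma_0}>0$ (note $A_m>0$ since $B_m>2^{2r\nu}A_m\ge 0$ forces $B_m>0$, and if $A_m=0$ one can take a small side-step to a block where $g$ is nonzero; I would handle this degenerate sub-case separately), a contradiction. If the chain were decreasing, $\gamma_j\to-\infty$, i.e.\ $2^{\gamma_j}\to 0$, and here I use $x^{r_0}g(x)\to 0$: we have $A_{\gamma_j}\le \sup_{2^{\gamma_j}\le x\le 2^{\gamma_j+1}}|g(x)| \lesssim 2^{-r_0\gamma_j}\sup_{x}x^{r_0}|g(x)|\cdot$(on that block) $\to 0$ provided $r_0>0$; combined with $A_{\gamma_j}>2^{2r\nu j}A_{\gamma_0}\ge A_{\gamma_0}>0$ this is again a contradiction, unless $r_0=0$, in which case $x^{0}g(x)=g(x)\to 0$ as $x\to0$ gives the same conclusion directly. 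Hence the walk stops after finitely many steps at a good $\gamma_s$, with $\gamma_0,\dots,\gamma_{s-1}$ bad by construction and all the stated inequalities holding.

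The final assertion—that there are infinitely many good numbers—follows because if there were only finitely many, then all sufficiently large $|m|$ would be bad; pick $m$ with $|m|$ larger than the largest $|$good number$|$ plus $2\nu s$ for the relevant bound on $s$ (or simply: the terminal good number $\gamma_s$ of the chain starting at $m$ satisfies $|\gamma_s-m|\le 2\nu s$ and $A_{\gamma_s}>2^{2r\nu s}A_m$, which for arbitrarily large bad $m$ produces good numbers of arbitrarily large modulus, or, if we had fixed a side so that good numbers were bounded on that side, a direct contradiction), contradicting finiteness. I expect the main obstacle to be the \textbf{monotonicity of the index sequence}: the good/bad mechanism naturally gives a neighboring index at distance $\le 2\nu$ but not obviously on a prescribed side, so one must do the careful case analysis showing that backtracking is incompatible with the multiplicative gap $A_{\gamma_j}<2^{-2r\nu}A_{\gamma_{j+1}}$ (in particular that once a direction is chosen at the first step, it is forced thereafter), and to do this cleanly one wants the $GM$ constant $\lambda=2^\nu$ already normalized, which the paper has arranged. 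The bookkeeping of the factor-$2$ losses in $B_m\le 2\max_j A_{m-2\nu+j}$ versus the clean exponent $2^{-2r\nu}$ is routine given the slack in the constants but should be stated with care.
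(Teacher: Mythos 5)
Your termination argument for the decreasing chain is where the proof breaks. When $\gamma_j\to-\infty$ you claim $A_{\gamma_j}\to 0$ from $x^{r_0}g(x)\to 0$, but the bound you write, $A_{\gamma_j}\le 2^{-r_0\gamma_j}\sup_{2^{\gamma_j}\le x\le 2^{\gamma_j+1}}x^{r_0}|g(x)|$, contains the factor $2^{-r_0\gamma_j}\to\infty$ precisely when $r_0>0$, so it yields no decay; indeed the hypothesis allows $|g|$ to blow up at the origin (take $g(x)=x^{-r_0/2}$ near $0$), in which case $A_{\gamma_j}\to\infty$ and your intended contradiction with $A_{\gamma_j}>A_{\gamma_0}>0$ never materializes. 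The correct contradiction is quantitative and is exactly where the hypothesis $r_0\le r$ (which you state but never use) enters: along the chain $A_{\gamma_j}>2^{2r\nu(j-1)}A_{\gamma_1}$ while $\gamma_j\ge\gamma_0-2\nu j$, hence
\[
2^{r_0\gamma_j}A_{\gamma_j}>2^{r_0\gamma_0-2r_0 j\nu+2r\nu(j-1)}A_{\gamma_1}=2^{r_0\gamma_0-2r\nu}A_{\gamma_1}\,2^{2j\nu(r-r_0)}\ge 2^{r_0\gamma_0-2r\nu}A_{\gamma_1}>0,
\]
so $x^{r_0}g(x)$ stays bounded away from $0$ along points tending to $0$, contradicting the hypothesis. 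In other words, the geometric gain $2^{2r\nu}$ per step must be played off against the loss of at most $2\nu$ in the index per step; asserting plain convergence $A_{\gamma_j}\to 0$ is false. (Your $r_0=0$ case and the increasing chain, where vanishing at infinity does give $A_{\gamma_j}\to 0$, are fine.)

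Two smaller points. First, the factor-$2$ loss is unnecessary and would cost you the stated constant: since $[2^{m-2\nu},2^{m+2\nu}]$ is exactly the union of the dyadic blocks $[2^n,2^{n+1}]$ with $n\in[m-2\nu,m+2\nu)\cap\Z$, one has $B_m=\max_n A_n$ over that window, so there is $\gamma_1$ with $A_{\gamma_1}=B_m$ and $A_m<2^{-2r\nu}A_{\gamma_1}$ exactly; your proposed replacement of $2r\nu$ by $2r\nu-1$ would prove a weaker inequality than the lemma asserts. Second, the monotonicity of the index sequence should be argued as in the paper: if $\gamma_1<m$ and $\gamma_1$ is bad, any index $\gamma>\gamma_1$ with $A_\gamma=B_{\gamma_1}$ still lies in the window $[m-2\nu,m+2\nu)$, whence $A_\gamma\le B_m=A_{\gamma_1}$, contradicting $A_\gamma=B_{\gamma_1}>2^{2r\nu}A_{\gamma_1}$; so the direction chosen (or rather forced) at the first step persists, giving \eqref{EQseq-} or \eqref{EQseq+}. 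Your sketch gestures at this but the "comparability of $A_{\gamma_{j-1}}$ and $A_{\gamma_{j+1}}$" is not the operative point; the containment in the previous window is.
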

\begin{proof}
	Let $m\in \Z$ be a bad number. Then $A_m<2^{-2r\nu}B_m$, and we can find $\gamma\in \Z$ satisfying $B_m=A_\gamma$ and $|m-\gamma|\leq 2\nu$. Let
	\[
	\gamma_1=\min\{\gamma\in [m-2\nu,m+2\nu)\cap \Z:A_\gamma=B_m\}.
	\]
	We now have two possibilities, either $\gamma_1<m$, or $\gamma_1>m$. Assume first $\gamma_1<m$. Then either $\gamma_1$ is a good number, or there exists $\gamma$ satisfying $|\gamma_1-\gamma|\leq 2\nu$ for which $B_{\gamma_1}=A_\gamma$. Note that in this case $\gamma<\gamma_1$, otherwise we arrive at a contradiction. Set
	\[
	\gamma_2=\min\{\gamma\in [\gamma_1-2\nu,\gamma_1)\cap\Z: A_\gamma=B_{\gamma_1}    \}.
	\]
	Continuing this procedure, we can prove that we eventually find a good number $\gamma_s$, so that the sequence
	\[
	m=\gamma_0>\gamma_1>\cdots >\gamma_s
	\]
	is such that $\gamma_0,\ldots ,\gamma_{s-1}$ are bad numbers, and $\gamma_j-\gamma_{j+1}\leq 2\nu$ for $0\leq j\leq s-1$. Indeed, if we could not find such a $\gamma_s$, then there would exist an infinite sequence of bad numbers
	\[
	\gamma_0>\gamma_1>\cdots >\gamma_s>\gamma_{s+1}>\cdots,
	\]
	so that $\gamma_{j-1}-\gamma_j\leq 2\nu$ and
	\begin{equation}
	\label{EQbadnrs}
	0<2^{-2r\nu}A_{\gamma_1}<2^{-4r\nu}A_{\gamma_2}<\cdots <2^{-2rj\nu}A_{\gamma_j},
	\end{equation}
	for all $j\geq 1$. Now, note that
	\begin{align*}
	r_0\gamma_j&\geq r_0\gamma_0-2r_0j\nu. 
	\end{align*}
	Combining this with \eqref{EQbadnrs}, we obtain, since $0\leq r_0\leq r$,
	\[
	2^{r_0\gamma_j}A_{\gamma_j}>2^{r_0\gamma_0-2r_0j\nu+2r\nu(j-1)}A_{\gamma_1}=2^{r_0\gamma_0-2r\nu}A_{\gamma_1}2^{2j\nu(r-r_0)}\geq 2^{r\gamma_0-2r\nu}A_{\gamma_1}>0.
	\]
	Letting $j\to \infty$, we find that $x^{r_0}g(x)\not \to 0$ as $x\to 0$, which contradicts our hypotheses. This concludes the part of the proof corresponding to the case $\gamma_1<m$. Let us now assume $\gamma_1>m$. Then either $\gamma_1$ is good, or there exists $\gamma>\gamma_1$ such that $\gamma-\gamma_1\leq 2\nu-1$, and $A_{\gamma_1}<2^{-2r\nu}B_{\gamma_1}=2^{-2r\nu}A_{\gamma}$ (the case $\gamma<\gamma_1$ is not possible, as it leads to a contradiction). We now define
	\[
	\gamma_2=\max\{ \gamma\in (\gamma_1,\gamma_1+2\nu)\cap \Z: A_\gamma=B_{\gamma_1}\},
	\]
	and similarly as above, we can continue this procedure and obtain a finite sequence
	\begin{equation}
	\label{EQbadnrs2}
	m=\gamma_0<\gamma_1<\cdots<\gamma_s,
	\end{equation}
	where the numbers $\gamma_0,\ldots , \gamma_{s-1}$ are bad, $\gamma_s$ is good, and moreover,
	\[
	\gamma_{j+1}-\gamma_j\leq 2\nu-1, \qquad A_{\gamma_j}<2^{-2r\nu}A_{\gamma_{j+1}},
	\]
	for all $0\leq j\leq s-1$. If we could not find the finite sequence from \eqref{EQbadnrs2}, then there would exist an infinite sequence of bad numbers
	\[
	\gamma_0<\gamma_1<\cdots<\gamma_s<\gamma_{s+1}<\cdots,
	\]
	and we would obtain
	\[
	A_{\gamma_{s_2}}>2^{2r(s_2-s_1)\nu}A_{\gamma_{s_1}}, \qquad  s_1,s_2\geq 0,
	\]
	thus contradicting the hypothesis that $g$ vanishes at infinity.
\end{proof}

Note that in the proof Lemma~\ref{LEMseqbadnrs}, for any bad number $m$, the number $\gamma_{m,s}$ obtained in \eqref{EQseq-} or \eqref{EQseq+} is uniquely determined. The natural number $s$ will be called the \textit{length} of the bad number $m$. We also define the sets
\begin{align*}
Q_n^1&:=\{m\in \Z: m\text{ is a bad number and } \eqref{EQseq-}\text{ holds with }\gamma_{m,s}=n\},\\
Q_n^2&:=\{m\in \Z: m\text{ is a bad number and } \eqref{EQseq+}\text{ holds with }\gamma_{m,s}=n\},
\end{align*}
and note that $Q_{n_1}^j\cap Q_{n_2}^k=\emptyset$ for every $n_1,n_2\in \Z$ and $j,k=1,2$. Moreover, if we denote by $G$ the set of good numbers, one has
\[
\Z=G\cup\bigg(\bigcup_{n\in G}Q_n^1\bigg)\cup\bigg(\bigcup_{n\in G}Q_n^2\bigg),
\]
where all the unions are disjoint.

\begin{remark}\label{REMseqgoodnrs}
	For any good number $n$ and any $s\in \N$, each of the sets $Q_n^j$, $j=1,2$, contain at most $(2\nu)^s$ bad numbers of length $s$. Let us discuss the case $j=1$ (the case $j=2$ is analogous). Indeed, if $m\in Q_n^1$ is a bad number  such that the construction \eqref{EQbadnrs} yields $\gamma_{m,1}=n$, then necessarily $m\in [n-2\nu,n)\cap \Z$, so that there are at most  $2\nu$ bad numbers of length $1$ in $Q_n^1$. If $m$ is a bad number such that the construction \eqref{EQbadnrs} yields $\gamma_{m,2}=n$, we should count all possible choices of $\gamma_1,\gamma_2$ satisfying
	\begin{equation}
	\label{EQauxbadnrs}
	m>\gamma_1>\gamma_2=\gamma_{m,2}.
	\end{equation}
	We know that there are at most $2\nu$ possible choices of $\gamma_1$, and that $\gamma_2\in [\gamma_1-2\nu,\gamma_1)\cap\Z$, so that there are at most $(2\nu)^2$ possible choices $\gamma_1,\gamma_2$ satisfying \eqref{EQauxbadnrs}. Continuing the argument inductively proves our claim. 
\end{remark}

We are now in a position to prove Theorem~\ref{THMaverageoperator}.

\begin{proof}[Proof of Theorem~\ref{THMaverageoperator}]
	We first prove that for any good number $n$, the inequality $M\Phi_g(2^{n-\nu})\gtrsim A_n$ holds. Indeed, let $(\ell_n,m_n)\subset [2^{n-\nu},2^{n+\nu}]$ be the interval obtained from Lemma~\ref{LEMgood2-functions}. Then
	\begin{align*}
	M\Phi_g(2^{n-\nu})&=\sup_{x\geq 2^{n-\nu}}\bigg| \frac{1}{x}\int_0^x g(u)\, du + \frac{1}{x} \int_x^{x(1+\varepsilon)} \varphi\Big(\frac{u}{x}\Big)g(u)\, du \bigg|\\
	&\geq \frac{1}{2}\bigg|\frac{1}{m_n}\int_0^{\ell_n} g(u)\, du+ \frac{1}{m_n}\int_{\ell_n}^{\ell_n(1+\varepsilon)}\varphi\Big(\frac{u}{\ell_n}\Big)g(u)\, du\bigg|\\
	&\phantom{=}+ \frac{1}{2}\bigg|\frac{1}{m_n}\int_0^{m_n} g(u)\, du+ \frac{1}{m_n}\int_{m_n}^{m_n(1+\varepsilon)}\varphi\Big(\frac{u}{m_n}\Big)g(u)\, du\bigg|\\
	&\geq \frac{1}{2 m_n}\bigg|\int_{\ell_n}^{m_n} g(u)\, du +\int_{m_n}^{m_n(1+\varepsilon)}\varphi\Big(\frac{u}{m_n}\Big) g(u)\, du- \int_{\ell_n}^{\ell_n(1+\varepsilon)}\varphi\Big(\frac{u}{\ell_n}\Big)g(u)\, du\bigg|\\
	&\geq \frac{1}{2 m_n}\bigg| \int_{\ell_n}^{m_n} g(u)\, du\bigg|- \frac{1}{2m_n}\bigg( \int_{\ell_n}^{\ell_n(1+\varepsilon)} |g(u)|\, du+ \int_{m_n}^{m_n(1+\varepsilon)} |g(u)|\, du\bigg)\\
	&\geq \frac{1}{2 m_n}\bigg| \int_{\ell_n}^{m_n} g(u)\, du\bigg| - \frac{\varepsilon}{2} B_n \geq \frac{1}{2 m_n}\bigg| \int_{\ell_n}^{m_n} g(u)\, du\bigg|  -2^{2r\nu-1}\varepsilon A_n.
	\end{align*}
	By Lemma~\ref{LEMgood2-functions}, we have
	\[
	\frac{1}{2 m_n}\bigg| \int_{\ell_n}^{m_n} g(u)\, du\bigg|> \frac{A_n}{C^4 2^{4r\nu+8\nu+16}},
	\]
	and thus, by the choice of $\varepsilon$, we obtain
	\begin{equation}
	\label{EQestgoodnur}
	M\Phi_g(2^{n-\nu})> \frac{A_n}{C^4 2^{4r\nu+8\nu+16}}-2^{2r\nu-1}\varepsilon A_n=\frac{A_n}{C^4 2^{4r\nu+8\nu+17}},
	\end{equation}
	valid for any good number $n$. Let us now consider two subcases, namely $0<q<\infty$ and $q=\infty$. Let $G\subset \Z$ denote the set of good numbers associated to $g$ and $W(x)=\sup_{t\in [x,2x]}w(t)$. Then, for the case $0<q<\infty$,
	\begin{align*}
	\Vert g\Vert_{L^q(w)}^q&=\int_0^\infty w(t)|g(t)|^q\,dt= \sum_{n\in \Z}\int_{2^n}^{2^{n+1}}w(t)|g(t)|^q\,dt  \\
	&\lesssim \sum_{n\in \Z}W(2^n)A_n^q=\sum_{n\in G}W(2^n)A_n^q+\sum_{n\in G}\sum_{m\in Q_n^1\cup Q_n^2}W(2^n)A_n^q=:S_1+S_2.
	\end{align*}
	On the one hand, by \eqref{EQestgoodnur}
	\[
	S_1\lesssim \sum_{n\in G}W(2^n)M\Phi_g(2^{n-\nu})^q\lesssim \Vert M\Phi_g\Vert_{L^q(w)}^q,
	\]
	where the last inequality follows from the fact that $M\Phi_g$ is nonincreasing. On the other hand, in order to estimate $S_2$, let us first observe that there exists $B>0$ such that for every $k,m\in \N$,
	\[
	W(2^{m})\leq B\cdot W(2^{m\pm 1})\leq \cdots \leq B^{k}W(2^{m\pm k}).
	\] 
	Now, for any bad number $m\in Q_n^1$ of length $s$, it follows from the inequalities $m\leq n+2\nu s$ and $A_m<2^{-2rs\nu}A_n$ (cf. Lemma~\ref{LEMseqbadnrs}) that
	\begin{align*}
	W(2^m)A_m^q < W(2^m) A_n^q  2^{-2rs\nu q} \leq B^{2\nu s}2^{-2rs\nu q} W(2^n)A_n^q =2^{2\nu s(\log_2 B-rq)} W(2^n)A_n^q,
	\end{align*}
	and similarly, for any bad number $m\in Q_n^2$, it follows from the inequalities $n \leq m+2\nu s$ and $A_m<2^{-2rs\nu}A_n$ that
	\[
	W(2^m)A_m^q <2^{2\nu s(\log_2 B-rq)} W(2^n)A_n^q.
	\]
	From now on, we now assume without loss of generality that $r>q^{-1}\log_2 B$. By  Remark~\ref{REMseqgoodnrs}
	\begin{align*}
	S_2&= \sum_{n\in G}\sum_{m\in Q_n^1\cup Q_n^2} W(2^m)A_m^q < 2\sum_{n\in G}W(2^n)A_n^q \sum_{s=1}^\infty (2\nu)^s 2^{2\nu s(\log_2 B-rq)} \\
	&\lesssim \sum_{n\in G}W(2^n)A_n^q\lesssim \Vert M\Phi_g\Vert_{L^q(w)}^q,
	\end{align*}
	which concludes the proof of the case $0<q<\infty$. For the case $q=\infty$, the proof is similar. First of all, note that
	\[
	\sup_{\substack{n\in \Z\\ n\in G}}W(2^n)A_n\lesssim \sup_{\substack{n\in \Z\\ n\in G}}W(2^n) M\Phi_g(2^{n-\nu})\asymp  \Vert M\Phi_g\Vert_{L^{\infty}(w)}.
	\]
	Further, for any bad number $m\in Q_n^1$ of length $s$, it follows from the inequalities $m\leq n+2\nu s$ and $A_m<2^{-2rs\nu}A_n$ that
	\[
	W(2^m)A_m < W(2^m) A_n  2^{-2rs\nu q} \leq 2^{2\nu s(\log_2 B-rq)} W(2^n)A_n \lesssim  \Vert M\Phi_g\Vert_{L^{\infty}(w)}.
	\]
	Finally, if $m\in Q_n^2$ has length $s$, it follows from the inequalities $n\leq m+2s\nu$ and $A_m<2^{-2rs\nu}A_n$ that
	\[
	W(2^m)A_m<2^{2\nu s(\log_2 B-rq)} W(2^n)A_n \leq \Vert M\Phi_g\Vert_{L^{\infty}(w)}.
	\]
	Joining the above estimates we get $\Vert g\Vert_{L^{\infty}(w)} \lesssim \Vert M\Phi_g\Vert_{L^{\infty}(w)}$.
\end{proof}

\section{Well-definiteness of $H_\alpha f$ in function spaces}\label{SECdefinitenessHa}
In this section we show that the Hankel transform $H_\alpha f$ is well defined both as the pointwise limit \eqref{EQlimit} (provided that $f$ is $GM$) and also as an element of $\mathcal{H}_\alpha'$, whenever $f$ is from a suitable function space. Both facts put together imply that the inversion formula \eqref{EQinversion} holds almost everywhere for general monotone functions from such a space, in virtue of Theorem~\ref{THMisomorphism}.

\subsection{Pointwise convergence of $H_\alpha f$}

The goal is to show that the limit \eqref{EQlimit} exists for all $y\in \R_+$ whenever $f$ is from certain function spaces; in other words, $H_\alpha f$ is well defined as an improper integral.
\begin{lemma}\label{LEMimproperdef}
	Let $f\in GM$ and $1\leq q\leq \infty$.
	\begin{enumerate}
		\item If $f\in L^q_{t(p,q)}$ with $\dfrac{1}{\alpha+3/2}<p<\infty$, or
		\item if $f\in L^{p,q}$ with $1<p<\infty$, 
	\end{enumerate}
	then the limit
	\[
	\lim_{\substack{M\to 0\\N\to \infty}}\int_M^N f(x)\sqrt{xy}J_\alpha (xy)\, dx
	\]
	exists for all $y\in \R_+$.
\end{lemma}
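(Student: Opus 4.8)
The plan is to split the integral at $x=1$: over $(0,1)$ we establish absolute convergence, while over $(1,\infty)$ we obtain convergence of the improper integral by integrating by parts. First, case (2) reduces to case (1): for $f\in GM$ with $1<p<\infty$, Theorem~\ref{THMbooton} gives $f\in L^{p,q}\Rightarrow f\in L^q_{t(p,q)}$, and since $\alpha\ge-1/2$ forces $\frac{1}{\alpha+3/2}\le 1<p$, the hypothesis of (1) is met. So from now on assume $f\in L^q_{t(p,q)}$ with $p>\frac{1}{\alpha+3/2}$.

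Near the origin, the Bessel estimate \eqref{EQbesselestimate} together with $\alpha+\tfrac12\ge0$ gives $\sqrt{xy}\,|J_\alpha(xy)|\le C_\alpha(xy)^{\alpha+1/2}=C_{\alpha,y}\,x^{\alpha+1/2}$ for $x\in(0,1)$ (both regimes $xy\le1$ and $xy>1$ being covered). Writing $x^{\alpha+1/2}f(x)=\big(x^{1/p-1/q}f(x)\big)\,x^{\alpha+1/2-1/p+1/q}$ and applying H\"older's inequality on $(0,1)$ with exponents $q,q'$ (or the pointwise bound $|f(x)|\le Cx^{-1/p}$ when $q=\infty$), one checks that $\int_0^1 x^{\alpha+1/2}|f(x)|\,dx<\infty$ exactly when $\alpha+\tfrac12-\tfrac1p>-1$, i.e. $p>\frac{1}{\alpha+3/2}$ --- precisely our hypothesis. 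Hence $\int_M^1 f(x)\sqrt{xy}\,J_\alpha(xy)\,dx$ converges as $M\to0^+$ by dominated convergence.

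Over $(1,\infty)$ the kernel $\sqrt{xy}\,J_\alpha(xy)$ is only bounded, so absolute convergence is no longer available and we integrate by parts. Using $\sqrt{xy}\,J_\alpha(xy)=\sqrt{y}\,\frac{d}{dx}K^\alpha_y(x)$ from \eqref{EQprimitive}, for $1\le N<N'$ we obtain
\[
\int_N^{N'} f(x)\sqrt{xy}\,J_\alpha(xy)\,dx=\sqrt{y}\,\big[f(x)K^\alpha_y(x)\big]_N^{N'}-\sqrt{y}\int_N^{N'}K^\alpha_y(x)\,df(x).
\]
By \eqref{EQprimitiveest}, $|K^\alpha_y(x)|\lesssim y^{-3/2}$, so the right-hand side is $\lesssim y^{-1}\big(|f(N)|+|f(N')|+\int_N^\infty|df(x)|\big)$. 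Now $\tfrac{f(x)}{x}\in L^1(1,\infty)$ (again a H\"older estimate, using only $p<\infty$); by Lemma~\ref{LEMgmproperties}(ii), $|f(N)|\lesssim\int_{N/\lambda}^{\lambda N}\tfrac{|f(x)|}{x}\,dx\to0$ as $N\to\infty$, and by Lemma~\ref{LEMgmproperties}(iii) with $\gamma=0$, $\int_N^\infty|df(x)|\lesssim\int_{N/\lambda}^\infty\tfrac{|f(x)|}{x}\,dx\to0$ as $N\to\infty$. Therefore $\int_N^{N'}f(x)\sqrt{xy}\,J_\alpha(xy)\,dx\to0$ as $N,N'\to\infty$, so the Cauchy criterion applies and $\int_1^N f(x)\sqrt{xy}\,J_\alpha(xy)\,dx$ converges as $N\to\infty$. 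Combining the two pieces yields the existence of the limit for every $y\in\R_+$.

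The main obstacle is the behaviour at infinity: $f$ is not assumed integrable and $\sqrt{xy}\,J_\alpha(xy)$ does not decay, so one cannot argue by absolute convergence and must instead exploit the boundedness of the primitive $K^\alpha_y$ together with the $GM$ condition --- through $\int_N^\infty|df|\lesssim\int_{N/\lambda}^\infty|f(x)|x^{-1}\,dx$ and the consequent vanishing of $f$ at infinity --- to make the integration by parts effective. The estimate near the origin, by contrast, is a routine H\"older computation in which the threshold $p>\frac{1}{\alpha+3/2}$ emerges naturally, and the reduction of (2) to (1) is exactly Booton's equivalence (Theorem~\ref{THMbooton}).
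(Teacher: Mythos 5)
Your proof is correct and follows essentially the same route as the paper: the bound $|J_\alpha(z)|\lesssim z^\alpha$ for all $z>0$ plus H\"older's inequality to get absolute convergence near the origin, integration by parts with the bounded primitive $K^\alpha_y$ and the $GM$ estimates to verify the Cauchy criterion at infinity, and the reduction of case (2) to case (1) via Theorem~\ref{THMbooton}. The only cosmetic difference is that you control $|f(N)|$ directly through Lemma~\ref{LEMgmproperties}(ii), whereas the paper uses the vanishing of $f$ at infinity together with $|f(t)|\le\int_t^\infty|df(x)|$; both steps are immediate.
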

\begin{proof}
	We show that for $f\in L^q_{t(p,q)}$ with $p,q$ as in the hypotheses and given $y\in \R_+$,
	\[
	\lim_{M\to 0}\int_0^M |f(x)\sqrt{xy}J_\alpha (xy)|\, dx=0,\qquad \lim_{N_1,N_2\to \infty}\int_{N_1}^{N_2}  f(x)\sqrt{xy}J_\alpha (xy)\, dx=0.
	\]
	The result for $f\in L^{p,q}$ will follow just by Theorem~\ref{THMbooton}. Since $J_\alpha(z)\lesssim z^{\alpha}$ for all $z>0$, by H\"older's inequality, if $1<q<\infty$,
	\begin{align*}
	\int_0^M |f(x)\sqrt{xy}J_\alpha (xy)|\, dx&\lesssim y^{\alpha+1/2} \int_0^M |f(x)|x^{\alpha+1/2}\, dx \\
	&\lesssim \Vert f\Vert_{L^q_{t(p,q)}}\bigg(\int_0^M x^{(\alpha+1/2-1/p+1/q)q'}dx\bigg)^{1/q'}\to 0 \qquad \text{as }M\to 0,
	\end{align*}
	for any $\dfrac{1}{\alpha+3/2}<p<\infty$. If $q=1$, we have 
	\[
	y^{\alpha+1/2} \int_0^M |f(x)|x^{\alpha+1/2}\, dx \lesssim  M^{\alpha+3/2-1/p}\Vert f\Vert_{L^1_{t(p,1)}}\to 0 \qquad \text{as } M\to 0,
	\]
	 and  if $q=\infty$,
	\[
	y^{\alpha+1/2} \int_0^M |f(x)|x^{\alpha+1/2}\, dx \lesssim \Vert f\Vert_{L^\infty_{t(p,\infty)}}\int_0^M x^{\alpha+1/2-1/p}\, dx\to 0 \qquad \text{as } M\to 0,
	\]
	with all the estimates valid for any $\dfrac{1}{\alpha+3/2}< p<\infty$.
	
	Integrating by parts and using \eqref{EQprimitiveest}, we get
	\begin{align*}
	\bigg|\int_{N_1}^{N_2}f(x)\sqrt{xy}J_\alpha (xy)\, dx\bigg|&\leq \sqrt{y}\big( |f(N_2)K_y^\alpha(N_2)|+|f(N_1)K_y^\alpha(N_1)| \big)+\sqrt{y}\int_{N_1}^{N_2}|K_y^\alpha(x)df(x)|\\
	&\lesssim |f(N_1)|+|f(N_2)|+\int_{N_1}^{N_2}|df(x)|.
	\end{align*}
	By Lemma~\ref{LEMgmproperties}, for $1\leq q<\infty$, $x^{q/p}f(x)\to 0$ as $x\to \infty$, and so does $f(x)$ (in the case $q=\infty$, $f$ trivially vanishes at infinity). Therefore, 
	\[
	|f(N_1)|+|f(N_2)|+\int_{N_1}^{N_2}|df(x)|\lesssim \int_{N_1}^\infty |df(x)|\lesssim \int_{N_1/\lambda}^\infty \frac{|f(x)|}{x}\, dx,
	\]
	and by H\"older's inequality, for $1<q<\infty$,
	\[
	\int_{N_1/\lambda}^\infty \frac{|f(x)|}{x}\, dx \lesssim \Vert f\Vert_{L^q_{t(p,q)}} \bigg(\int_{N_1}^\infty x^{-1-q'/p}dx\bigg)^{1/q'}\to 0 \qquad \text{as }N_1\to \infty.
	\] 
	For $q=1$, it is clear that
	\[
	\int_{N_1/\lambda}^\infty \frac{|f(x)|}{x}\, dx \lesssim N_1^{-1/p}  \Vert f\Vert_{L^1_{t(p,1)}}\to 0 \qquad \text{as }N_1\to \infty,
	\]
	and finally, for $q=\infty$,
	\[
	\int_{N_1/\lambda}^\infty \frac{|f(x)|}{x}\, dx \lesssim \Vert f\Vert_{L^\infty_{t(p,\infty)}} \int_{N_1/\lambda}^\infty \frac{1}{x^{1+1/p}}\, dx \to 0 \qquad \text{as }N_1\to \infty.\qedhere
	\]
\end{proof}

\subsection{Weighted Lebesgue spaces $L^q(w)$}
We first give sufficient conditions on the weight $w$ so that $H_\alpha f\in \mathcal{H}'_\alpha$ whenever $f\in L^q(w)$.
\begin{proposition}\label{PROPdistrweightedlebesgue}
	 Let $f\in L^q(w)$, where $1\leq  q\leq \infty$ and $w:\R_+\to \R_+$ is a weight function satisfying 
	 \begin{enumerate}[label=\textnormal{(}\roman{*}\textnormal{)}]
	 	\item $\displaystyle\sup_{x\in(0,1)}x^{\alpha+1/2}w(x)^{-1}+ \sup_{x\in (1,\infty)} x^\gamma w(x)^{-1}<\infty$ for some $\gamma>-1$, if $q=1$\textnormal{;}
	 	\item $\displaystyle\int_0^1 x^{(\alpha+1/2)q'}w(x)^{-q'/q}\, dx+\int_1^\infty x^{\gamma q'}w(x)^{-q'/q}\, dx<\infty$ for some $\gamma>-1$, if $1<q<\infty$\textnormal{;}
	 	\item $\displaystyle\int_0^1 x^{\alpha+1/2}w(x)^{-1}\, dx+\int_1^\infty x^{\gamma }w(x)^{-1}\, dx<\infty$ for some $\gamma>-1$, if $q=\infty$.
	 \end{enumerate}
	 Then the functional
	\begin{align}
	H_\alpha f:\mathcal{H}_\alpha&\to \C \nonumber \\
	\varphi&\mapsto \langle H_\alpha f,\varphi \rangle = \langle f,H_\alpha \varphi\rangle,\label{EQhankeltrFUNCTIONAL}
	\end{align}
	is continuous.
\end{proposition}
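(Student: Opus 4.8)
The plan is to show that the linear functional $H_\alpha f$ defined by $\langle H_\alpha f,\varphi\rangle=\langle f,H_\alpha\varphi\rangle$ is well defined and continuous on $\mathcal{H}_\alpha$. The strategy is to bound $|\langle f,H_\alpha\varphi\rangle|$ by a finite combination of the seminorms $\gamma^\alpha_{m,n}(\varphi)$ — more precisely, since $H_\alpha$ is an isomorphism of $\mathcal{H}_\alpha$ onto itself (the lemma of Zemanian quoted above), it suffices to estimate $|\langle f, \psi\rangle|$ for $\psi=H_\alpha\varphi\in\mathcal{H}_\alpha$ in terms of the seminorms of $\psi$, and then invoke continuity of $H_\alpha$ on $\mathcal{H}_\alpha$ to rewrite the bound in terms of the seminorms of $\varphi$.

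First I would split the integral defining $\langle f,\psi\rangle=\int_0^\infty f(x)\psi(x)\,dx$ at $x=1$. On $(0,1)$: from the definition \eqref{EQseminorms} with $m=n=0$ we have $|\psi(x)|=|x^{\alpha+1/2}\cdot x^{-\alpha-1/2}\psi(x)|\le x^{\alpha+1/2}\gamma^\alpha_{0,0}(\psi)$, so $|f(x)\psi(x)|\lesssim x^{\alpha+1/2}|f(x)|\gamma^\alpha_{0,0}(\psi)$. On $(1,\infty)$: choosing $m$ large enough that $x^{-m}$ decays, and using $|\psi(x)|\le x^{\alpha+1/2-m}\gamma^\alpha_{m,0}(\psi)$, we get a factor $x^{\alpha+1/2-m}$; here one picks $m$ so that $\gamma:=m-\alpha-1/2 > -1$ fails in the wrong direction, i.e. so the weight condition on $(1,\infty)$ applies — concretely one wants $x^{\alpha+1/2-m}=x^{-\gamma'}$ with $\gamma'$ large, and the hypothesis guarantees $\int_1^\infty x^{\gamma q'}w^{-q'/q}<\infty$ for the specific $\gamma>-1$ in the statement, so one should rather match $\alpha+1/2-m = -\gamma$, i.e. take $m=\alpha+1/2+\gamma$ rounded up to an integer (enlarging $\gamma$ only helps since larger $\gamma$ with the same integrability is fine after shrinking). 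Then in both regions apply Hölder's inequality with exponents $q$ and $q'$ against the weight $w$: on $(0,1)$,
\[
\int_0^1 |f\psi|\,dx \lesssim \gamma^\alpha_{0,0}(\psi)\Big(\int_0^1 w|f|^q\Big)^{1/q}\Big(\int_0^1 x^{(\alpha+1/2)q'}w^{-q'/q}\,dx\Big)^{1/q'},
\]
which is finite by hypothesis (ii) and bounded by $\|f\|_{L^q(w)}\gamma^\alpha_{0,0}(\psi)$ times a constant; similarly on $(1,\infty)$ using hypothesis (ii) with the chosen $\gamma$, and with $\gamma^\alpha_{m,0}(\psi)$ in place of $\gamma^\alpha_{0,0}(\psi)$. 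The cases $q=1$ (use (i), with the supremum in place of the $L^{q'}$ norm) and $q=\infty$ (use (iii), with the roles of $q$ and $q'$ swapped so $f$ is pulled out in sup-norm against weight $w$) are handled the same way with the obvious modifications.

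This yields $|\langle f,\psi\rangle|\lesssim \|f\|_{L^q(w)}\big(\gamma^\alpha_{0,0}(\psi)+\gamma^\alpha_{m,0}(\psi)\big)$ for a fixed integer $m$ depending only on $\alpha$ and $\gamma$. Finally, substituting $\psi=H_\alpha\varphi$ and using that $H_\alpha:\mathcal{H}_\alpha\to\mathcal{H}_\alpha$ is continuous — so each seminorm $\gamma^\alpha_{m,0}(H_\alpha\varphi)$ is dominated by a finite maximum of seminorms $\gamma^\alpha_{j,k}(\varphi)$ with $j,k$ bounded in terms of $m$ — we conclude that $|\langle H_\alpha f,\varphi\rangle|\lesssim \|f\|_{L^q(w)}\max_{0\le j,k\le r}\gamma^\alpha_{j,k}(\varphi)$ for some $r\in\N$, which is exactly continuity of the functional \eqref{EQhankeltrFUNCTIONAL} in the topology of $\mathcal{H}_\alpha$. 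The main obstacle is bookkeeping: pinning down the correct integer $m$ (equivalently the usable range of $\gamma$) so that the tail integral $\int_1^\infty x^{(\alpha+1/2-m)q'}w^{-q'/q}$ matches the hypothesized condition after possibly enlarging $\gamma$, and making sure the near-origin integrability $\int_0^1 x^{(\alpha+1/2)q'}w^{-q'/q}$ is exactly what hypothesis (ii) provides — the estimate $|J_\alpha|$ plays no role here since $\psi$ is a test function, not $J_\alpha$ itself, so the only genuinely delicate point is the interplay of the decay exponent with the weight in the three exponent regimes $q=1$, $1<q<\infty$, $q=\infty$.
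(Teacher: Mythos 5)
Your argument is correct in substance, but it takes a genuinely different route from the paper. The paper never passes through the seminorms of $\psi=H_\alpha\varphi$: it estimates $H_\alpha\varphi$ pointwise directly from the integral definition, splitting at $t=1/x$, using the Bessel bounds \eqref{EQbesselestimate} and an integration by parts with the primitive estimate \eqref{EQprimitiveest}, to get $|H_\alpha\varphi(x)|\lesssim x^{\alpha+1/2}$ on $(0,1)$ and $|H_\alpha\varphi(x)|\lesssim x^{\gamma}$ on $(1,\infty)$ for any $\gamma>-1$, with constants that are explicit sup-norms of $\varphi$, $\varphi'$, $t^{\alpha+5/2}\varphi$, $t^2\varphi'$ (which are then dominated by the seminorms \eqref{EQseminorms}); H\"older against $w$ then gives exactly (i)--(iii). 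You instead observe that these same two pointwise bounds are already encoded in the seminorms of $\psi$ itself ($\gamma^\alpha_{0,0}$ near the origin, $\gamma^\alpha_{m,0}$ with $m$ large at infinity) and invoke Zemanian's isomorphism lemma to convert seminorms of $H_\alpha\varphi$ into seminorms of $\varphi$. This is cleaner and makes the Bessel estimates invisible (they are absorbed into the quoted continuity of $H_\alpha$ on $\mathcal{H}_\alpha$), at the cost of relying on that continuity quantitatively, whereas the paper's computation is self-contained. One bookkeeping point in your write-up is backwards and should be fixed: the weight hypothesis holds for a \emph{fixed} $\gamma>-1$, and enlarging $\gamma$ makes the integrability condition at infinity \emph{stronger}, so you cannot ``enlarge $\gamma$''; also the prescription $m=\alpha+1/2+\gamma$ fails when $\gamma\in(-1,0)$ (e.g.\ $\alpha=-1/2$, $\gamma$ near $-1$ gives $m=0$ and the bound $x^{0}\not\lesssim x^{\gamma}$ on $(1,\infty)$). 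What you actually need is $\alpha+1/2-m\le\gamma$, and since $\gamma>-1$ it suffices to take any integer $m\ge\alpha+3/2$, after which $|\psi(x)|\le x^{\alpha+1/2-m}\gamma^\alpha_{m,0}(\psi)\le x^{\gamma}\gamma^\alpha_{m,0}(\psi)$ for $x\ge1$ and the rest of your argument goes through verbatim in all three regimes $q=1$, $1<q<\infty$, $q=\infty$.
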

\begin{proof}
	Let $\varphi\in \mathcal{H}_\alpha$. By H\"older's inequality, we have
	\[
	|\langle H_\alpha f,\varphi \rangle|\leq \int_0^\infty |f(x)H_\alpha \varphi(x)|\, dx\leq \begin{cases} \Vert f\Vert_{L^q(w)}\Vert w^{-1/q}H_\alpha \varphi\Vert_{L^{q'}},&\text{if }1\leq q<\infty,\\
	\Vert f\Vert_{L^\infty(w)}\Vert  w^{-1}H_\alpha \varphi\Vert_{L^{1}},&\text{if }q=\infty.
	\end{cases}
	\]
	In order to estimate the weighted $L^{q'}$ norm of $H_\alpha \varphi$, we first obtain pointwise estimates for such a function. On the first place, for $x\leq 1$ one has $|H_\alpha\varphi(x)|\leq C_{\alpha,\varphi} x^{\alpha+1/2}$. Indeed, since $|J_\alpha(z)|\lesssim z^{\alpha}$ for $z< 1$ and $|J_\alpha(z)|\lesssim z^{-1/2}$ for $z\geq 1$, we have
	\begin{align*}
	|H_\alpha \varphi(x)|&\lesssim x^{\alpha+1/2}\int_0^{1/x} t^{\alpha+1/2}|\varphi(t)|\, dt+x^{1/2}\bigg|\int_{1/x}^\infty t^{1/2}\varphi(t)J_\alpha(xt)\, dt\bigg|\\
	&\lesssim x^{\alpha+1/2}\int_0^\infty t^{\alpha+1/2}|\varphi(t)|\, dt+x^{\alpha+1/2}\int_{1/x}^\infty t^{\alpha+1/2}|\varphi(t)|\, dt\lesssim  x^{\alpha+1/2}\int_0^\infty t^{\alpha+1/2}|\varphi(t)|\, dt\\
	&=x^{\alpha+1/2}\bigg(\int_0^1 t^{\alpha+1/2}|\varphi(t)|\, dt + \int_1^\infty \frac{1}{t^2}t^{\alpha+5/2}|\varphi(t)|\, dt\bigg)\\
	&\lesssim x^{\alpha+1/2}\bigg( \sup_{t\in\R_+}|\varphi(t)|+\sup_{t\in\R_+} t^{\alpha+5/2}|\varphi(t)|\bigg).
	\end{align*}
	Secondly, for $x\geq 1$ and any $\gamma>-1$ there holds $|H_\alpha \varphi(x)|\leq C'_{\alpha,\varphi} x^{\gamma}$. Indeed, integration by parts together with \eqref{EQprimitiveest} yield
	\begin{align*}
	|H_\alpha \varphi(x)|&\lesssim x^{\alpha+1/2}\int_0^{1/x} t^{\alpha+1/2}|\varphi(t)|\, dt+x^{1/2}\bigg|\int_{1/x}^\infty t^{1/2}\varphi(t)J_\alpha(xt)\, dt\bigg|\\
	&\leq x^{\gamma}\int_0^{1/x}t^{\gamma}|\varphi(t)|\, dt+x^{1/2}|K_{x}^\alpha(1/x)\varphi(1/x)|+x^{1/2}\int_{1/x}^\infty |K_{x}^\alpha(t)\varphi'(t)|\, dt\\
	&\lesssim x^{\gamma}\sup_{t\in \R_+}|\varphi(t)|+x^{-1}\sup_{t\in \R_+}|\varphi(t)|+x^{-1}\int_{1/x}^\infty |\varphi'(t)|\, dt\\
	&\lesssim  x^{\gamma}\sup_{t\in \R_+}|\varphi(t)| +x^{\gamma}\int_{1/x}^1 |\varphi'(t)|\, dt+x^{\gamma}\int_1^{\infty}\frac{1}{t^2}t^2|\varphi'(t)|\, dt\\
	&\lesssim x^{\gamma}\bigg(\sup_{t\in \R_+}|\varphi(t)|+\sup_{t\in \R_+}|\varphi'(t)|+\sup_{t\in \R_+}|t^2\varphi'(t)|\bigg).
	\end{align*}

	Assume first that $1< q< \infty$. Then
	\begin{align*}
	\Vert w^{-1/q}H_\alpha \varphi\Vert_{L^{q'}}&\asymp \bigg( \int_0^1 w(x)^{-q'/q}|H_\alpha \varphi(x)|^{q'}\, dx\bigg)^{1/q'} +\bigg(\int_1^\infty w(x)^{-q'/q}|H_\alpha  \varphi(x)|^{q'}\, dx\bigg)^{1/q'}\\
	&\lesssim \bigg(\int_0^1 x^{(\alpha+1/2)q'}w(x)^{-q'/q}\, dx \bigg)^{1/q'}\bigg( \sup_{t\in \R_+}|\varphi(t)|+\sup_{t\in \R_+} t^{\alpha+5/2}|\varphi(t)|\bigg)\\
	&\phantom{=}+\bigg(\int_{1}^\infty x^{\gamma q'}w(x)^{-q'/q}\, dx\bigg)^{1/q'}\bigg(\sup_{t\in \R_+}|\varphi(t)|+\sup_{t\in \R_+}|\varphi'(t)|+\sup_{t\in \R_+}|t^2\varphi'(t)|\bigg).
	\end{align*}
	Note that the suprema involving $\varphi$ and $\varphi'$ need not be functionals from the collection of seminorms \eqref{EQseminorms}, but they can be trivially estimated from above by linear combinations of those.
	
	For the case $q=\infty$, similar calculations yield $H_\alpha f\in \mathcal{H}'_\alpha$. Finally, if $q=1$,
	\begin{align*}
	\Vert w^{-1}H_\alpha \varphi\Vert_{L^{\infty}}&\leq \sup_{x\in (0,1)} w(x)^{-1}|H_\alpha\varphi (x) |+  \sup_{x\in (1,\infty)} w(x)^{-1}|H_\alpha \varphi(x)|\\
	&\lesssim \bigg(\sup_{x\in (0,1)}x^{\alpha+1/2}w(x)^{-1}\bigg)\bigg( \sup_{t\in\R_+}|\varphi(t)|+\sup_{t\in\R_+} t^{\alpha+5/2}|\varphi(t)|\bigg)\\
	&\phantom{=}+\bigg(\sup_{x\in (1,\infty)} x^{\gamma}w(x)^{-1}\bigg)\bigg(\sup_{t\in \R_+}|\varphi(t)|+\sup_{t\in \R_+}|\varphi'(t)|+\sup_{t\in \R_+}|t^2\varphi'(t)|\bigg),
	\end{align*}
	which completes the proof.
\end{proof}
Proposition~\ref{PROPdistrweightedlebesgue} allows to easily derive sufficient conditions on the parameters $p,q$, so that $f\in L^q_{t(p,q)}$ induces a continuous operator $H_\alpha f\in \mathcal{H}'_\alpha$.

\begin{corollary}
	Let $1\leq q\leq \infty$ and $0<p\leq\infty$. Let $f\in L^q_{t(p,q)}$. Then, $H_\alpha f\in \mathcal{H}'_\alpha$, provided that
	\begin{enumerate}[label=\textnormal{(}\roman{*}\textnormal{)}]
		\item $\dfrac{1}{\alpha+3/2}\leq p< \infty$, if $q=1$\textnormal{;}
		\item $\dfrac{1}{\alpha+3/2}<p<\infty$, if $1< q\leq \infty$.
	\end{enumerate}
\end{corollary}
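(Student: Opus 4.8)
The plan is to obtain the statement as an immediate consequence of Proposition~\ref{PROPdistrweightedlebesgue}, after identifying the space $L^q_{t(p,q)}$ with the weighted Lebesgue space $L^q(w)$ for an appropriate power weight. Recall that $L^q_{t(p,q)}=L^q(w)$ with $w(x)=x^{q/p-1}$ when $q<\infty$, while $L^\infty_{t(p,\infty)}=L^\infty(w)$ with $w(x)=x^{1/p}$. Hence it is enough to verify that this particular $w$ satisfies, respectively, hypothesis (i), (ii) or (iii) of Proposition~\ref{PROPdistrweightedlebesgue} under the restrictions on $p$ and $q$ listed in the corollary; the continuity of the functional $H_\alpha f$ on $\mathcal H_\alpha$, i.e.\ $H_\alpha f\in\mathcal H_\alpha'$, then follows at once.

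The verification reduces to checking when certain power integrals (or suprema) are finite. For $1<q<\infty$ one has $w(x)^{-q'/q}=x^{q'/q-q'/p}$ and $q'/q=q'-1$, so the first integral in hypothesis (ii) equals $\int_0^1 x^{(\alpha+3/2)q'-q'/p-1}\,dx$, which converges exactly when $(\alpha+3/2)q'>q'/p$, i.e.\ when $p>\tfrac{1}{\alpha+3/2}$; the second integral equals $\int_1^\infty x^{(\gamma+1)q'-q'/p-1}\,dx$, which converges for a given $\gamma$ precisely when $\gamma<\tfrac1p-1$, and such a $\gamma>-1$ exists if and only if $p<\infty$. For $q=\infty$ (so $w(x)=x^{1/p}$) hypothesis (iii) requires $\int_0^1 x^{\alpha+1/2-1/p}\,dx<\infty$, again giving $p>\tfrac{1}{\alpha+3/2}$, together with the existence of $\gamma\in(-1,\tfrac1p-1)$, i.e.\ $p<\infty$. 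For $q=1$ (so $w(x)^{-1}=x^{1-1/p}$) hypothesis (i) requires $\sup_{x\in(0,1)}x^{\alpha+3/2-1/p}<\infty$, which holds as soon as $p\ge\tfrac{1}{\alpha+3/2}$ --- here the weaker, non-strict condition appears because a supremum over $(0,1)$ tolerates the critical exponent $0$, whereas an integral near $0$ does not --- together with $\sup_{x\in(1,\infty)}x^{\gamma+1-1/p}<\infty$ for some $\gamma>-1$, which once more forces $p<\infty$. In every case the conditions obtained are exactly those stated in the corollary.

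I do not expect any genuine obstacle: Proposition~\ref{PROPdistrweightedlebesgue} already carries out the analytic work of converting membership in $L^q(w)$ into continuity of the Hankel functional, and what is left is the elementary bookkeeping above. The only subtle point worth emphasizing is the distinction between the endpoint behavior for $q=1$ and for $q>1$: the value $p=\tfrac{1}{\alpha+3/2}$ is admissible precisely when $q=1$, which is the direct manifestation of the fact that hypothesis (i) of the proposition involves an $L^\infty$ bound on the weighted $H_\alpha\varphi$, while hypotheses (ii)--(iii) involve an integral.
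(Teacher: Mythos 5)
Your proposal is correct and follows exactly the paper's route: the paper likewise derives the corollary directly from Proposition~\ref{PROPdistrweightedlebesgue} applied to the power weights $w(x)=x^{q/p-1}$ (i.e.\ $x^{-1/p'}$ when $q=1$) and $w(x)=x^{1/p}$ for $q=\infty$, with the elementary power-integral/supremum checks you spell out (including the endpoint $p=\tfrac{1}{\alpha+3/2}$ being admissible only for $q=1$) left implicit there.
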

\begin{proof}
	It is a direct consequence of Proposition~\ref{PROPdistrweightedlebesgue} with different choices of $w$: for $q=1$, we use   $w(x)=x^{-1/p'}$, for $1< q<\infty$ we use $w(x)=x^{q/p-1}$, and finally, for $q=\infty$ we use $w(x)=x^{1/p}$.
\end{proof}

\subsection{Lorentz spaces $L^{p,q}$}

We now show that if $f$ is a function from a certain Lorentz space it also induces continuous operator $H_\alpha f\in \mathcal{H}_\alpha'$. First, let us introduce the following notation. For $1\leq p,q\leq \infty$, we say that an integral operator $T$ is of type $(p,q)$ if $T:L^p\to L^q$ is bounded. Here we need Calder\'on's rearrangement inequality \cite{calderon} (see also \cite{heinigWeighted}).
\begin{theorem}\label{THMcalderon}
	Let $T$ be a sublinear operator of types $(1,\infty)$ and $(a,a')$, for some $1<a<\infty$. Then
	\[
	(T\varphi)^*(y)\lesssim \int_0^{1/y}\varphi^*(x)\, dx+\frac{1}{y^{a'}}\int_{1/y}^\infty \frac{\varphi^*(x)}{x^{a'}}\, dx
	\]
\end{theorem}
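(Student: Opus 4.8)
This is Calder\'on's rearrangement inequality; we recall the structure of its proof, which is a Calder\'on--Zygmund-type splitting of $\varphi$ at the level $\varphi^*(1/y)$ combined with the two endpoint hypotheses. (If $T$ additionally commutes with dilations---as the Hankel transform does---the estimate for general $y$ follows from the case $y=1$ by rescaling, but this is not needed.)

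Fix $y>0$ and set $\lambda=\varphi^*(1/y)$. Write $\varphi=\varphi^0+\varphi^1$, where $\varphi^1$ is the truncation of $\varphi$ at height $\lambda$ (so $|\varphi^1|\le\lambda$ pointwise and $(\varphi^1)^*=\min(\varphi^*,\lambda)$, which coincides with $\varphi^*$ on $[1/y,\infty)$), and $\varphi^0=\varphi-\varphi^1$ is supported on $\{|\varphi|>\lambda\}$, a set of measure at most $1/y$, so that $\Vert\varphi^0\Vert_{L^1}\le\int_0^{1/y}\varphi^*(x)\,dx$. Since $T$ is sublinear, $(T\varphi)^*(y)\le(T\varphi^0)^*(y/2)+(T\varphi^1)^*(y/2)$. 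The type $(1,\infty)$ bound controls the first summand, $(T\varphi^0)^*(y/2)\le\Vert T\varphi^0\Vert_{L^\infty}\lesssim\Vert\varphi^0\Vert_{L^1}\le\int_0^{1/y}\varphi^*(x)\,dx$, which is the first term on the right-hand side. For the bounded piece, Chebyshev's inequality together with the type $(a,a')$ bound give $(T\varphi^1)^*(y/2)\le(y/2)^{-1/a'}\Vert T\varphi^1\Vert_{L^{a'}}\lesssim y^{-1/a'}\Vert\varphi^1\Vert_{L^a}$.

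It remains---and here lies the real work---to pass from the crude quantity $y^{-1/a'}\Vert\varphi^1\Vert_{L^a}$ to the sharp tail term $y^{-a'}\int_{1/y}^\infty\varphi^*(x)x^{-a'}\,dx$; estimating $\Vert\varphi^1\Vert_{L^a}$ directly yields only $y^{-1/a'}\bigl(\int_{1/y}^\infty\varphi^*(x)^a\,dx\bigr)^{1/a}$, which is wasteful precisely when $\varphi^*$ decays slowly (e.g.\ if $\varphi^*$ is roughly constant on a long interval). One fixes this by iterating the splitting over the dyadic rearrangement scales $2^k/y$, $k\ge1$: decompose $\varphi^1$ into pieces $\varphi_k$ that are bounded by $\varphi^*(2^{k-1}/y)$ and have rearrangement support of measure $\asymp 2^k/y$, apply to each $\varphi_k$ the appropriate endpoint estimate---or one of the interpolated bounds $L^p\to L^{p'}$, $1<p<a$, which $T$ inherits because the target exponent is the conjugate of the source exponent at both endpoints---at a scale $t_k$ with $\sum_k t_k\le y/2$, and then sum. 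Using the monotonicity of $\varphi^*$ and a geometric choice of the $t_k$ balanced against the exponents, the resulting series reassembles into a tail term of the form on the right-hand side. This optimization is carried out in full in \cite{calderon}, to which we refer for the details; the principal obstacle is exactly the bookkeeping of the two endpoint exponents, and it is the self-dual relation between source and target exponents that forces the estimate into the clean form stated above.
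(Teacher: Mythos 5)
The paper does not prove this statement --- it is quoted from \cite{calderon} (see also \cite{heinigWeighted}) --- so your sketch has to stand on its own, and it does not. The opening is the standard one and is correct: the splitting at height $\lambda=\varphi^*(1/y)$, the $(1,\infty)$ bound for the unbounded piece, and the bound $(T\varphi^1)^*(y/2)\lesssim y^{-1/a'}\Vert\varphi^1\Vert_{L^a}$. But the paragraph you label ``the real work'' is precisely what is missing, and your description of it is backwards. For a decreasing function, splitting $(1/y,\infty)$ into dyadic blocks and using $\Vert\cdot\Vert_{\ell^a}\leq\Vert\cdot\Vert_{\ell^1}$ gives
\[
\Bigl(\int_{1/y}^\infty \varphi^*(x)^a\,dx\Bigr)^{1/a}\lesssim \int_{1/y}^\infty x^{1/a-1}\varphi^*(x)\,dx,
\]
while $\Vert\varphi^1\Vert_{L^a}\leq (1/y)^{1/a}\varphi^*(1/y)+\bigl(\int_{1/y}^\infty \varphi^*(x)^a\,dx\bigr)^{1/a}$ and $y^{-1/a'}(1/y)^{1/a}\varphi^*(1/y)=y^{-1}\varphi^*(1/y)\leq\int_0^{1/y}\varphi^*(x)\,dx$. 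So from the point where you stopped, Calder\'on's inequality in its classical form, with tail term $y^{-1/a'}\int_{1/y}^\infty x^{1/a-1}\varphi^*(x)\,dx$, follows in two lines: your ``crude'' quantity is not wasteful at all, it is \emph{dominated} by that tail, and no iteration over scales, no interpolated $(p,p')$ bounds, and no optimization are needed.

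The reason you could not close the argument is that the tail you are aiming at --- the one printed in the statement, $y^{-a'}\int_{1/y}^\infty x^{-a'}\varphi^*(x)\,dx$ --- is not Calder\'on's, and it cannot be reached by any proof, because with this exponent the inequality is false. Indeed, since $\varphi^*$ is decreasing,
\[
y^{-a'}\int_{1/y}^\infty x^{-a'}\varphi^*(x)\,dx\leq \frac{1}{a'-1}\,y^{-1}\varphi^*(1/y)\leq\frac{1}{a'-1}\int_0^{1/y}\varphi^*(x)\,dx,
\]
so the printed estimate would assert $(T\varphi)^*(y)\lesssim\int_0^{1/y}\varphi^*(x)\,dx$, and this fails already for the cosine transform ($a=a'=2$, one of the operators the theorem is applied to): taking $\varphi(x)=\cos(x^2/2)\chi_{(0,N)}(x)$, stationary phase gives $|T\varphi(y)|\geq c>0$ on a subset of $(0,N)$ of measure at least $\delta N$, hence $(T\varphi)^*(\delta N)\geq c$, while $\int_0^{1/(\delta N)}\varphi^*\leq(\delta N)^{-1}$, so no constant independent of $\varphi$ can work. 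The correct exponent is $1/a'$ (equivalently, the weight $x^{1/a-1}$), which is what \cite{calderon} and \cite{heinigWeighted} actually prove; with that correction your argument closes as indicated above, but as written, your plan of ``reassembling'' a dyadic series into the printed tail cannot succeed, and deferring the decisive step to \cite{calderon} does not prove the statement in the form given, since that reference establishes a different (weaker-tailed) inequality.
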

\begin{remark}\label{REMtype22}
	The Hankel transform \eqref{EQhankeltr} is of types $(1,\infty)$ and $(2,2)$ for every $\alpha\geq -1/2$, see \cite{DC,macaulay}.
\end{remark}
\begin{proposition}
	Let $f\in L^{p,q}$, with $1<p<\infty$ and $1\leq q\leq \infty$. Then the functional $H_\alpha f$ defined by \eqref{EQhankeltrFUNCTIONAL}	is continuous.
\end{proposition}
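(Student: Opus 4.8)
The plan is to mirror the structure of the proof of Proposition~\ref{PROPdistrweightedlebesgue}, but with the weighted Hölder estimate replaced by a Lorentz-space duality argument, using the pointwise rearrangement bound furnished by Theorem~\ref{THMcalderon}. Concretely, for $\varphi\in \mathcal{H}_\alpha$ we want to show $|\langle H_\alpha f,\varphi\rangle| = |\langle f, H_\alpha\varphi\rangle| \leq \int_0^\infty |f(x)||H_\alpha\varphi(x)|\,dx$ is bounded by a finite linear combination of the seminorms $\gamma_{m,n}^\alpha(\varphi)$. By the Hölder inequality for Lorentz spaces (valid since $1<p<\infty$, and for the endpoint exponents $q=1,\infty$ as well), $\int_0^\infty |f||H_\alpha\varphi| \lesssim \Vert f\Vert_{L^{p,q}}\Vert H_\alpha\varphi\Vert_{L^{p',q'}}$, so it suffices to bound $\Vert H_\alpha\varphi\Vert_{L^{p',q'}}$ by such a combination of seminorms.

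First I would invoke Remark~\ref{REMtype22}: the Hankel transform is of types $(1,\infty)$ and $(2,2)$, so Theorem~\ref{THMcalderon} (with $a=a'=2$) gives the pointwise bound
\[
(H_\alpha\varphi)^*(y)\lesssim \int_0^{1/y}\varphi^*(x)\,dx + \frac{1}{y^2}\int_{1/y}^\infty \frac{\varphi^*(x)}{x^2}\,dx =: I_1(y)+I_2(y).
\]
Then $\Vert H_\alpha\varphi\Vert_{L^{p',q'}} \lesssim \Vert y^{1/p'}I_1(y)\Vert_{L^{q'}(dy/y)} + \Vert y^{1/p'}I_2(y)\Vert_{L^{q'}(dy/y)}$ (using the rearrangement expression of the Lorentz norm; note $(H_\alpha\varphi)^*$ is itself decreasing so there is no issue, and for $q'=\infty$ one takes sups). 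The two summands are, respectively, a Hardy-type averaging operator and its dual applied to $\varphi^*$, both acting on the weighted space $L^{q'}(y^{q'/p'-1}\,dy)$; since $1<p'<\infty$, the classical Hardy inequality bounds each by $\Vert \varphi^*\Vert$ in a suitable weighted Lebesgue/Lorentz norm, which is $\asymp \Vert\varphi\Vert_{L^{s,q'}}$ for the appropriate index. Finally, exactly as in the proof of Proposition~\ref{PROPdistrweightedlebesgue}, this Lorentz norm of $\varphi$ is dominated by $\sup_{t}|\varphi(t)| + \sup_t t^{N}|\varphi(t)|$ for $N$ large (splitting $\int_0^1$ and $\int_1^\infty$, on the latter inserting $t^{-2}\cdot t^2$ or a higher power), and each such supremum is a linear combination of the seminorms $\gamma^\alpha_{m,n}(\varphi)$. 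This establishes continuity of $H_\alpha f$ on $\mathcal{H}_\alpha$, hence $H_\alpha f\in\mathcal{H}_\alpha'$.

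The main obstacle I anticipate is handling the boundary exponents $q=1$ and $q=\infty$ (so $q'=\infty$ and $q'=1$) cleanly, together with the fact that Calderón's inequality and the Hardy inequalities need to be applied in the right weighted-Lorentz formulation so that the Hardy operators are bounded on $L^{q'}(y^{q'/p'-1}dy)$; the condition $1<p<\infty$ (equivalently $1<p'<\infty$) is exactly what makes the power weight admissible, so the restriction on $p$ is essential and should be highlighted. A secondary technical point is justifying the interchange $\langle H_\alpha f,\varphi\rangle = \int f\cdot H_\alpha\varphi$ and the Hölder step in the Lorentz setting, but this is routine once $H_\alpha\varphi\in L^{p',q'}$ is known. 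Apart from that, the argument is a direct adaptation of the weighted-Lebesgue case with Calderón's rearrangement estimate supplying the pointwise control that Hölder's inequality on $H_\alpha\varphi$ supplied before.
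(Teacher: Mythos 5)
Your proposal is correct and follows essentially the paper's own route: Hölder's inequality on Lorentz spaces followed by Calderón's rearrangement estimate (Theorem~\ref{THMcalderon} with the $(1,\infty)$ and $(2,2)$ types of Remark~\ref{REMtype22}), ending with a bound by suprema of $\varphi$ that are dominated by the seminorms $\gamma^\alpha_{m,n}$. The only difference is that the paper first uses $\Vert H_\alpha\varphi\Vert_{L^{p',q'}}\lesssim\Vert H_\alpha\varphi\Vert_{L^{p',1}}$, so that a single Fubini computation on the $L^{p',1}$ norm suffices and the weighted Hardy inequalities and the endpoint cases $q'=1,\infty$ you flag as obstacles are avoided altogether.
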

\begin{proof}
	Let $\varphi \in \mathcal{H}_\alpha$. By H\"older's inequality on Lorentz spaces (cf. \cite[Ch. IV, Theorem 4.7]{BSbook}) and the fact that $\Vert g\Vert_{L^{p,r}}\lesssim \Vert g\Vert_{L^{p,s}}$ for any $s\leq r$ (see \cite[Ch. I]{Grafakosbook}), we have
	\[
	|\langle H_\alpha f,\varphi \rangle|\leq \int_0^\infty |f(x)H_\alpha \varphi(x)|\, dx\leq \Vert f\Vert_{L^{p,q}}\Vert H_\alpha \varphi\Vert_{L^{p',q'}}\lesssim \Vert f\Vert_{L^{p,q}}\Vert H_\alpha \varphi\Vert_{L^{p',1}}.
	\]
	We now estimate $\Vert  H_\alpha \varphi\Vert_{L^{p',1}}$ from above by a finite linear combination of seminorms of $\varphi$ on $\mathcal{H}_\alpha$, which will yield $H_\alpha f\in \mathcal{H}_\alpha'$. We have, by Theorem~\ref{THMcalderon} (see also Remark~\ref{REMtype22}),
	\begin{align*}
	\Vert H_\alpha \varphi\Vert_{L^{p',1}}&= \int_0^\infty x^{-1/p}(H_\alpha \varphi)^*(x)\, dx\lesssim \int_0^\infty x^{-1/p}\int_0^{1/x} \varphi^*(t)\, dt\, dx \\
	&\phantom{=}+ \int_0^\infty x^{-2-1/p}\int_{1/x}^\infty \frac{\varphi^*(t)}{t^2}\, dt\, dx.
	\end{align*}
	On the one hand, since $\varphi^*$ is decreasing, $\varphi^*(0)=\sup_{x\in \R_+}|\varphi(x)|$, and $\Vert\varphi^*\Vert_1=\Vert\varphi\Vert_1$ (see \cite{BSbook,Grafakosbook}),
	\begin{align*}
	\int_0^\infty x^{-1/p}\int_0^{1/x} \varphi^*(t)\, dt\, dx &=\int_0^\infty \varphi^*(t)\, \int_0^{1/t}x^{-1/p}\, dx\, dt \asymp \int_0^\infty t^{-1/p'}\varphi^*(t)\, dt
	\\
	&= \sup_{x\in \R_+}|\varphi(x)| + \int_1^\infty \varphi^*(t)\, dt\lesssim \sup_{x\in \R_+}|\varphi(x)|  +\Vert \varphi\Vert_1 \\
	&\lesssim \sup_{x\in \R_+}|\varphi(x)|+\sup_{x\in \R_+}|x^2\varphi(x)|,
	\end{align*}
On the other hand, similarly as before,
	\begin{align*}
	\int_0^\infty x^{-2-1/p}\int_{1/x}^\infty \frac{\varphi^*(t)}{t^2}\, dt\, dx&=\int_0^\infty \frac{\varphi^*(t)}{t^2}\int_{1/t}^\infty x^{-2-1/p}\, dx\, dt\asymp \int_0^\infty t^{-1/p'}\varphi^*(t)\, dt\\
	&\leq \sup_{x\in \R_+}|\varphi(x)|+\int_1^\infty \varphi^*(t)\,dt \lesssim \sup_{x\in \R_+}|\varphi(x)| + \sup_{x\in \R_+}|x^2\varphi(x)|. 
	\end{align*}
	Combining all estimates, we get
	\[
	|\langle H_\alpha f,\varphi \rangle|\leq C_{p,q}\Vert f\Vert_{L^{p,q}} \bigg( \sup_{x\in \R_+}|\varphi(x)| + \sup_{x\in \R_+}|x^2\varphi(x)|\bigg),
	\]
	which yields the desired result.
	\end{proof}

\section{Boas' conjecture}\label{SECmainresults}

The goal of this section is to prove Theorems~\ref{THMmainlebesgue}~and~\ref{THMmainlorentz}. The approaches we follow are similar to those considered in \cite{sagherboas} and \cite{bootonlorentz}, respectively. It is worth emphasizing, as mentioned at the beginning of Section~\ref{SECdefinitenessHa}, that the inversion formula \eqref{EQinversion} holds for $GM$ functions from the weighted Lebesgue space $L^q_{t(p,q)}$ with $1\leq q\leq \infty$ and $\dfrac{1}{\alpha+3/2}<p<\infty$ (and thus, also for those from the Lorentz space $L^{p,q}$ with $1\leq q\leq \infty$ and $1<p<\infty$, by Theorem~\ref{THMbooton}).

\subsection{Weighted Lebesgue norm inequalities}

First of all, we prove a Pitt-type inequality for the Hankel transform of $GM$ functions that  includes  the cases $q=1,\infty$ (for the case $1<q<\infty$ this was proved in \cite{DCGT,miowni}).

\begin{theorem}\label{THMpitt1infty}
	Let $f\in GM$, $1\leq q\leq \infty$, and $\dfrac{1}{\alpha+3/2}<p<\infty$. If $f\in L^q_{t(p,q)}$, then $H_\alpha f\in  L^q_{t(p',q)}$ and
	\[
	\| H_\alpha f\|_{L^q_{t(p',q)}}\lesssim \| f\|_{L^q_{t(p,q)}}.
	\]
\end{theorem}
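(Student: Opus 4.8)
The plan is to prove Theorem~\ref{THMpitt1infty} by splitting the integral defining $H_\alpha f(y)$ at the points $x=1/y$ and handling the two pieces separately. For $x\le 1/y$ we use the bound $|J_\alpha(xy)|\lesssim (xy)^\alpha$ from \eqref{EQbesselestimate}, while for $x>1/y$ we integrate by parts, using the primitive $K_y^\alpha$ and its uniform estimate $|K_y^\alpha(x)|\lesssim y^{-3/2}$ from \eqref{EQprimitiveest}, thereby transferring the oscillation of the Bessel function onto $df$. This is precisely the strategy used in \cite{DCGT,miowni} for $1<q<\infty$; the point here is to push it through the endpoint cases $q=1$ and $q=\infty$. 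Thus I would write
\[
|H_\alpha f(y)|\lesssim y^{\alpha+1/2}\int_0^{1/y}|f(x)|x^{\alpha+1/2}\,dx+\int_0^\infty \min\{1,(xy)^{-1}\}|df(x)|=:y^{\alpha+1/2}\,P(y)+R(y),
\]
where in the boundary terms from the integration by parts we have used that $f$ vanishes at infinity (Lemma~\ref{LEMgmproperties}(iv), valid here since $f\in L^q_{t(p,q)}$) and that $x^{1/2}K_y^\alpha(x)\to 0$ as $x\to0$.

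Next I would estimate the two resulting operators on $L^q_{t(p',q)}$. For the first term, $y\mapsto y^{\alpha+1/2}\int_0^{1/y}|f(x)|x^{\alpha+1/2}\,dx$ is an averaging-type (Hardy) operator; its boundedness from $L^q_{t(p,q)}$ to $L^q_{t(p',q)}$ under the constraint $\tfrac1{\alpha+3/2}<p<\infty$ is a standard weighted Hardy inequality, and the endpoint cases $q=1,\infty$ are routine by direct computation (Minkowski's integral inequality for $q=1$, a pointwise bound for $q=\infty$). For the second term $R(y)$, I would use the $GM$ property in the form of Lemma~\ref{LEMgmproperties}(iii): $\int_t^\infty x^\gamma|df(x)|\lesssim\int_{t/\lambda}^\infty x^{\gamma-1}|f(x)|\,dx$ (with $\gamma=0$ and $\gamma=-1$ for the two ranges of $x$), which reduces $R(y)$ to another Hardy-type expression in $|f|$, namely $\int_0^{1/y}\tfrac{|f(x)|}{?}\,dx$ plus $y^{-1}\int_{1/y}^\infty\tfrac{|f(x)|}{x}\,dx$ up to constants; these are again handled by weighted Hardy inequalities in the same parameter range. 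One must be slightly careful to apply Lemma~\ref{LEMgmproperties}(iii) with the dilation constant $\lambda$ and then absorb the $\lambda$-dilation into the weight, which is harmless since $x^{q/p'-1}$ is a power weight.

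The main obstacle I anticipate is not the analysis but bookkeeping at the endpoints: verifying that the exponents in the weighted Hardy inequalities are strictly admissible exactly when $\tfrac1{\alpha+3/2}<p<\infty$ (this is where the lower restriction on $p$ enters, through the convergence of $\int_0^{1/y}x^{\alpha+1/2}\cdot x^{-1/p}\,dx$ near $0$), and checking that all boundary terms in the integration by parts genuinely vanish — for $q=\infty$ one does not even have $f\in L^1$ near infinity a priori, so one leans on Lemma~\ref{LEMgmproperties}(iv) together with the already-established convergence of the improper integral from Lemma~\ref{LEMimproperdef}. A secondary technical point is that the pointwise bound for $|H_\alpha f(y)|$ above should really be justified through the truncated integrals $\int_M^N$ and then passed to the limit using Lemma~\ref{LEMimproperdef}, so that $H_\alpha f$ is the genuine pointwise limit \eqref{EQlimit} rather than only a distribution. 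Once these technicalities are dispatched, combining the two Hardy estimates gives $\|H_\alpha f\|_{L^q_{t(p',q)}}\lesssim\|f\|_{L^q_{t(p,q)}}$, which is the claim.
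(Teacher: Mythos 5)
Your overall route is the right one and is in fact the paper's: split at $t=1/y$, use $|J_\alpha(xy)|\lesssim (xy)^{\alpha}$ on $(0,1/y)$, integrate by parts on the tail using $K_y^\alpha$ and \eqref{EQprimitiveest}, convert $|df|$ into $|f(x)|/x$ via general monotonicity, and finish with weighted Hardy inequalities (the paper does exactly this for $1\le q<\infty$, treating $q=\infty$ by a direct computation without Hardy). However, your central displayed pointwise bound is wrong as written. The integration by parts you describe takes place only on $(1/y,\infty)$, so its output is $y^{-1}|f(1/y)|+y^{-1}\int_{1/y}^{\infty}|df(x)|$ (the boundary term sits at $x=1/y$; the remark about $x^{1/2}K_y^\alpha(x)\to 0$ as $x\to 0$ plays no role here), not $R(y)=\int_0^\infty \min\{1,(xy)^{-1}\}|df(x)|$. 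The extraneous piece $\int_0^{1/y}|df(x)|$ hidden in your $R(y)$ is genuinely harmful: it can be infinite for admissible $f$ (take $f(x)=x^{-\beta}$ near the origin with $0<\beta<1/p$, suitably decaying at infinity; this is a decreasing, hence $GM$, function in $L^q_{t(p,q)}$ with $\int_0^{1/y}|df|=\infty$), and Lemma~\ref{LEMgmproperties}(iii) cannot absorb it, since that lemma controls only tail integrals $\int_t^\infty x^\gamma|df(x)|$. This is precisely where your reduction of $R(y)$ to a Hardy-type expression breaks down, as the question-mark placeholder in your own formula signals.

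The repair is the paper's estimate: since $f$ vanishes at infinity (Lemma~\ref{LEMgmproperties}(iv), applicable because $f\in L^q_{t(p,q)}$), one has $|f(1/y)|\le \int_{1/y}^\infty|df(x)|$, so the tail contributes $\lesssim y^{-1}\int_{1/y}^\infty|df(x)|\lesssim y^{-1}\int_{1/(\lambda y)}^\infty |f(x)|\,x^{-1}dx$ by Lemma~\ref{LEMgmproperties}(iii) with $\gamma=0$ alone (no $\gamma=-1$ is needed, and no integral of $|df|$ over $(0,1/y)$ ever appears). Hence $|H_\alpha f(y)|\lesssim y^{\alpha+1/2}\int_0^{1/y}x^{\alpha+1/2}|f(x)|\,dx + y^{-1}\int_{1/(\lambda y)}^\infty |f(x)|\,x^{-1}dx$, and after the substitution $y\mapsto 1/y$ these are exactly the two weighted Hardy operators, the first admissible precisely because $p>\frac{1}{\alpha+3/2}$ and the second because $p<\infty$. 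With this correction (and your sensible remark that the bound should be justified on the truncated integrals and passed to the limit via Lemma~\ref{LEMimproperdef}), the rest of your plan goes through and coincides with the paper's proof.
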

In order to prove Theorem~\ref{THMpitt1infty} we will need Hardy's inequalities \cite[p. 20]{zygmund}.
\begin{thmletter}
	Let $1\leq q<\infty$ and $\sigma>0$. Then, for every measurable $f$,
	\[
	\int_0^\infty \bigg( y^{-\sigma} \int_0^y |f(x)|\, \frac{dx}{x} \bigg)^q \frac{dy}{y} \lesssim \int_0^\infty \big( x^{-\sigma} |f(x)|\big)^q\, \frac{dx}{x},
	\]
	and
	\[
	\int_0^\infty \bigg( y^{\sigma} \int_y^\infty  |f(x)|\, \frac{dx}{x} \bigg)^q \frac{dy}{y} \lesssim  \int_0^\infty \big( x^{\sigma} |f(x)|\big)^q\, \frac{dx}{x},
	\]
	where the involved constants do not depend on $f$.
\end{thmletter}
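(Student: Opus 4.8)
The plan is to recognize both inequalities as instances of Minkowski's integral inequality, after exploiting the dilation invariance of the measure $dx/x$. The key observation is that each inner integral is a convolution on the multiplicative group $(\R_+,dx/x)$, so the estimate reduces to Young's inequality for convolution against an $L^1$ kernel; the role of the hypothesis $\sigma>0$ is precisely to guarantee that this kernel is integrable, so that the convolution constant is finite.

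For the first inequality I would substitute $x=yt$ in the inner integral, which turns $\int_0^y |f(x)|\,dx/x$ into $\int_0^1 |f(yt)|\,dt/t$. Thus the left-hand side is the $L^q(dy/y)$-norm of the function $y^{-\sigma}\int_0^1 |f(yt)|\,dt/t$. Applying Minkowski's integral inequality in the variable $y$ (which, for $q=1$, is simply Tonelli's theorem) pulls the norm inside the $t$-integral, leaving $\int_0^1 \|y^{-\sigma}f(yt)\|_{L^q(dy/y)}\,dt/t$. A further change of variables $u=yt$, together with the invariance of $dy/y$ under scaling, gives the identity $\|y^{-\sigma}f(yt)\|_{L^q(dy/y)}=t^{\sigma}\|u^{-\sigma}f(u)\|_{L^q(du/u)}$. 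Hence the whole expression is bounded by $\big(\int_0^1 t^{\sigma}\,dt/t\big)\,\|u^{-\sigma}f\|_{L^q(du/u)}=\sigma^{-1}\|x^{-\sigma}f\|_{L^q(dx/x)}$, which is exactly the claimed bound, with explicit constant $1/\sigma$.

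For the second inequality I would proceed symmetrically, substituting $x=yt$ but now over the range $t\in(1,\infty)$, so that $\int_y^\infty |f(x)|\,dx/x=\int_1^\infty |f(yt)|\,dt/t$. The identical Minkowski-plus-scaling computation, with $\|y^{\sigma}f(yt)\|_{L^q(dy/y)}=t^{-\sigma}\|u^{\sigma}f(u)\|_{L^q(du/u)}$, now produces the convolution factor $\int_1^\infty t^{-\sigma}\,dt/t=\sigma^{-1}$, and the companion inequality follows with the same constant.

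The argument presents no genuine obstacle beyond bookkeeping. The only place the hypothesis $\sigma>0$ is actually used is the finiteness of $\int_0^1 t^{\sigma-1}\,dt$ (respectively $\int_1^\infty t^{-\sigma-1}\,dt$), which is where the weight enters; and the only structural ingredient is Minkowski's integral inequality, valid throughout the range $1\le q<\infty$. If one wished to avoid Minkowski entirely, the same bounds follow from Schur's test applied to the corresponding homogeneous kernel with a power test function, but the dilation argument above is cleaner and yields the sharp constant $1/\sigma$ directly.
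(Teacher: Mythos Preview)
Your argument is correct: the substitution $x=yt$ together with Minkowski's integral inequality and the scale invariance of $dy/y$ is the standard clean proof of Hardy's inequalities in this form, and it produces the sharp constant $1/\sigma$ exactly as you indicate.

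There is nothing to compare against, however, because the paper does not supply a proof of this statement. It is recorded as a known theorem (labelled as a lettered ``Theorem'' rather than a numbered one) with a citation to Zygmund's \emph{Trigonometric Series}, p.~20, and is then invoked as a tool in the proof of Theorem~\ref{THMpitt1infty}. Your proposal therefore goes beyond what the paper does, supplying an actual argument where the paper only gives a reference.
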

\begin{proof}[Proof of Theorem~\ref{THMpitt1infty}]
	We proceed similarly as in Theorem 4 of \cite{bootonlorentz}, where an analogous result was proved for sine and cosine transforms. First of all, it follows by Lemma~\ref{LEMimproperdef} that $H_\alpha f$ is well defined as an improper integral. We now apply the estimate \eqref{EQbesselestimate} to obtain, for any $t>0$,
	\begin{align*}
	|H_\alpha f(y)|\lesssim y^{\alpha+1/2}\int_{0}^{t}  x^{\alpha+1/2}|f(x)|\, dx+y^{1/2}\bigg|\int_{t}^\infty x^{1/2}f(x)J_\alpha(xy)\, dx\bigg|.
	\end{align*}
	Integration by parts, the estimate \eqref{EQprimitiveest}, and  the fact that $f$ vanishes at infinity (which follows from $f\in L^{q}_{t(p,q)}$ and $\dfrac{1}{\alpha+3/2}<p<\infty$, by  Lemma~\ref{LEMgmproperties}) imply that
	\begin{align*}
	y^{1/2}\bigg|\int_{t}^\infty x^{1/2}f(x)J_\alpha(xy)\, dx\bigg|\lesssim \frac{1}{y}|f(t)|+\frac{1}{y}\int_{t}^\infty |df(x)| \lesssim \frac{1}{y}\int_{t}^\infty |df(x)|,
	\end{align*}
	where in the last inequality we used the estimate $|f(t)|\leq \int_t^\infty |df(x)|$, which is valid since $f$ vanishes at infinity.	Thus, we deduce by (iii) of Lemma~\ref{LEMgmproperties},
	\[
	|H_\alpha f(y)|\lesssim  y^{\alpha+1/2}\int_{0}^{t}  x^{\alpha+1/2}|f(x)|\, dx+\frac{1}{y}\int_{t/\lambda}^\infty \frac{|f(x)|}{x}\, dx.
	\]
	Note that since $f\in L^{q}_{t(p,q)}$, $1\leq q\leq \infty$, and $\dfrac{1}{\alpha+3/2}<p<\infty$, the right-hand side is finite, by H\"older's inequality. Hence, by letting $t=1/y$ we obtain
	\begin{align*}
	\| H_\alpha f\|_{L^q_{t(p',q)}} &=\bigg( \int_0^\infty \Big( y^{1/p'} |H_\alpha f(y)|\Big)^q \frac{dy}{y}\bigg)^{1/q} \\
	&\lesssim \bigg(\int_0^\infty   \bigg( y^{\alpha+1/2+1/p'}  \int_0^{1/y}  x^{\alpha+1/2}|f(x)|\, dx  \bigg)^q \frac{dy}{y} \bigg)^{1/q}\\
	& \phantom{=}+ \bigg(\int_0^\infty   \bigg(  y^{-1/p}\int_{1/(\lambda y)}^\infty|f(x)| \frac{dx}{x}\bigg)^q \frac{dy}{y} \bigg)^{1/q}\\
	&=  \bigg(\int_0^\infty   \bigg( y^{-\alpha-1/2-1/p'}  \int_0^{y}  x^{\alpha+3/2}|f(x)|\, \frac{dx}{x}  \bigg)^q \frac{dy}{y} \bigg)^{1/q}\\
	&\phantom{=}+\bigg(\int_0^\infty   \bigg(  y^{1/p}\int_{y/\lambda}^\infty|f(x)| \frac{dx}{x}\bigg)^q \frac{dy}{y} \bigg)^{1/q},
	\end{align*}
	where in the last inequality we applied the change of variables $y\to 1/y$. On the one hand, since $p>\dfrac{1}{\alpha+3/2}$, Hardy's inequality yields
	\[
	\bigg(\int_0^\infty   \bigg( y^{-\alpha-1/2-1/p'}  \int_0^{y}  x^{\alpha+3/2}|f(x)|\, \frac{dx}{x}  \bigg)^q \frac{dy}{y} \bigg)^{1/q} \lesssim \bigg(\int_0^\infty \big( x^{1/p}|f(x)|\big)^q \frac{dx}{x}\bigg)^{1/q}=\| f\|_{L^q_{t(p,q)}},
	\]
	whilst on the other hand, again by Hardy's inequality,
	\[
	\bigg(\int_0^\infty   \bigg(  y^{1/p}\int_{y/\lambda}^\infty|f(x)| \frac{dx}{x}\bigg)^q \frac{dy}{y} \bigg)^{1/q} \lesssim  \bigg(\int_0^\infty \big( x^{1/p}|f(x)|\big)^q \frac{dx}{x}\bigg)^{1/q}=\| f\|_{L^q_{t(p,q)}}.
	\]
	The case $q=\infty$ is similar and is omitted (in fact, this complementary case is dealt with in full detail in the case of Lorentz spaces, in Theorem~\ref{THMlorentz} below; note that Hardy's inequalities are not needed in this case).
\end{proof}

\begin{lemma}\label{LEMbddnesslebesgue}
	Let $f\in L^q_{t(p,q)}$, with
	\begin{enumerate}[label=\textnormal{(}\roman{*}\textnormal{)}]
	\item $q=1$ and $\dfrac{1}{\alpha+3/2}\leq p<\infty$, or
	\item $1<q\leq \infty$ and $\dfrac{1}{\alpha+3/2}<p<\infty$.
\end{enumerate}
Then the inequality
	\begin{equation}
	\label{EQmaximallebesgue}
	 \Vert M\Phi_{H_\alpha f} \Vert_{L^{p',q}} =\Vert M\Phi_{H_\alpha f}\Vert_{L^q_{t(p',q)}}\lesssim  \Vert f\Vert_{L^q_{t(p,q)}}.
	\end{equation}
holds for any $\varphi\in \mathcal{H}_\alpha$.
\end{lemma}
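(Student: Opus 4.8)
The plan is to express the maximal averaging operator applied to $H_\alpha f$ through a (maximal) Hardy operator acting on $f$ itself, and then invoke Hardy's inequalities. First, since $\varphi\in\mathcal H_\alpha$ forces $\varphi_t\in\mathcal H_\alpha$ with $H_\alpha\varphi_t(x)=H_\alpha\varphi(tx)$, combining Parseval's formula \eqref{EQparseval} with the definition \eqref{EQhankeldefdistribution} and Proposition~\ref{PROPdistrweightedlebesgue} (which lets $f$ act as an element of $\mathcal H_\alpha'$) yields
\[
\Phi_{H_\alpha f}(t)=\int_0^\infty f(x)\,H_\alpha\varphi(tx)\,dx .
\]
Since $H_\alpha$ is an automorphism of $\mathcal H_\alpha$, the function $\psi:=H_\alpha\varphi$ again lies in $\mathcal H_\alpha$; taking $n=0$ in the seminorms \eqref{EQseminorms} (and $n=0$, $m$ large) gives, for every $N\ge0$, the decay $|\psi(y)|\lesssim_{N,\varphi}\min\{y^{\alpha+1/2},y^{-N}\}$.

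Splitting the integral at $x=1/t$ and using this decay with $N$ large gives
\[
|\Phi_{H_\alpha f}(t)|\lesssim t^{\alpha+1/2}\int_0^{1/t}|f(x)|x^{\alpha+1/2}\,dx+t^{-N}\int_{1/t}^\infty|f(x)|x^{-N}\,dx ,
\]
the first term being finite exactly because $p>\tfrac1{\alpha+3/2}$ (Hölder, as in Lemma~\ref{LEMimproperdef}). Taking the supremum over $s\ge t$ and substituting $u=1/s$ gives $M\Phi_{H_\alpha f}(t)\lesssim\sup_{0<u\le1/t}\bigl(\mathcal A(u)+\mathcal B(u)\bigr)$, where $\mathcal A(u):=u^{-\alpha-1/2}\int_0^u|f(x)|x^{\alpha+1/2}\,dx$ and $\mathcal B(u):=u^N\int_u^\infty|f(x)|x^{-N}\,dx$. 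Writing out the $L^q_{t(p',q)}$ norm and substituting $v=1/t$ then reduces the lemma, when $1\le q<\infty$, to the two maximal Hardy estimates
\[
\int_0^\infty v^{-q/p'-1}\bigl(\sup_{u\le v}\mathcal A(u)\bigr)^q dv\lesssim\|f\|_{L^q_{t(p,q)}}^q ,
\]
together with the analogous one for $\mathcal B$.

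The crucial point is that the supremum is harmless: a Hardy average cannot spike and then vanish, because $\mathcal A(w)\ge(u/w)^{\alpha+1/2}\mathcal A(u)$ for $w\ge u$ and $\mathcal B(w)\ge(w/u)^N\mathcal B(u)$ for $w\le u$. Thus, if $\sup_{u\le v}\mathcal A(u)$ is essentially attained at $u^\ast$, then $\mathcal A\gtrsim\mathcal A(u^\ast)$ on the whole dyadic interval $[u^\ast,2u^\ast]\subset(0,v]$ (and symmetrically for $\mathcal B$). A routine dyadic decomposition --- using $q/p'>0$, so that the geometric series of the weight converge --- then gives $\int_0^\infty v^{-q/p'-1}\bigl(\sup_{u\le v}\mathcal A(u)\bigr)^q dv\lesssim\int_0^\infty v^{-q/p'-1}\mathcal A(v)^q dv$, and likewise for $\mathcal B$. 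The remaining non-maximal inequalities are precisely the Hardy inequalities stated before Theorem~\ref{THMpitt1infty}: for $\mathcal A$ one applies the first with exponent $\sigma=\tfrac1{p'}+\alpha+\tfrac12>0$, for $\mathcal B$ the second with $\sigma=N-\tfrac1{p'}$ (positive for $N$ large), and both right-hand sides collapse to $\int_0^\infty x^{q/p-1}|f(x)|^q dx$. For $q=\infty$ no Hardy inequality is needed: $\|M\Phi_{H_\alpha f}\|_{L^\infty_{t(p',\infty)}}\lesssim\sup_{u>0}u^{-1/p'}\bigl(\mathcal A(u)+\mathcal B(u)\bigr)$, and inserting $|f(x)|\le\|f\|_{L^\infty_{t(p,\infty)}}x^{-1/p}$ and integrating the resulting power functions (all exponents admissible since $p>\tfrac1{\alpha+3/2}$) gives the bound directly.

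I expect the main obstacle to be precisely this maximal character of $M\Phi$: unlike in Theorem~\ref{THMpitt1infty}, one cannot bound $|\Phi_{H_\alpha f}|$ pointwise and quote Hardy, but must first absorb the supremum, which is exactly where the structural ``no sharp spike'' property of Hardy averages is used. Secondary care is needed to justify the Parseval step inside $\mathcal H_\alpha'$ and to treat the borderline endpoints (notably $q=1$ with $p=\tfrac1{\alpha+3/2}$), where the Hardy exponent degenerates and one must argue by hand --- using, in the situations where the lemma is applied, that $f$ is general monotone.
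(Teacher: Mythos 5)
Your proposal is essentially correct, but it takes a genuinely different route from the paper. The paper's proof is a two-line reduction to interpolation, exactly as in \cite{sagherboas}: H\"older's inequality gives $|\Phi_{H_\alpha f}(t)|\le t^{-1/p'}\Vert f\Vert_{L^q_{t(p,q)}}\Vert H_\alpha\varphi\Vert_{L^{q'}_{t(p',q')}}$, hence $T\colon f\mapsto M\Phi_{H_\alpha f}$ maps $L^q_{t(p,q)}$ boundedly into $L^{p',\infty}$, and the Stein--Weiss interpolation theorem with change of measures \cite{SWinterpolation} (fixing $q$, varying $p$) upgrades this to $T\colon L^{q}_{t(p,q)}\to L^{p',q}$. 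You avoid interpolation altogether: pointwise decay of $H_\alpha\varphi$ extracted from the seminorms, a split of $\langle f,H_\alpha\varphi_t\rangle$ at $x=1/t$, absorption of the supremum defining $M\Phi$ through the rigidity of Hardy-type averages, and then Hardy's inequalities, in close parallel with the paper's proof of Theorem~\ref{THMpitt1infty}. Your key absorption step is sound: with $\beta=q/p'>0$, the dyadic argument based on $\mathcal{A}(w)\ge(u/w)^{\alpha+1/2}\mathcal{A}(u)$ for $w\ge u$ and $\mathcal{B}(w)\ge(w/u)^{N}\mathcal{B}(u)$ for $w\le u$ does yield $\int_0^\infty v^{-\beta-1}\bigl(\sup_{u\le v}\mathcal{A}(u)\bigr)^q\,dv\lesssim\int_0^\infty v^{-\beta-1}\mathcal{A}(v)^q\,dv$ and its analogue for $\mathcal{B}$; note only that your parenthetical $[u^\ast,2u^\ast]\subset(0,v]$ is neither true in general nor needed, since the comparison integral runs over all of $(0,\infty)$ (for $\mathcal{A}$ the spike propagates to the right, possibly past $v$, which is harmless). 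The identification of the $L^{p',q}$ and $L^q_{t(p',q)}$ norms is immediate because $M\Phi_{H_\alpha f}$ is nonincreasing. What each approach buys: yours is elementary and self-contained (only Hardy's inequalities, already quoted in the paper) and makes the dependence on $\varphi$ through finitely many seminorms explicit; the paper's is much shorter and treats all stated exponents and all $1\le q\le\infty$ uniformly.

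The one genuine limitation concerns the exponent range. Both your absorption step and your $q=\infty$ argument (which uses $t^{1/p'}\le u^{-1/p'}$ for $u\le 1/t$) require $1/p'>0$, i.e.\ $p>1$, whereas the statement also admits $\tfrac{1}{\alpha+3/2}\le p\le 1$ (the endpoint of case (i), and part of case (ii) when $\alpha>-1/2$). Your proposed remedy for these borderline cases, invoking general monotonicity of $f$, is not available: the lemma assumes only $f\in L^q_{t(p,q)}$. So, relative to the literal statement, those cases are not covered by your argument; it is the weak-type-plus-interpolation route of the paper that is set up to state them. That said, your proof does cover every case with $1/p'>0$, which are precisely the cases in which the left-hand side can be nontrivially finite: if $q/p'\le 0$, then, since $M\Phi_{H_\alpha f}$ is nonincreasing, the norm $\Vert M\Phi_{H_\alpha f}\Vert_{L^q_{t(p',q)}}$ diverges at the origin unless $\Phi_{H_\alpha f}$ vanishes identically. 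In particular, for the exponents arising in the proof of Theorem~\ref{THMmainlebesgue} with $1<p<\infty$, your argument gives a complete alternative proof.
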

\begin{remark}
	Given $\varphi:\R_+\to \C$, the operator $\Phi_g$ was defined in \eqref{EQdefauxiliary} for a given function $g$. However, if $\varphi\in \mathcal{H}_\alpha$ and $f$ is a function for which $H_\alpha f\in \mathcal{H}'_\alpha$, abusing of notation we may write
	\[
	\Phi_{H_\alpha f}(t)=\langle f,H_\alpha\varphi_t\rangle,
	\]
	as done in \eqref{EQmaximallebesgue}, taking into account the definition of $H_\alpha f$	\eqref{EQhankeldefdistribution}. This notation is adopted in what follows.
\end{remark}
\begin{proof}[Proof of Lemma~\ref{LEMbddnesslebesgue}]
	The proof is carried out exactly in the same lines as \cite[Theorem 3.1]{sagherboas}. Indeed, H\"older's inequality implies
	\[
	|\Phi_{H_\alpha f}(t)|\leq t^{-1/p'}\Vert f\Vert_{L^q_{t(p,q)}}\Vert H_\alpha\varphi \Vert_{L^{q'}_{t(p',q')}},
	\]
	so that the operator $T:f\mapsto M\Phi_{H_\alpha f}$ maps $L^q_{t(p,q)}$ into $L^{p',\infty}$. Fixing $q$ and interpolating between different values of $p$, the interpolation theorem with change of measures by Stein and Weiss (cf. \cite{SWinterpolation}) yields
	\[
	T:L^{q}_{t(p,q)}\to L^{p',q},
	\]
	as desired.
\end{proof}

Finally, we are in a position to prove our main result concerning weighted Lebesgue spaces.
\begin{proof}[Proof of Theorem~\ref{THMmainlebesgue}]
	It follows from Theorem~\ref{THMpitt1infty}  that
	\[
	\Vert H_\alpha f\Vert_{L^{q}_{t(p',q)}}\lesssim \Vert f\Vert_{L^q_{t(p,q)}}, \qquad 1\leq q\leq \infty, \qquad \frac{1}{\alpha+3/2}<p<\infty.
	\]
By Lemma~\ref{LEMbddnesslebesgue} (with $H_\alpha f$ in place of $f$), we get
\[
\Vert M\Phi_{f} \Vert_{L^{q}_{t(p,q)}}\lesssim \Vert H_\alpha f\Vert_{L^{q}_{t(p',q)}},
\]
for any $\varphi\in \mathcal{H}_\alpha$. Finally, Theorem~\ref{THMaverageoperator} together with the appropriate choice of $\varphi$ yields
\[
\Vert f\Vert_{L^q_{t(p,q)}}\lesssim \Vert M\Phi_{f} \Vert_{L^{q}_{t(p,q)}},
\]
with all the estimates valid for the ranges $1\leq q\leq \infty$ and $\dfrac{1}{\alpha+3/2}<p<\infty$. The hypothesis $x^rf(x)\to 0$ as $x\to 0$ needed to apply Theorem~\ref{THMaverageoperator} follows from the fact that $f\in L^q_{t(p,q)}$ and Lemma~\ref{LEMgmproperties}.
\end{proof}
\begin{remark}
	Note that in Theorem~\ref{THMaverageoperator}, rather than choosing $\varphi=\chi_{(0,1)}$, we allow $\varphi$ to be supported on $(0,1+\varepsilon/2)$, so that it is also valid for some choice of $\varphi\in \mathcal{H}_\alpha$, which is needed to prove Theorem~\ref{THMmainlebesgue}.
\end{remark}
\subsection{Lorentz norm inequalities}
In order to prove Theorem~\ref{THMmainlorentz}, we need to establish some auxiliary estimates on Lorentz norms.

\begin{theorem}\label{THMlorentz}
	Let $f\in GM$, and assume that $f\in L^{p,q}$ with $1<p<\infty$ and $1\leq q\leq \infty$. Then $H_\alpha f\in L^{p',q}$, and moreover
	\[
	\Vert H_\alpha f\Vert_{L^{p',q}}\lesssim \Vert f\Vert_{L^{p,q}}.
	\]
\end{theorem}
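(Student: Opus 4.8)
The plan is to reduce the Lorentz-norm estimate $\Vert H_\alpha f\Vert_{L^{p',q}}\lesssim \Vert f\Vert_{L^{p,q}}$ to the corresponding weighted-Lebesgue estimate already established in Theorem~\ref{THMpitt1infty}, by passing through Theorem~\ref{THMbooton}. Since $f\in GM$, Theorem~\ref{THMbooton} gives $\Vert f\Vert_{L^{p,q}}\asymp \Vert f\Vert_{L^q_{t(p,q)}}$ for $1<p<\infty$, $1\leq q\leq\infty$; in particular $f\in L^q_{t(p,q)}$, so by Lemma~\ref{LEMimproperdef} the Hankel transform $H_\alpha f$ is well defined as an improper integral, and by the distributional results of Section~\ref{SECdefinitenessHa} it also agrees with the distributional $H_\alpha f\in\mathcal H_\alpha'$. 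Then Theorem~\ref{THMpitt1infty} yields $H_\alpha f\in L^q_{t(p',q)}$ with $\Vert H_\alpha f\Vert_{L^q_{t(p',q)}}\lesssim \Vert f\Vert_{L^q_{t(p,q)}}$ (note $1<p<\infty$ forces $p'>1>\frac{1}{\alpha+3/2}$, so the hypothesis on $p$ in Theorem~\ref{THMpitt1infty} is met, with $p$ there replaced by $p'$).

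The remaining, and only substantive, point is to upgrade $\Vert H_\alpha f\Vert_{L^q_{t(p',q)}}$ to $\Vert H_\alpha f\Vert_{L^{p',q}}$; for this it suffices, by Theorem~\ref{THMbooton} applied to $H_\alpha f$, to verify that $H_\alpha f\in GM$. This is where the work lies. I would argue directly from the pointwise bound obtained in the proof of Theorem~\ref{THMpitt1infty}, namely
\[
|H_\alpha f(y)|\lesssim y^{\alpha+1/2}\int_0^{1/y}x^{\alpha+1/2}|f(x)|\,dx+\frac1y\int_{1/(\lambda y)}^\infty\frac{|f(x)|}{x}\,dx,
\]
together with the corresponding bound for $\int_x^{2x}|dH_\alpha f(y)|$ obtained by differentiating under the integral sign in \eqref{EQhankeltr} (using $\frac{d}{dy}[\sqrt{xy}J_\alpha(xy)]$, splitting at $x=1/y$, and integrating by parts on the tail via \eqref{EQprimitiveest} and Lemma~\ref{LEMgmproperties}(iii) as before). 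These estimates should produce a bound of the form $\int_x^{2x}|dH_\alpha f(y)|\lesssim \int_{x/\lambda'}^{\lambda' x}\frac{|H_\alpha f(y)|}{y}\,dy$ — or more precisely a bound in which the right-hand side is controlled by the same averages of $|f|$ that control $|H_\alpha f|$ from below on a suitable subinterval, via Lemma~\ref{LEMgmproperties}(ii) applied to $f\in GM$. The natural route is: bound the variation of $H_\alpha f$ on $[x,2x]$ by averages of $|f|$ over dyadic neighbourhoods of $1/x$, and bound those same averages from below by $|H_\alpha f|$ on a neighbourhood of $1/x$ by inverting the roles (using that $H_\alpha$ is an involution on $\mathcal H_\alpha'$, so $f=H_\alpha(H_\alpha f)$, and running the same pointwise estimate backwards).

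Alternatively — and this is probably cleaner — one can appeal to the already-established machinery: Lemma~\ref{LEMbddnesslebesgue} (applied with $H_\alpha f$ in place of $f$, legitimate since we have just shown $H_\alpha f\in L^q_{t(p',q)}$) gives $\Vert M\Phi_{f}\Vert_{L^q_{t(p,q)}}\lesssim \Vert H_\alpha f\Vert_{L^q_{t(p',q)}}$ for a suitable $\varphi\in\mathcal H_\alpha$, and then one observes that the maximal operator $M\Phi$ produces a \emph{decreasing} function, for which $L^q_{t(p',q)}$ and $L^{p',q}$ coincide by definition; combined with the pointwise control $H_\alpha f(y)\lesssim$ (decreasing majorant) coming from the displayed bound above, one gets $\Vert H_\alpha f\Vert_{L^{p',q}}\lesssim \Vert M\Phi_{H_\alpha f}\Vert_{L^{p',q}}\lesssim\Vert f\Vert_{L^{p,q}}$ after using Theorem~\ref{THMbooton} once more on $f$ to pass from $L^{p,q}$ to $L^q_{t(p,q)}$. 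The main obstacle I anticipate is bookkeeping the $\lambda$-dilations and the endpoint cases $q=1,\infty$ cleanly — in particular handling the $q=\infty$ case by hand (where Hardy's inequalities are unavailable) exactly as flagged in the proof of Theorem~\ref{THMpitt1infty} — rather than any conceptual difficulty; the structural content is entirely contained in Theorems~\ref{THMpitt1infty},~\ref{THMbooton} and Lemma~\ref{LEMbddnesslebesgue}.
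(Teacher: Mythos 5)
Your first paragraph is fine: Theorem~\ref{THMbooton} applied to $f$ together with Theorem~\ref{THMpitt1infty} does give $\Vert H_\alpha f\Vert_{L^q_{t(p',q)}}\lesssim \Vert f\Vert_{L^q_{t(p,q)}}\asymp \Vert f\Vert_{L^{p,q}}$. The gap is in the upgrade from the weighted Lebesgue norm of $H_\alpha f$ to its Lorentz norm. Your main route rests on showing $H_\alpha f\in GM$, and this is false in general: take $f=\chi_{(0,1)}$, which is $GM$ and belongs to every $L^{p,q}$, while $H_{-1/2}f(y)=\sqrt{2/\pi}\,\sin y/y$. For large $x$ the total variation of $\sin y/y$ on $[x,2x]$ is bounded below by a positive absolute constant (about $x/\pi$ oscillations of height $\gtrsim 1/x$ each), whereas $\int_{x/\lambda}^{\lambda x}|H_{-1/2}f(t)|\,t^{-1}\,dt\lesssim 1/x$, so \eqref{EQgmdefinition} fails for every choice of $C,\lambda$. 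Moreover, the sub-claim that averages of $|f|$ are controlled from below by $|H_\alpha f|$ on suitable intervals (``running the estimate backwards through inversion'') is exactly the hard direction of the Boas problem; in the paper it is obtained only through the maximal-operator machinery of Theorem~\ref{THMaverageoperator}, which in addition requires the function to be real-valued --- an assumption Theorem~\ref{THMlorentz} does not make. Your ``cleaner'' alternative inherits the same defect: the inequality $\Vert H_\alpha f\Vert_{L^{p',q}}\lesssim \Vert M\Phi_{H_\alpha f}\Vert_{L^{p',q}}$ does not follow from what you cite (the decreasing majorant of $|H_\alpha f|$ is expressed through averages of $f$, not of $H_\alpha f$); to compare $H_\alpha f$ with $M\Phi_{H_\alpha f}$ you would again need Theorem~\ref{THMaverageoperator} for $g=H_\alpha f$, i.e.\ precisely the $GM$ property of $H_\alpha f$ that fails, and Lemma~\ref{LEMbddnesslebesgue} with $H_\alpha f$ in place of $f$ controls $M\Phi_f$, not the norm of $H_\alpha f$.

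The repair --- and the paper's actual argument --- avoids comparing the two norms of $H_\alpha f$ for a general function altogether. The pointwise estimate you quote bounds $|H_\alpha f(y)|$ by a quantity which, for each fixed $t$, is \emph{nonincreasing} in $y$; hence it also bounds the decreasing rearrangement, $(H_\alpha f)^*(y)\lesssim \int_0^{t}|f(x)|\,dx+y^{-1}\int_{t/\lambda}^{\infty}|f(x)|\,x^{-1}\,dx$. Taking $t=1/y$ and computing the $L^{p',q}$ norm of this decreasing majorant via Hardy's inequalities (for $1\leq q<\infty$), or by the direct computation with $t$ integrated out (for $q=\infty$), gives $\Vert H_\alpha f\Vert_{L^{p',q}}\lesssim \Vert f\Vert_{L^q_{t(p,q)}}$, and one final application of Theorem~\ref{THMbooton} to $f$ (not to $H_\alpha f$) converts the right-hand side into $\Vert f\Vert_{L^{p,q}}$. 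In short: transfer the decreasing majorant to the rearrangement of $H_\alpha f$, rather than trying to transfer general monotonicity to $H_\alpha f$.
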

\begin{proof}
	First of all, we apply the estimate \eqref{EQbesselestimate} to obtain, for any $t>0$,
	\begin{align*}
	|H_\alpha f(y)|\lesssim \int_{0}^{t} |f(x)|\, dx+y^{1/2}\bigg|\int_{t}^\infty x^{1/2}f(x)J_\alpha(xy)\, dx\bigg|.
	\end{align*}
	Integration by parts, the estimate \eqref{EQprimitiveest}, and  the fact that $f$ vanishes at infinity (which follows from $f\in L^{q}_{t(p,q)}$ (cf. Theorem~\ref{THMbooton}) and Lemma~\ref{LEMgmproperties}) imply that
	\begin{align*}
	y^{1/2}\bigg|\int_{t}^\infty x^{1/2}f(x)J_\alpha(xy)\, dx\bigg|\lesssim \frac{1}{y}|f(t)|+\frac{1}{y}\int_{t/\lambda}^\infty \frac{|f(x)|}{x}\, dx,
	\end{align*}
	and thus we deduce
	\[
	(H_\alpha f)^*(y)\lesssim \int_{0}^{t} |f(x)|\, dx+\frac{1}{y}|f(t)| +\frac{1}{y}\int_{t/\lambda}^\infty \frac{|f(x)|}{x}\, dx\lesssim \int_{0}^{t} |f(x)|\, dx+\frac{1}{y}\int_{t/\lambda}^\infty \frac{|f(x)|}{x}\, dx,
	\]
	where the right-hand side is finite, since $f\in GM$ and $f\in L^{q}_{t(p,q)}$. From this point, the proof for the case $1\leq q<\infty$ is exactly the same as the one of \cite[Theorem 4]{bootonlorentz} (and similar to that of Theorem~\ref{THMpitt1infty}) and is therefore omitted. We give a detailed proof for the case $q=\infty$. Since $y^{1/p'}\asymp y^{1+1/(2p')}\int_0^{1/y} t^{-1/(2p')}\, dt$ for $y>0$, we have
	\begin{align*}
	y^{1/p'} (H_\alpha f)^*(y)&\asymp y^{1+1/(2p')}\int_0^{1/y}t^{-1/(2p')}(H_\alpha f)^*(y) \, dt \lesssim y^{1+1/(2p')}\int_0^{1/y}t^{-1/(2p')}\bigg(\int_0^t|f(x)|\, dx\bigg)\, dt\\
	&\phantom{=}+y^{1/(2p')}\int_{0}^{1/y}t^{-1/(2p')} \bigg(\int_{t/\lambda}^{\infty} \frac{|f(x)|}{x}\, dx\bigg)\, dt\\
	&\lesssim \Vert f\Vert_{L^\infty_{t(p,\infty)}}\bigg( y^{1+1/(2p')}\int_0^{1/y}t^{-1/(2p')}\bigg(\int_0^t x^{-1/p}\, dx \bigg)\, dt\\
	&\phantom{=}+y^{1/(2p')}\int_0^{1/y} t^{-1/(2p')}\bigg(\int_{t/\lambda}^{\infty}x^{-1-1/p}\,dx\bigg)\, dt\bigg)\\
	&\asymp \Vert f\Vert_{L^\infty_{t(p,\infty)}},
	\end{align*}
	i.e., $\Vert H_\alpha f\Vert_{L^{p',\infty}}\lesssim \Vert f\Vert_{L^\infty_{t(p,\infty)}}\asymp \Vert f\Vert_{L^{p,\infty}}$.
\end{proof}

We now prove a relation between the norm of $f$ from a certain Lorentz space and the corresponding norm of $M\Phi_{H_\alpha f}$ in the corresponding space (cf. \eqref{EQdefauxiliary} and \eqref{EQdefmaximal}), given $\varphi\in \mathcal{H}_\alpha$. This is an extension of the result by Y. Sagher for the Fourier transform given in \cite{sagherboas} and is proved in the same way.

\begin{lemma}\label{LEMaverageoperator}
	Let $1<p<\infty$ and $0< q\leq \infty$. If $f\in L^{p,q}$, then
	\begin{equation}
	\label{EQmaximal}
	\Vert M\Phi_{H_\alpha f}\Vert_{L^{p',q}}\leq C_{\varphi,p}\Vert f\Vert_{L^{p,q}}.
	\end{equation}
\end{lemma}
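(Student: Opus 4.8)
The plan is to mimic the proof of Lemma~\ref{LEMbddnesslebesgue} (which follows Sagher), with real (Marcinkiewicz-type) interpolation of Lorentz spaces in place of Stein--Weiss interpolation of weighted Lebesgue spaces. First I would record the two structural facts needed. Since $\varphi_t(u)=t^{-1}\varphi(u/t)$, the change of variables $x\mapsto tx$ in \eqref{EQhankeltr} gives $H_\alpha\varphi_t(y)=(H_\alpha\varphi)(ty)$; and since a dilation acts on Lorentz quasinorms by $\Vert g(t\,\cdot)\Vert_{L^{r,s}}=t^{-1/r}\Vert g\Vert_{L^{r,s}}$ (immediate from $d_{g(t\,\cdot)}(s)=t^{-1}d_g(s)$), one gets $\Vert H_\alpha\varphi_t\Vert_{L^{p',\infty}}=t^{-1/p'}\Vert H_\alpha\varphi\Vert_{L^{p',\infty}}$. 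Moreover $H_\alpha\varphi\in L^{p',\infty}$: the pointwise bounds $|H_\alpha\varphi(x)|\lesssim x^{\alpha+1/2}$ for $x\le 1$ and $|H_\alpha\varphi(x)|\lesssim x^{\gamma}$ for $x\ge1$, valid for every $\gamma>-1$ and established in the proof of Proposition~\ref{PROPdistrweightedlebesgue}, show (using $\alpha+1/2\ge0$ near the origin, and choosing $\gamma$ with $-1<\gamma<-1/p'$ for the tail) that $H_\alpha\varphi$ is bounded and $(H_\alpha\varphi)^*(x)\lesssim x^{\gamma}$ for large $x$, whence $\sup_x x^{1/p'}(H_\alpha\varphi)^*(x)<\infty$.

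Next I would combine these with Hölder's inequality on Lorentz spaces in the form $\int_0^\infty|fg|\le\Vert f\Vert_{L^{p,1}}\Vert g\Vert_{L^{p',\infty}}$ to obtain, for every $t>0$,
\[
|\Phi_{H_\alpha f}(t)|=\Big|\int_0^\infty f(x)\,H_\alpha\varphi_t(x)\,dx\Big|\le\Vert f\Vert_{L^{p,1}}\Vert H_\alpha\varphi_t\Vert_{L^{p',\infty}}=t^{-1/p'}\Vert H_\alpha\varphi\Vert_{L^{p',\infty}}\Vert f\Vert_{L^{p,1}}.
\]
Because $M\Phi_{H_\alpha f}$ is nonincreasing and $t\mapsto t^{-1/p'}$ is decreasing, taking $\sup_{x\ge t}$ preserves this estimate, so $M\Phi_{H_\alpha f}(t)\le C_{\varphi,p}\,t^{-1/p'}\Vert f\Vert_{L^{p,1}}$; in other words the sublinear operator $T\colon f\mapsto M\Phi_{H_\alpha f}$ satisfies $\Vert Tf\Vert_{L^{p',\infty}}\le C_{\varphi,p}\Vert f\Vert_{L^{p,1}}$, and this holds for every $p\in(1,\infty)$.

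Finally I would interpolate. Fix $p\in(1,\infty)$ and $0<q\le\infty$, pick $1<p_0<p<p_1<\infty$, and let $\theta\in(0,1)$ with $1/p=(1-\theta)/p_0+\theta/p_1$; then also $1/p'=(1-\theta)/p_0'+\theta/p_1'$. By the previous step $T$ maps $L^{p_i,1}\to L^{p_i',\infty}$ for $i=0,1$, so real interpolation of sublinear operators, together with $(L^{p_0,1},L^{p_1,1})_{\theta,q}=L^{p,q}$ and $(L^{p_0',\infty},L^{p_1',\infty})_{\theta,q}=L^{p',q}$ (see \cite{BSbook,Grafakosbook}), yields $T\colon L^{p,q}\to L^{p',q}$, which is exactly \eqref{EQmaximal}. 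I expect the only genuinely delicate point to be the membership $H_\alpha\varphi\in L^{p',\infty}$, since this is where the non-trivial pointwise bounds on $H_\alpha\varphi$ and the admissible range of $\gamma$ enter; the use of the monotonicity of $M\Phi_{H_\alpha f}$ (so that the weak-type bound for $\Phi_{H_\alpha f}$ transfers verbatim) and the interpolation step itself are routine.
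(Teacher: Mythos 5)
Correct, and essentially the paper's argument: the dilation identity $H_\alpha\varphi_t=(H_\alpha\varphi)(t\,\cdot)$ plus H\"older give $M\Phi_{H_\alpha f}(t)\lesssim t^{-1/p'}\Vert f\Vert$ for every $1<p<\infty$, i.e.\ a weak-type bound into $L^{p',\infty}$, after which interpolation in $p$ yields all $0<q\leq\infty$. The only cosmetic difference is that the paper pairs $\Vert f\Vert_{L^p}$ with $\Vert H_\alpha\varphi\Vert_{L^{p'}}$ (endpoints $L^{p,p}\to L^{p',\infty}$, interpolated via \cite[Theorem 26]{sagherinterpo}), while you pair $\Vert f\Vert_{L^{p,1}}$ with $\Vert H_\alpha\varphi\Vert_{L^{p',\infty}}$ and invoke the standard off-diagonal real-interpolation identities; your explicit check that $H_\alpha\varphi\in L^{p',\infty}$ via the pointwise bounds from Proposition~\ref{PROPdistrweightedlebesgue} is the analogue of the finiteness of $\Vert H_\alpha\varphi\Vert_{L^{p'}}$ that the paper uses implicitly.
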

\begin{proof}
	Let $f\in L^p$. Since $H_\alpha \varphi_t(u)=H_\alpha \varphi(tu)$, we have, by H\"older's inequality,
	\begin{align*}
	|\Phi_{H_\alpha f}(t)|=|\langle \varphi_t,H_\alpha f\rangle|\leq \Vert f\Vert_{L^p} \bigg(\int_0^\infty |H_\alpha \varphi(ut)|^{p'}\, du\bigg)^{1/p'} = t^{-1/p'}\Vert f\Vert_{L^p}\Vert H_\alpha \varphi\Vert_{L^{p'}}.
	\end{align*}
	Hence, $t^{1/p'}M\Phi_{H_\alpha f}(t)\leq \Vert f\Vert_{L^p}\Vert H_\alpha \varphi\Vert_{L^{p'}}$. In other words, the sublinear operator $T$ defined by $Tf=M\Phi_{H_\alpha f}$ is bounded from $L^p=L^{p,p}$ to $L^{p',\infty}$. Interpolating, we obtain the boundedness of the operator $T$ from $L^{p,q}$ to $L^{p',q}$ for any $0< q\leq \infty$ (see \cite[Theorem 26]{sagherinterpo}), i.e., \eqref{EQmaximal} holds.	
\end{proof}
\begin{corollary}
	Let $1<p<\infty$ and $1\leq q\leq \infty$. If $H_\alpha f\in L^{p',q}$ and $f$ is a $GM$ function, then $f\in L^{p,q}$.
\end{corollary}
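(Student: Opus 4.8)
The plan is to run the argument behind Theorem~\ref{THMmainlebesgue} in reverse on the Lorentz scale, just as Sagher and Booton do for the Fourier transform; as in Theorem~\ref{THMmainlorentz} we take $f$ to be real-valued. The engines are Lemma~\ref{LEMaverageoperator}, applied now to the \emph{transform} $H_\alpha f$ with the roles of $p$ and $p'$ interchanged, and Theorem~\ref{THMaverageoperator}. Before anything else I would record that the hypothesis ``$H_\alpha f\in L^{p',q}$'' presupposes that $H_\alpha f$ is well defined, and that for a $GM$ function this alone yields the endpoint regularity needed later: convergence at the origin of $\int_0^M f(x)\sqrt{xy}\,J_\alpha(xy)\,dx$ forces (via the $GM$ property) $\int_0^1|f(x)|x^{\alpha+1/2}\,dx<\infty$, whence by Lemma~\ref{LEMgmproperties}(ii) one gets $x^{\alpha+3/2}f(x)\to 0$ as $x\to 0$; and convergence at infinity, together with Lemma~\ref{LEMgmproperties}(iv) and the Remark after it, gives that $f$ vanishes at infinity. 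In particular $f$ induces an element of $\mathcal{H}_\alpha'$ (cf. Section~\ref{SECdefinitenessHa}), so Theorem~\ref{THMisomorphism} gives $H_\alpha(H_\alpha f)=f$ in $\mathcal{H}_\alpha'$.

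The first step is dualization. Fix any $\varphi\in\mathcal{H}_\alpha$ and set $\varphi_t(u)=t^{-1}\varphi(u/t)$. By the definition~\eqref{EQhankeldefdistribution} of the distributional Hankel transform (used twice) together with $H_\alpha\circ H_\alpha=\mathrm{id}$ we have, for every $t>0$,
\[
\Phi_{H_\alpha(H_\alpha f)}(t)=\langle H_\alpha f,\,H_\alpha\varphi_t\rangle=\langle f,\varphi_t\rangle=\Phi_f(t),
\]
where the middle term is an honest, absolutely convergent integral since $H_\alpha f\in L^{p',q}$ and $H_\alpha\varphi_t\in\mathcal{H}_\alpha\subset L^{p,q'}$ (H\"older on Lorentz spaces). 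Applying Lemma~\ref{LEMaverageoperator} with $(p,q)$ replaced by $(p',q)$ to the function $H_\alpha f\in L^{p',q}$ then yields
\[
\Vert M\Phi_f\Vert_{L^{p,q}}=\Vert M\Phi_{H_\alpha(H_\alpha f)}\Vert_{L^{p,q}}\lesssim \Vert H_\alpha f\Vert_{L^{p',q}}<\infty .
\]

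The second step recovers $f$. Now specialize $\varphi$ to the function supplied by the Remark following the proof of Theorem~\ref{THMmainlebesgue}: a $\varphi\in\mathcal{H}_\alpha$ with $0\le\varphi\le 1$, $\varphi\equiv 1$ on $(0,1)$ and $\varphi$ supported in $(0,1+\varepsilon/2)$, where $\varepsilon$ is the constant attached to $f$ in Theorem~\ref{THMaverageoperator}. Apply Theorem~\ref{THMaverageoperator} with the doubling weight $w(x)=x^{q/p-1}$; its hypotheses on $f$ are exactly real-valuedness, vanishing at infinity and $x^rf(x)\to 0$ for some $r>0$, all secured above (with $r=\alpha+3/2$). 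Since $M\Phi_f$ is nonincreasing, its $L^q_{t(p,q)}$ and $L^{p,q}$ norms coincide, so
\[
\Vert f\Vert_{L^q_{t(p,q)}}\lesssim \Vert M\Phi_f\Vert_{L^q_{t(p,q)}}=\Vert M\Phi_f\Vert_{L^{p,q}}\lesssim \Vert H_\alpha f\Vert_{L^{p',q}}<\infty .
\]
Hence $f\in L^q_{t(p,q)}$, and Theorem~\ref{THMbooton} (applicable since $1<p<\infty$, $1\le q\le\infty$ and $f\in GM$) upgrades this to $f\in L^{p,q}$ with $\Vert f\Vert_{L^{p,q}}\asymp\Vert f\Vert_{L^q_{t(p,q)}}$, which is the assertion.

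The chain of Lemma~\ref{LEMaverageoperator}, Theorem~\ref{THMaverageoperator} and Theorem~\ref{THMbooton} is the same as in the proof of Theorem~\ref{THMmainlebesgue}, so the only genuine obstacle is the first paragraph: one must make sense of $\Phi_f$ and verify the hypotheses of Theorem~\ref{THMaverageoperator} \emph{before} knowing that $f\in L^{p,q}$. This is precisely why it is essential to read $\Phi_f(t)$ through the transform side as $\langle H_\alpha f,H_\alpha\varphi_t\rangle$ — which converges outright — and to extract the decay of $f$ at $0$ and $\infty$ from the bare well-definedness of $H_\alpha f$ via the $GM$ structure; everything downstream is then formal.
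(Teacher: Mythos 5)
Your argument is correct and is essentially the paper's own proof: the paper likewise concludes by applying Lemma~\ref{LEMaverageoperator} (to $H_\alpha f$, with $p$ and $p'$ interchanged), then Theorem~\ref{THMaverageoperator} with the weight $x^{q/p-1}$, and finally Theorem~\ref{THMbooton}, so your opening paragraph only makes explicit the well-definedness and inversion bookkeeping that the paper leaves implicit. The one overreach is the claim that convergence of the Hankel integral at the origin forces $\int_0^1|f(x)|x^{\alpha+1/2}\,dx<\infty$ --- a real-valued $GM$ function can alternate sign on dyadic blocks so that this integral converges only conditionally --- but the consequence you actually use, $x^{\alpha+3/2}f(x)\to 0$ as $x\to 0$, can be extracted from the improper convergence itself (arguing as in the remark after Lemma~\ref{LEMgmproperties}), so this does not affect the argument.
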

\begin{proof}
	By Lemma~\ref{LEMaverageoperator} and Theorem~\ref{THMaverageoperator}, we get
	\[
	\Vert f\Vert_{L^{q}_{t(p,q)}}\lesssim \Vert H_\alpha f\Vert_{L^{p',q}}.
	\]
	Finally, Theorem~\ref{THMbooton} yields the desired result.
\end{proof}
We are now in a position to prove Theorem~\ref{THMmainlorentz}.
\begin{proof}[Proof of Theorem~\ref{THMmainlorentz}]
First of all, combining Theorem~\ref{THMlorentz} and Lemma~\ref{LEMaverageoperator} we obtain
\[
\Vert M\Phi_f\Vert_{L^q_{t(p,q)}}=\Vert M\Phi_f \Vert_{L^{p,q}}\lesssim \Vert H_\alpha f\Vert_{L^{p',q}}\lesssim \Vert f\Vert_{L^{p,q}},
\]
for $\varphi \in \mathcal{H}_\alpha$. Now, Theorem~\ref{THMaverageoperator} together with the appropriate choice of $\varphi$ yields $\Vert f\Vert_{L^q_{t(p,q)}}\lesssim \Vert M\Phi_{f}\Vert_{L^q_{t(p,q)}}$. Finally, Theorem~\ref{THMbooton} completes the proof.
\end{proof}

Putting together Theorems~\ref{THMmainlebesgue}, \ref{THMmainlorentz}, and \ref{THMbooton}, we can derive the following equivalence.
\begin{corollary}\label{CORhankel-transforms}
	Let $f\in GM$ be real-valued and let $1<p,q<\infty$. Then, for any $\alpha\geq -1/2$,
	\[
	f\in L^q_{t(p,q)}\Leftrightarrow H_\alpha f\in L^q_{t(p',q)}\Leftrightarrow f\in L^{p,q}\Leftrightarrow H_\alpha f\in L^{p',q}.
	\]
\end{corollary}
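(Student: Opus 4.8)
The plan is to deduce the four-term equivalence purely formally from the three results already proved, so the only real work is checking that the hypotheses of each apply for the parameter ranges in the statement. Fix a real-valued $f\in GM$, exponents $1<p,q<\infty$, and $\alpha\geq-1/2$.

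First I would record the compatibility of the parameter ranges. Since $\alpha\geq-1/2$ we have $\alpha+3/2\geq1$, hence $\frac{1}{\alpha+3/2}\leq1<p<\infty$; thus $(p,q)$ lies in the range $\frac{1}{\alpha+3/2}<p<\infty$, $1\leq q\leq\infty$ required by Theorem~\ref{THMmainlebesgue}, and trivially in the range $1<p<\infty$, $1\leq q\leq\infty$ required by Theorem~\ref{THMmainlorentz} and by Theorem~\ref{THMbooton}.

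Next I would assemble the chain. Theorem~\ref{THMmainlebesgue}, applied to $f$, gives the first equivalence $f\in L^q_{t(p,q)}\Leftrightarrow H_\alpha f\in L^q_{t(p',q)}$. Theorem~\ref{THMbooton}, applicable since $f\in GM$, gives $f\in L^q_{t(p,q)}\Leftrightarrow f\in L^{p,q}$; composing with the previous equivalence yields $H_\alpha f\in L^q_{t(p',q)}\Leftrightarrow f\in L^{p,q}$. Finally Theorem~\ref{THMmainlorentz}, applied to $f$, gives $f\in L^{p,q}\Leftrightarrow H_\alpha f\in L^{p',q}$. Concatenating the three two-sided implications produces exactly
\[
f\in L^q_{t(p,q)}\Leftrightarrow H_\alpha f\in L^q_{t(p',q)}\Leftrightarrow f\in L^{p,q}\Leftrightarrow H_\alpha f\in L^{p',q}.
\]

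There is no substantive obstacle here; the only point requiring care is the routing of the argument. One cannot link $H_\alpha f\in L^q_{t(p',q)}$ to $H_\alpha f\in L^{p',q}$ by a direct application of Theorem~\ref{THMbooton}, because $H_\alpha f$ need not be general monotone; the comparison must instead pass through $f$, which is $GM$ by hypothesis. Once this is observed, the statement follows immediately from the three cited theorems.
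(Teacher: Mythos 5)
Your argument is correct and is essentially the paper's own proof: the corollary is obtained exactly by combining Theorems~\ref{THMmainlebesgue}, \ref{THMmainlorentz}, and \ref{THMbooton}, with the routing through $f$ (not $H_\alpha f$) for the $GM$ hypothesis, and your parameter check $\frac{1}{\alpha+3/2}\leq 1<p$ is the right observation. Nothing further is needed.
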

\subsection{Boas' conjecture for the Fourier transform}

\subsubsection{One-dimensional Fourier transforms}Let $f$ be a function defined on $\R$. We denote 
\[
f_e(x)=\frac{f(x)+f(-x)}{2},\qquad f_o(x)=\frac{f(x)-f(-x)}{2},
\]
the even and odd part of $f$, respectively, so that $f=f_e+f_o$. Theorems~\ref{THMmainlebesgue} and~\ref{THMmainlorentz} together with \eqref{EQbesselfns1/2} and the well-known representation of the Fourier transform $\widehat{f}=H_{-1/2}f_e+iH_{1/2}f_o$ allow us to easily derive the solution to the Boas' conjecture for the Fourier transform, in the case of real-valued $GM$ functions. For the sake of completeness, we first prove a preliminary lemma.
\begin{lemma}\label{LEMMA-lebesgue-decomp}
	Let $f:\R\to \C$. Let $w:\R\to \R_+$ be an even weight and $0< q\leq \infty$. Then $f\in L^q_\R(w)$ if and only if $f_e,f_o\in L^q_\R(w)$.
\end{lemma}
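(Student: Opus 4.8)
The plan is to prove both directions of the equivalence by exploiting the triangle inequality together with the symmetry of the weight. The statement is essentially a soft functional-analytic fact: the decomposition $f = f_e + f_o$ is a bounded projection on $L^q_\R(w)$ whenever $w$ is even, and $f_e, f_o$ are recovered from $f$ by the formulas $f_e(x) = (f(x)+f(-x))/2$ and $f_o(x) = (f(x)-f(-x))/2$.

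First I would prove the ``if'' direction. Assuming $f_e, f_o \in L^q_\R(w)$, since $f = f_e + f_o$, the quasi-triangle inequality for $L^q_\R(w)$ (which holds with constant $1$ if $q \geq 1$ and with constant $2^{1/q-1}$ if $0 < q < 1$, and trivially for $q = \infty$) immediately gives $f \in L^q_\R(w)$ with $\|f\|_{L^q_\R(w)} \lesssim \|f_e\|_{L^q_\R(w)} + \|f_o\|_{L^q_\R(w)}$. This direction uses nothing about $w$ being even.

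For the ``only if'' direction, suppose $f \in L^q_\R(w)$. The key observation is that the reflection operator $R$ defined by $Rf(x) = f(-x)$ is an isometry on $L^q_\R(w)$ precisely because $w$ is even: indeed, after the substitution $x \mapsto -x$, one has $\int_\R w(x)|f(-x)|^q\,dx = \int_\R w(-x)|f(x)|^q\,dx = \int_\R w(x)|f(x)|^q\,dx$, and similarly for the $q = \infty$ case using $\sup_x w(x)|f(-x)| = \sup_x w(-x)|f(x)| = \sup_x w(x)|f(x)|$. Hence $Rf \in L^q_\R(w)$ with the same norm as $f$, and therefore $f_e = \tfrac12(f + Rf)$ and $f_o = \tfrac12(f - Rf)$ are both in $L^q_\R(w)$ by the quasi-triangle inequality again, with $\|f_e\|_{L^q_\R(w)}, \|f_o\|_{L^q_\R(w)} \lesssim \|f\|_{L^q_\R(w)}$.

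There is no real obstacle here; the only point requiring a modicum of care is the bookkeeping of quasi-norm constants when $0 < q < 1$ (outside the range $q \geq 1$ relevant to the main theorems, but the lemma is stated for $0 < q \leq \infty$) and handling the $q = \infty$ endpoint separately, where one works with suprema rather than integrals but the reflection argument is identical. I would simply remark that in all cases the implicit constants depend only on $q$, and write the two short computations above.
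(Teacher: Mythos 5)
Your argument is correct and is in substance the same as the paper's: both hinge on the evenness of $w$ making the reflection $x\mapsto -x$ norm-preserving, combined with (quasi-)triangle-type inequalities, the paper simply carrying out the reflection as an explicit change of variables over $(0,\infty)$ (via $f(\pm x)=f_e(x)\pm f_o(x)$ and $|a+b|^q\leq 2^q(|a|^q+|b|^q)$) instead of naming the operator $R$. Your packaging through the isometry $R$ with $f_e=\tfrac12(f+Rf)$, $f_o=\tfrac12(f-Rf)$ is a clean equivalent that treats $0<q<1$, $1\leq q<\infty$, and $q=\infty$ uniformly.
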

\begin{proof}
	The ``if'' part is trivial. For the ``only if'' part, we have, in the case $q<\infty$,
	\begin{align*}
	\int_{\R} w(x)|f(x)|^q\, dx=\int_{0}^\infty w(x) \big(|f_e(x)+f_o(x)|^q+|f_e(x)-f_o(x)|^q\, dx \big)\geq \int_0^\infty w(x)|f_e(x)|^q\, dx,
	\end{align*}
	where we used the inequality $|a+b|^q\leq 2^{q}(|a|^q+|b|^q)$. This shows that $f_e\in L^q_\R(w)$ and therefore also $f_o=f-f_e\in L^q_\R(w)$. For the case $q=\infty$, triangle inequality yields
	\[
	\sup_{x\in \R} w(x)|f(x)|\geq \frac{1}{2}\sup_{x\in (0,\infty)} w(x)|f_e(x)+f_o(x)|+\frac{1}{2}\sup_{x\in (0,\infty)}w(x)|f_e(x)-f_o(x)|\geq \sup_{x\in (0,\infty)} w(x)|f_e(x)|,
	\]
	and the result follows similarly as before.
\end{proof}
\begin{lemma}\label{LEMMA-lorentz-decomp}
	Let $f:\R\to \C$ and $0<p,q\leq \infty$. Then $f\in L^{p,q}_\R$ if and only if $f_e,f_o\in L^{p,q}_\R$.
\end{lemma}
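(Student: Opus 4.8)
The plan is to follow the same strategy as in Lemma~\ref{LEMMA-lebesgue-decomp}, replacing the elementary Lebesgue-norm inequalities used there by the corresponding subadditivity properties of the decreasing rearrangement. The ``if'' direction is immediate: from $f=f_e+f_o$ and the standard estimate $d_{u+v}(s_1+s_2)\le d_u(s_1)+d_v(s_2)$, which yields $(u+v)^*(x)\le u^*(x/2)+v^*(x/2)$, we get $f^*(x)\le f_e^*(x/2)+f_o^*(x/2)$ for all $x>0$; substituting this into the definition of $\Vert\cdot\Vert_{L^{p,q}_\R}$, changing variables $x\mapsto 2x$, and using the triangle inequality for the weighted $L^q(dx/x)$ quasi-norm (or the inequality $(a+b)^q\le a^q+b^q$ when $q<1$), one obtains $\Vert f\Vert_{L^{p,q}_\R}\lesssim \Vert f_e\Vert_{L^{p,q}_\R}+\Vert f_o\Vert_{L^{p,q}_\R}$, with an entirely analogous (in fact simpler) argument when $q=\infty$.

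For the ``only if'' direction, the key observation is the pointwise bound
\[
\max\{|f_e(x)|,|f_o(x)|\}\le \frac{|f(x)|+|f(-x)|}{2},\qquad x\in\R .
\]
Hence, for every $s>0$, if $|f_e(x)|>s$ then $|f(x)|>s$ or $|f(-x)|>s$, so that
\[
\{x\in\R:|f_e(x)|>s\}\subseteq\{x\in\R:|f(x)|>s\}\cup\{x\in\R:|f(-x)|>s\},
\]
and, since the reflection $x\mapsto -x$ preserves Lebesgue measure on $\R$, this gives $d_{f_e}(s)\le 2\,d_f(s)$; the same holds for $f_o$. From $d_{f_e}(s)\le 2\,d_f(s)$ one deduces $f_e^*(x)\le f^*(x/2)$ for all $x>0$, because $d_f(s)\le x/2$ implies $d_{f_e}(s)\le x$, and one takes the infimum over such $s$.

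It then remains to plug this bound into the norm. For $0<q<\infty$,
\[
\Vert f_e\Vert_{L^{p,q}_\R}^q=\int_0^\infty \big(x^{1/p}f_e^*(x)\big)^q\frac{dx}{x}\le\int_0^\infty\big(x^{1/p}f^*(x/2)\big)^q\frac{dx}{x}=2^{q/p}\Vert f\Vert_{L^{p,q}_\R}^q
\]
after the substitution $x\mapsto 2x$ (with the convention $1/p\equiv 0$ when $p=\infty$), while for $q=\infty$ the same substitution yields $\sup_{x>0} x^{1/p}f_e^*(x)\le 2^{1/p}\Vert f\Vert_{L^{p,\infty}_\R}$; repeating the argument verbatim with $f_o$ in place of $f_e$ finishes the proof. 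I do not expect a genuine obstacle here: the only point requiring a little care is that the decreasing rearrangement is computed with respect to Lebesgue measure on all of $\R$, which is precisely what makes the symmetry $d_{f(\cdot)}=d_{f(-\cdot)}$ available, and all the constants that appear are harmless powers of $2$.
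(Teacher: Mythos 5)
Your proof is correct and follows essentially the same route as the paper: both establish the key distribution-function bound $d_{f_e}(s)\lesssim d_f(s)$ (and likewise for $f_o$) by exploiting the reflection symmetry $x\mapsto -x$, and then pass to the Lorentz norm. The only cosmetic differences are that you convert this into the rearrangement inequality $f_e^*(x)\le f^*(x/2)$ and use the primary definition of $\Vert\cdot\Vert_{L^{p,q}_\R}$, whereas the paper invokes the alternative expression \eqref{EQlorentz-norm-alternative} of the norm in terms of the distribution function (and treats $f_o=f-f_e$ via the already-proved containment rather than repeating the symmetric argument), and that you spell out the ``if'' direction which the paper dismisses as trivial.
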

\begin{proof}
	Again, the ``if'' part is trivial. For the ``only if'' part, note that
	\[
	d_f(s)=\frac{1}{2}\big( |\{x\in \R:|f_e(x)+f_o(x)|>s\}|+|\{x\in \R:|f_e(x)-f_o(x)|>s\}|   \big).
	\]
	Since 
	\[
	\frac{1}{2} |\{x\in \R:|f_e(x)|>s\}|\leq |\{x\in \R:|f_e(x)+f_o(x)|>s\}|+|\{x\in \R:|f_e(x)-f_o(x)|>s\}|,
	\]
	or in other words, $d_{f_e}(s)\leq 4d_f(s)$, it follows that $f_e\in L^{p,q}_\R$ by \eqref{EQlorentz-norm-alternative}, and also $f_o=f-f_e\in L^{p,q}_\R$.
\end{proof}

We are in a position to prove Corollary~\ref{COR-fourier}, dealing with one-dimensional Fourier transforms.
\begin{proof}[Proof of Corollary~\ref{COR-fourier}]
	The result readily follows from the representation $\widehat{f}=H_{-1/2}f_e+iH_{1/2}f_o$, together with Corollary~\ref{CORhankel-transforms} and Lemmas~\ref{LEMMA-lebesgue-decomp}~and~\ref{LEMMA-lorentz-decomp}.
\end{proof}
The interval for $p$ in Corollary~\ref{COR-fourier} cannot be extended even for weighted Lebesgue spaces as done in Theorem~\ref{THMmainlebesgue}, where $\dfrac{1}{\alpha+3/2}<p<\infty$, because the even part of $\widehat{f}$ corresponds to the cosine transform, i.e., the Hankel transform of order $\alpha=-1/2$, and the optimal interval for the cosine transform is $1<p<\infty$, according to Theorem~\ref{THMmainlebesgue}.
\begin{proof}[Proof of Corollary~\ref{COR-fourier-radial}]
	The result follows by using the relation \eqref{EQ-fourier-radial-functions}, and by Theorem~\ref{THMmainlebesgue} with $\alpha=\dfrac{n}{2}-1$, and $\dfrac{1}{p}=\gamma-\dfrac{n-1}{2}+\dfrac{n}{q}$.
\end{proof}


\begin{thebibliography}{11}

	\bibitem{BH}
	J. J. Benedetto and H. P. Heinig,
	\newblock \textit{Weighted Fourier Inequalities: New Proofs and Generalizations},		
	\newblock J. Fourier Anal. Appl. \textbf{9} (2003), 1--37.
	
	\bibitem{BSbook}
	C. Bennett and R. Sharpley,
	\newblock \textit{Interpolation of Operators},
	\newblock Academic Press, Inc., Boston, 1988.

	\bibitem{boas}
	R. P. Boas Jr.,
	\newblock \textit{The integrability class of the sine transform of a monotonic function},
	\newblock Studia Math. \textbf{44} (1972), 365--369.

	\bibitem{booton}
	B. Booton,
	\newblock \textit{General monotone functions and their {F}ourier coefficients},
	\newblock J. Math. Anal. Appl. \textbf{426} (2) (2015), 805--823.
	
	\bibitem{bootonlorentz}
	B. Booton,
	\newblock \textit{Rearrangements of general monotone functions and of their Fourier transforms},
	\newblock Math. Inequal. Appl. \textbf{21} (3) (2018), 871--883.
	
	\bibitem{calderon}
	A. P. Calder\'on,
	\newblock \textit{Spaces between {$L^{1}$} and {$L^{\infty }$} and the
		theorem of {M}arcinkiewicz},
	\newblock Studia Math. \textbf{26} (1966), 273--299.

	\bibitem{DC} L. De Carli,
	\newblock \emph{On the {$L^p$}-{$L^q$} norm of the {H}ankel transform
		and related operators},
	\newblock J. Math. Anal. Appl. \textbf{348} (2008),  366--382.
%
	\bibitem{DCGT}
	L. De Carli, D. Gorbachev and S. Tikhonov,
	\newblock \emph{Pitt and {B}oas inequalities for {F}ourier and {H}ankel
		transforms},
	\newblock J. Math. Anal. Appl. \textbf{408} (2) (2013), 762--774.

	\bibitem{miohankelgm}
	A. Debernardi, 
	\newblock \textit{Hankel transforms of general monotone functions},
	\newblock to appear. in: Topics in Classical and Modern Analysis: in memory of Yingkang Hu, Applied and Numerical Harmonic Analysis series, Birkh\"auser/Springer, Basel, 2019.
		
	\bibitem{miohankel}
	A. Debernardi, 
	\newblock \textit{Uniform convergence of Hankel transforms}, 
	\newblock J. Math. Anal. Appl. \textbf{468} (2) (2018), 1179--1206.
	
	\bibitem{miowni}
	A. Debernardi,
	\newblock \textit{Weighted norm inequalities for generalized Fourier-type transforms and applications},
	accepted to the journal Publicacions Matem\`atiques.

	\bibitem{askhat}
	M. Dyachenko, A. Mukanov, and S. Tikhonov,
	\newblock \textit{Hardy-Littlewood theorems for trigonometric series with general monotone coefficients},
	\newblock to appear in Studia Mathematica.
	
	\bibitem{studia}
	M.~Dyachenko and S.~Tikhonov,
	\newblock \textit{Integrability and continuity of functions represented by trigonometric series: coefficients criteria},
	\newblock Studia Math. \textbf{193} (3) (2009), 285--306.

	\bibitem{DTgmlipschitz}
	M.~Dyachenko and S.~Tikhonov,
	\newblock \textit{Smoothness properties of functions with general monotone Fourier coefficients},
	\newblock J. Fourier Anal. Appl. \textbf{24} (4) (2018), 1072--1097.

	\bibitem{EMOT}
	A. Erd{\'e}lyi, W. Magnus, F. Oberhettinger and F. G. Tricomi,
	\newblock \emph{Higher Transcendental Functions. {V}ol. {II}},
	\newblock McGraw-Hill, New York, 1953.
	
	\bibitem{GLT}
	D. Gorbachev, E. Liflyand and S. Tikhonov,
	\newblock \textit{Weighted {F}ourier inequalities: {B}oas' conjecture in {$\Bbb
			R^n$}},
	\newblock J. Anal. Math. \textbf{114} (2011), 99--120.
	
	\bibitem{GLTPitt} D. Gorbachev, E. Liflyand, and S. Tikhonov,
	\newblock \textit{Weighted norm inequalities for integral transforms},
	\newblock Indiana Univ. Math. J.  \textbf{67} (2018), 1949--2003.

	\bibitem{GT} D. Gorbachev and S. Tikhonov,
	\newblock \textit{Moduli of smoothness and growth properties of Fourier transforms: two-sided estimates},
	\newblock J. Approx. Theory \textbf{164} (9), 1283--1312.

	\bibitem{Grafakosbook}
	L. Grafakos,
	\newblock \textit{Classical and Modern Fourier Analysis},
	\newblock Pearson Education, Inc., Upper Saddle River, NJ, 2004.
	
	\bibitem{HaLi-Lp}
	G. H. Hardy and J. E. Littlewood,
	\newblock \textit{Notes on the Theory of Series (XIII): Some New Properties of Fourier Constants}
	\newblock J. London Math. Soc. \textbf{S1-6} (1931), 3--9.
	
	\bibitem{heinigWeighted}
	H. P. Heinig,
	\newblock \textit{Weighted norm inequalities for classes of operators},
	\newblock Indiana Univ. Math. J. \textbf{33} (4) (1984), 573--582.
		
	\bibitem{IL}
	A. Iosevich and E. Liflyand,
	\newblock \textit{Decay of the Fourier Transform: Analytic and Geometric Aspects},
	\newblock Birkh\"{a}user/Springer, Basel, 2014.
	
	\bibitem{LTnach}
	E. Liflyand and S. Tikhonov,
	\newblock \emph{A concept of general monotonicity and applications},
	\newblock Math. Nachr. \textbf{284} (8-9) (2011), 1083--1098.
	
	\bibitem{LTparis}
	E. Liflyand and S. Tikhonov,
	\newblock \emph{Extended solution of {B}oas' conjecture on {F}ourier transforms},
	\newblock C. R. Math. Acad. Sci. Paris \textbf{346} (21-22) (2008), 1137--1142.
	
	\bibitem{LTannali}
	E. Liflyand and S. Tikhonov,
	\newblock \textit{Two-sided weighted Fourier inequalities},
	\newblock Ann. Sc. Norm. Super. Pisa Cl. Sci. (5) \textbf{11} (2) (2012), 341--362. 


	\bibitem{LTrebels}
	E. Liflyand and W. Trebels, 
	\newblock \textit{On asymptotics for a class of radial Fourier transforms}, Z. Anal. Anwendungen \textbf{17} (1998), 103--114.	
	
	\bibitem{Lorentzspaces}
	G. G. Lorentz,
	\newblock \textit{On the theory of spaces {$\Lambda$}},
	\newblock Pacific J. Math. \textbf{1} (1951), 411--429.
	
	\bibitem{macaulay}
	P. Macaulay-Owen,
	\newblock \textit{Parseval's theorem for {H}ankel transforms},
	\newblock Proc. London Math. Soc. \textbf{45} (1939), 458--474.

	\bibitem{mizohata}
	S. Mizohata,
	\newblock \textit{The Theory of Partial Differential Equations}, 
	\newblock Cambridge Univ. Press, New York, 1973.
	
	\bibitem{NursultanovNetspaces}
	E. Nursultanov,
	\newblock \textit{Net spaces and inequalities of Hardy-Littlewood type},
	\newblock Sb. Math. \textbf{189} (3) (1998), 399--419; Translation from Mat. Sb. \textbf{180} (3) (1998), 83--102.
	
	\bibitem{NT}
	E. Nursultanov and S. Tikhonov,
	\newblock \textit{Net spaces and boundedness of integral operators},
	\newblock J. Geom. Anal. \textbf{21} (4) (2011), 950--981.

	\bibitem{Pitt}
	H. R. Pitt,
	\newblock \textit{Theorems on {F}ourier series and power series},
	\newblock Duke Math. J. \textbf{3} (1937), 747--755.
	
	\bibitem{sagherQM}
	Y. Sagher,
	\newblock \textit{An application of interpolation theory to {F}ourier series},
	\newblock Studia Math. \textbf{41} (1972), 169--181.

	\bibitem{sagherboas}
	Y. Sagher,
	\newblock \textit{Integrability conditions for the {F}ourier transform},
	\newblock J. Math. Anal. Appl. \textbf{54} (1976), 151--156.
	
	\bibitem{sagherinterpo}
	Y. Sagher,
	\newblock \textit{Some remarks on interpolation of operators and {F}ourier
		coefficients},
	\newblock Studia Math. \textbf{44} (1972), 239--252.

	\bibitem{schwartzdistr}
	L. Schwartz,
	\newblock \textit{Th\'eorie des Distributions},
	\newblock Hermann, Paris, 1966.
	
	\bibitem{SWinterpolation}
	E. M. Stein and G. Weiss,
	\newblock \textit{Interpolation of operators with change of measures},
	\newblock Trans. Amer. Math. Soc. \textbf{87} (1958), 159--172.
	
	\bibitem{SWfourier}
	E. M. Stein and G. Weiss,
	\newblock \textit{Introduction to {F}ourier Analysis on {E}uclidean Spaces},
	\newblock Princeton University Press, Princeton, N.J., 1971.
	
	\bibitem{TikSzeged}
	S. Tikhonov,
	\newblock \textit{Embedding results in questions of strong approximation by {F}ourier series},
	\newblock Acta Sci. Math. (Szeged) \textbf{72} (1--2) (2006), 117--128; published first as S. Tikhonov, \textit{Embedding theorems of function classes, IV}. November 2005, CRM preprint.
	
	\bibitem{TikGM}
	S. Tikhonov,
	\newblock \textit{Trigonometric series with general monotone coefficients},
	J. Math. Anal. Appl. \textbf{326} (2007), 721--735.

	\bibitem{titchmarsh}
	E. C. Titchmarsh,
	\newblock \textit{Introduction to the Theory of Fourier Integrals},
	\newblock Clarendon Press, Oxford, 1937.
	
	\bibitem{YZZ}
	D. S. Yu, P. Zhou, and  S. P. Zhou,
	\newblock \textit{On {$L^p$} integrability and convergence of trigonometric series},
	\newblock Studia Math. \textbf{182} (3) (2007), 215--226.
		
	\bibitem{hankeldistrib}
	A. H. Zemanian,
	\newblock \textit{A distributional {H}ankel transformation},
	\newblock SIAM J. Appl. Math. \textbf{14} (1966), 561--576.
	
	\bibitem{zemanianbook}
	A. H. Zemanian,
	\newblock \textit{Distribution Theory and Transform Analysis},
	\newblock Second edition, Dover Publications, Inc., New York, 1987.
		
	\bibitem{zygmund}
	A. Zygmund, \emph{Trigonometric Series: {V}ol. {I}, {II}.}
	\newblock Third edition. With a foreword by Robert A. Fefferman, Cambridge University Press, Cambridge, 2002.
	\end{thebibliography}
	\end{document}